\newtheorem{thm}{Theorem}[section]
\newtheorem{cor}[thm]{Corollary}
\newtheorem{lem}[thm]{Lemma}
\newtheorem{prop}[thm]{Proposition}
\newtheorem{rem}[thm]{Remark}
\theoremstyle{definition}
\numberwithin{equation}{section}
\newcommand{\C}{\mathbb{C}}
\newcommand{\N}{\mathbb{N}}
\newcommand{\R}{\mathbb{R}}
\def\hat{\widehat}
\def\tilde{\widetilde}
\def \bfo {\begin {eqnarray*} }
\def \efo {\end {eqnarray*} }
\def \ba {\begin {eqnarray*} }
\def \ea {\end {eqnarray*} }
\def \beq {\begin {eqnarray}}
\def \eeq {\end {eqnarray}}
\def \p {\partial}
\def\hat{\widehat}
\def\tilde{\widetilde}
\def \bfo {\begin {eqnarray*} }
\def \efo {\end {eqnarray*} }
\def \ba {\begin {eqnarray*} }
\def \ea {\end {eqnarray*} }
\def \beq {\begin {eqnarray}}
\def \eeq {\end {eqnarray}}
\def \p {\partial}
\begin{document}

\title[Inverse problems for nonlinear biharmonic operators]{Inverse problems for third--order nonlinear perturbations of  biharmonic operators}

\author[Bhattacharyya]{Sombuddha Bhattacharyya}

\address
       {S. Bhattacharyya, Department of Mathematics\\
       Indian Institute of Science Education and Research\\ 
       Bhopal, India (IISERB)}
\email{sombuddha@iiserb.ac.in}

\author[Krupchyk]{Katya Krupchyk}
\address
        {K. Krupchyk, Department of Mathematics\\
University of California, Irvine\\
CA 92697-3875, USA }

\email{katya.krupchyk@uci.edu}

\author[Kumar Sahoo]{Suman Kumar Sahoo}

\address
       {S. K. Sahoo, Seminar for Applied Mathematics\\
        Department of Mathematics\\ 
        ETH Z\"urich, Switzerland}
\email{susahoo@ethz.ch}

\author[Uhlmann]{Gunther Uhlmann}

\address
       {G. Uhlmann, Department of Mathematics\\
       University of Washington\\
       Seattle, WA  98195-4350\\
       USA\\
        and Institute for Advanced Study of the Hong Kong University of Science and Technology}
\email{gunther@math.washington.edu}

\maketitle

\begin{abstract}
We study inverse boundary problems for third--order nonlinear tensorial perturbations of biharmonic operators on a bounded domain in $\mathbb{R}^n$, where $n\geq 3$. By imposing appropriate assumptions on the nonlinearity, we demonstrate that the Dirichlet--to--Neumann map, known on the boundary of the domain, uniquely determines the genuinely nonlinear tensorial third-order perturbations of the biharmonic operator. The proof relies on the inversion of certain generalized momentum ray transforms on symmetric tensor fields. 
Notably, the corresponding inverse boundary problem for linear tensorial third-order perturbations of the biharmonic operator remains an open question.
\end{abstract}

\section{Introduction and statement of results}
Let $\Omega\subset \R^n$, $n\ge 3$, be a bounded open set with $C^\infty$ boundary. Consider the semilinear partial differential operator, 
\begin{equation}
\label{int_eq_1}
\begin{aligned}
L_{A^{(1)}, A^{(2)}, A^{(3)},q}u:= (-\Delta)^2u+ \sum_{i_1,i_2, i_3=1}^n A^{(3)}_{i_1 i_2 i_3}(x,u)D^3_{i_1i_2 i_3}u +\sum_{i_1,i_2=1}^n A^{(2)}_{i_1i_2}(x,u)D^2_{i_1i_2}u\\
+\sum_{i_1=1}^n A^{(1)}_{i_1}(x,u)D_{i_1}u+q(x,u),
\end{aligned}
\end{equation}
where $u\in C^\infty(\overline{\Omega})$ and $D^j_{i_1\dots i_j}=\frac{1}{i^j}\frac{\p^j}{\p_{x_{i_1}}\dots \p_{x_{i_j}}}$, $i_1, \dots, i_j\in \{1,2,\dots, n\}$, $j=1,2,3$. We let $S^j:=S^j(\R^n)$ stand for the space of symmetric $j$--tensors on $\R^n$. Let $0<\alpha<1$, and let $C^{0,\alpha}(\overline{\Omega}; S^j)$ stand for the space of H\"older continuous symmetric $j$--tensor fields, $j=0,1,2,3$. We assume that 
$A^{(j)}: \overline{\Omega}\times \C\to S^j$, $j=1,2,3$, and $q:\overline{\Omega}\times\C\to \C$ satisfy the following conditions, 
\begin{itemize}
\item[(i)] the map $\C\ni z\mapsto A^{(j)}(\cdot,z)$ is holomorphic with values in $C^{0,\alpha}(\overline{\Omega};  S^j)$ for some $0<\alpha<1$,

\item[(ii)] the map $\C\ni z\mapsto q(\cdot,z)$ is holomorphic with values in $C^{0,\alpha}(\overline{\Omega})$,

\item[(iii)] $A^{(j)}(x,0)=0$, $q(x,0)=0$, and $\p_z q(x,0)=0$ for all $x\in \overline{\Omega}$, $j=1,2,3$. 
\end{itemize}

It follows from (i), (ii), (iii) that $A^{(j)}$, $j=1,2,3$, and $q$ can be expanded into power series 
\begin{equation}
\label{int_eq_2}
A^{(j)}(x,z)=\sum_{k=1}^\infty A^{(j),k}(x)\frac{z^k}{k!}, \quad A^{(j),k}(x):=\p_z^kA^{(j)}(x,0)\in C^{0,\alpha}(\overline{\Omega};  S^j),
\end{equation}
converging in the $C^{0,\alpha}(\overline{\Omega};  S^j)$ topology, and 
\begin{equation}
\label{int_eq_3}
q(x,z)=\sum_{k=2}^\infty q^{k}(x)\frac{z^k}{k!}, \quad q^{k}(x):=\p_z^kq(x,0)\in C^{0,\alpha}(\overline{\Omega}),
\end{equation}
converging in the $C^{0,\alpha}(\overline{\Omega})$ topology. 

Consider the Dirichlet problem for the operator $L_{A^{(1)}, A^{(2)}, A^{(3)},q}$, 
\begin{equation}
\label{int_eq_4}
\begin{cases}
L_{A^{(1)}, A^{(2)}, A^{(3)},q}u=0\quad \text{in}\quad \Omega,\\
u|_{\p \Omega}=f,\\
\p_\nu u|_{\p \Omega}=g,
\end{cases}
\end{equation}
where $\nu$ is the unit outer normal to the boundary $\p \Omega$. It is shown in Theorem \ref{thm_well-posedness} below that under the assumptions (i), (ii), (iii), there exist $\delta>0$ and $C>0$ such that when $(f,g)\in B_\delta(\p \Omega):=\{(f,g)\in C^{4,\alpha}(\p \Omega)\times C^{3,\alpha}(\p \Omega): \|f\|_{C^{4,\alpha}(\p \Omega)}+\|g\|_{C^{3,\alpha}(\p \Omega)}<\delta\}$, the problem \eqref{int_eq_4} has a unique solution $u=u_{f,g}\in C^{4,\alpha}(\overline{\Omega})$ satisfying $\|u\|_{C^{4,\alpha}(\overline{\Omega})}<C\delta$.  Associated to the problem \eqref{int_eq_4}, we define the Dirichlet--to--Neumann map
\begin{equation}
\label{int_eq_5}
\Lambda_{A^{(1)}, A^{(2)}, A^{(3)},q}(f,g)= (\p^2_\nu u |_{\p \Omega}, \p^3_\nu u|_{\p \Omega}), 
\end{equation}
where $(f,g)\in B_\delta(\p \Omega)$ and $u=u_{f,g}$. The inverse problem that we are interested in is whether the knowledge of the Dirichlet--to--Neumann map $\Lambda_{A^{(1)}, A^{(2)}, A^{(3)},q}$ determines the nonlinear tensorial potentials $A^{(1)}$, $A^{(2)}$, $A^{(3)}$ and $q$ in the operator $L_{A^{(1)}, A^{(2)}, A^{(3)},q}$ given by \eqref{int_eq_1}, uniquely. 

Our main result is as follows. 
\begin{thm}
\label{thm_main}
Let $\Omega\subset \R^n$, $n\ge 3$, be a bounded open set with $C^\infty$ boundary. Let $A^{(j)},\tilde A^{(j)}: \overline{\Omega}\times \C\to S^j$, $j=1,2,3$, and $q, \tilde q:\overline{\Omega}\times\C\to \C$ satisfy the conditions (i), (ii), (iii). Assume furthermore that 
\begin{equation}
\label{eq_int_boundary_determination}
\p_z^kA^{(j)}(\cdot,0)|_{\p \Omega}= \p_z^k\tilde A^{(j)}(\cdot,0)|_{\p \Omega}, \quad  k=1,2,\dots, \quad j=1,2,3. 
\end{equation}
If $\Lambda_{A^{(1)},A^{(2)},A^{(3)},q}=\Lambda_{\tilde A^{(1)},\tilde A^{(2)}, \tilde A^{ (3)}, \tilde q}$ then $A^{ (j)}=\tilde A^{ (j)}$ in $\overline{\Omega}\times\C$, $j=1,2,3$, and $q=\tilde q$ in $\overline{\Omega}\times\C$. 
\end{thm}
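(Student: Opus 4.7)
The plan is to use the standard higher-order linearization scheme for nonlinear inverse problems, reducing the question to the inversion of certain generalized momentum ray transforms of tensor fields tested against products of biharmonic solutions.

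First I would use the well-posedness in Theorem~\ref{thm_well-posedness} to insert boundary data of the form $(f,g)=\sum_{l=1}^{N}\varepsilon_l(f_l,g_l)$ with $\varepsilon\in\C^N$ small; the resulting solution $u(\cdot;\varepsilon)$ depends holomorphically on $\varepsilon$, and differentiating in the $\varepsilon_l$'s at $\varepsilon=0$ produces a hierarchy of linear fourth-order problems. By assumption (iii), the first linearizations $v_l:=\p_{\varepsilon_l}u|_{\varepsilon=0}$ are purely biharmonic, $(-\Delta)^2 v_l=0$, with $(v_l,\p_\nu v_l)=(f_l,g_l)$ on $\p\Omega$. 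Combining the equality $\Lambda=\tilde\Lambda$ with an Alessandrini-type integration by parts (self-adjointness of $(-\Delta)^2$ plus \eqref{eq_int_boundary_determination} to annihilate the boundary contributions of orders $0,1,2,3$), the $k$-th mixed derivative yields an integral identity of the schematic form
\begin{equation*}
\int_\Omega\Bigl\{\sum_{j=1}^{3}\bigl(A^{(j),k-1}-\tilde A^{(j),k-1}\bigr)_{i_1\cdots i_j}T^{(j)}_{i_1\cdots i_j}(v_1,\ldots,v_k)+(q^k-\tilde q^k)\,v_1\cdots v_k\Bigr\}v_0\,dx=0,
\end{equation*}
valid for arbitrary biharmonic $v_0,v_1,\ldots,v_k$ in $\Omega$; here $T^{(j)}$ is a symmetric multilinear expression whose leading term is $\sum_l(v_1\cdots\widehat{v_l}\cdots v_k)D^j v_l$, and Taylor coefficients of order lower than $k-1$ are to be treated by an outer induction on $k$.

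The analytic heart of the proof is then to test this identity against well-chosen biharmonic CGOs. Besides the standard exponential solutions $v=e^{x\cdot\zeta/h}(a+hr_h)$ with $\zeta\cdot\zeta=0$, the biharmonic operator admits richer CGOs with polynomial prefactors, $v=(x\cdot b+\cdots)e^{x\cdot\zeta/h}(a+hr_h)$, which are unavailable for the Laplacian and are precisely what let us isolate third-order tensor contributions. Plugging in $(k+1)$ such solutions with carefully matched phases, passing to the semiclassical limit $h\to 0$, and organising the stationary-phase expansions along a chosen line direction reduces the identity to the vanishing of a generalized momentum ray transform of an explicit symmetric combination built from $A^{(j),k-1}-\tilde A^{(j),k-1}$ (and $q^k-\tilde q^k$). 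Inverting this ray transform---which is the principal new analytic ingredient of the paper---and invoking \eqref{eq_int_boundary_determination} to eliminate boundary obstructions, one obtains $A^{(j),k-1}=\tilde A^{(j),k-1}$ for $j=1,2,3$ and $q^k=\tilde q^k$; induction on $k$ then identifies all Taylor coefficients and hence $A^{(j)}$ and $q$ on $\overline{\Omega}\times\C$.

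The hard part will be precisely this reduction-and-inversion step. The three tensor orders $j=1,2,3$ enter the integral identity simultaneously and must be disentangled by a judicious choice of CGO phases and polynomial amplitudes, after which a genuine momentum ray transform on symmetric $3$-tensors has to be inverted. This is strictly more than is known in the purely linear setting, which is exactly why---as advertised in the abstract---the linear analogue of Theorem~\ref{thm_main} remains open: the multilinear freedom afforded by the nonlinearity provides as many independent biharmonic test solutions as we wish to insert into the identity, and it is this freedom that makes the ray transform inversion tractable.
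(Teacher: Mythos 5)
Your proposal follows essentially the same route as the paper: higher-order linearization in the small boundary parameters, induction on the order of the Taylor coefficients, Green's formula to produce the multilinear integral identity for biharmonic test functions, and then biharmonic CGOs with polyanalytic (polynomial-type) amplitudes feeding into injectivity results for generalized momentum ray transforms. The paper simply sets $v^{(3)}=\dots=v^{(m)}=1$ so that every induction step reduces to the fixed trilinear identity of Theorem~\ref{thm_density}, whose separate proof carries out your deferred reduction-and-inversion step (with $A^{(3)}$ and $A^{(2)}$ recovered in a coupled way via Lemmas~\ref{lem_inversion_1} and \ref{lem_inversion_2}); note also that the boundary terms in the integration by parts vanish because all four traces of $w_1-w_2$ vanish, while \eqref{eq_int_boundary_determination} is needed only so that the coefficient differences extend by zero to $C_0(\R^n;S^j)$, as required by the ray-transform lemmas.
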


\begin{rem}
To the best of our knowledge, the inverse boundary problem of recovering linear tensorial third-order perturbations of the biharmonic operator remains an open question. Furthermore, the uniqueness of such recovery is not possible due to the gauge invariance of the problem, as explained in \cite{Bhattacharyya_Ghosh_2021, Sahoo_Salo_2023}.  
We refer to \cite{Sahoo_Salo_2023} for the study of the linearized inverse problems for tensorial third-order perturbations of the biharmonic operator, where third-order tensorial perturbations are determined up to a natural gauge transformation. Theorem \ref{thm_main} can be regarded as the first result establishing the unique recovery of third-order nonlinear tensorial perturbations of biharmonic operators. 
\end{rem}

\begin{rem}
The assumption \eqref{eq_int_boundary_determination} in Theorem \ref{thm_main} can be removed by performing boundary determination, for instance as in \cite[Appendix C]{Krup_Uhlmann_magnetic}.
\end{rem}

Perturbed biharmonic operators naturally arise in various areas of physics and geometry, such as the study of the Kirchhoff plate equation in elasticity theory and the investigation of the Paneitz-Branson operator in conformal geometry, see \cite{Gazzola_Grunau_Sweers_book_2010, Selvadurai_book_2000}.  The inverse boundary problems concerning the recovery of zeroth-order linear perturbations of biharmonic operators have been studied in  \cite{Ikehata_1991, Isakov_1991},  see also \cite{Krupchyk_Uhlmanna_2016}. Furthermore, the investigations of inverse boundary problems for determining first-order linear perturbations of biharmonic operators, as well as more general polyharmonic operators, can be found in  \cite{Krup_Lassas_Uhlmann_2012, Krup_Lassas_Uhlmann_2014}. 

Further advancements have been made in the field of inverse boundary problems for linear perturbations of biharmonic and polyharmonic operators in \cite{Assylbekov_2016, Assylbekov_2017, Assylbekov_Iyer_2019,  Assylbekov_Yang_2017, Bhattacharyya_Ghosh_2019, Bhattacharyya_Ghosh_2021, Bhattacharyya_Krishnan_Sahoo_2021, Brown_Gauthier_2022, Choudhury_Krishnan_2015, Ghosh_Krishnan_2016, Liu_2020, Liu_2023,  Yan_2021, Yan_2023, Bhattacharyya_Kumar_2023, Agrawal_Jaiswal_Sahoo_2023, Aroua_Bellassoued_2022, Aroua_Bellassoued_2023}, see also  \cite{Bansal_Krishnan_Pattar_2023} for the two-dimensional case.  However, it should be noted that when considering linear perturbations of biharmonic operators specifically, all of these results are limited to determining up to second-order perturbations. The reason for this limitation lies in the construction of complex geometric optics solutions for the perturbed biharmonic operators, which are essential for solving the corresponding inverse problems. This construction relies on Carleman estimates with limiting Carleman weights. To obtain Carleman estimates for the perturbed semiclassical biharmonic operator, an iterative procedure is employed, involving the iteration of Carleman estimates with limiting Carleman weights for the semiclassical Laplacian and perturbing them with lower-order terms. However, it should be noted that the Carleman estimates with limiting Carleman weights for the semiclassical Laplacian suffer from a semiclassical loss, which is manifested by the presence of the factor '$h$' in the estimates, as illustrated in equation \eqref{eq_3_2} below. Consequently, the resulting Carleman estimates for the semiclassical biharmonic operator, as depicted in equation \eqref{eq_3_3} below, lack the necessary strength to accommodate third-order linear perturbations of the biharmonic operator.

Now the work \cite{Kurylev_Lassas_Uhlmann_2018} shows that nonlinearity might help when solving inverse problems for hyperbolic PDE. Furthermore, similar phenomena have been observed and utilized in the context of nonlinear elliptic equations, as demonstrated in \cite{Feizmohammadi_Oksanen_2020, Lassas_Liimatainen_Lin_Salo_2021}.  We refer to \cite{Kian_Krup_Uhlmann_2023, Krup_Uhlmann_2020,  Krup_Uhlmann_2020_remark, Krup_Uhlmann_magnetic, Krup_Uhlmann_Yan_magnetic, Lai_Ting, Mun_Uhl_2018, Lassas_Liimatainen_Lin_Salo_2021_partial, Liimatainen_Lin_Salo_Tyni_2022, Liimatainen_Lin_preprint_2022, Salo_Tzou_2023} for extensive recent work on inverse boundary problems for nonlinear elliptic PDE.  Notably, these works highlight that the presence of nonlinearity allows for solving inverse problems for nonlinear PDE, even when the corresponding inverse problems for linear equations remain unsolved. Our present paper aims to further illustrate and explore this intriguing phenomenon.

Let us now proceed to discuss the main ideas behind the proof of Theorem \ref{thm_main}. By employing the technique of higher-order linearizations of the Dirichlet--to--Neumann map, initially introduced in \cite{Kurylev_Lassas_Uhlmann_2018} (see also \cite{Feizmohammadi_Oksanen_2020, Lassas_Liimatainen_Lin_Salo_2021}), along with previous works on the second linearization \cite{Sun_1996, Sun_Uhlmann_1997}, the proof of Theorem \ref{thm_main} can be reduced to establishing the following density result. 
\begin{thm}
\label{thm_density}
Let $\Omega\subset \R^n$, $n\ge 3$, be a bounded open set with $C^\infty$ boundary. Let $A^{(j)}\in C(\overline{\Omega};  S^j)$ be such that 
\begin{equation}
\label{eq_int_boundary_determination_2}
A^{(j)}|_{\p \Omega}=0, 
\end{equation}
 $j=1,2,3$, and  $q\in L^\infty(\Omega)$. If  
\begin{equation}
\label{int_eq_6}
\begin{aligned}
\sum_{j=1}^3 \sum_{i_1,\dots, i_j=1}^n \int_{\Omega} A^{(j)}_{i_1\dots i_j}(x)\big( v^{(1)} D^j_{i_1\dots i_j}v^{(2)}+ v^{(2)} D^j_{i_1\dots i_j}v^{(1)}\big) v^{(0)}dx \\
+ \int_{\Omega} q(x)v^{(1)}v^{(2)}v^{(0)}dx=0,
\end{aligned}
\end{equation}
for all $v^{(l)}\in C^{4,\alpha}(\overline{\Omega})$ solving $(-\Delta)^2v^{(l)}=0$ in $\Omega$, $l=0,1,2$, 
then  $A^{(j)}=0$ and $q=0$ in $\Omega$, $j=1,2,3$.
\end{thm}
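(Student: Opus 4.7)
The plan is to reduce the integral identity \eqref{int_eq_6} to a Fourier-side relation by testing against biharmonic complex exponentials, and then to extract pointwise information on each tensor $A^{(j)}$ and on $q$ by varying the parameters of those exponentials, ultimately reducing to the injectivity of a (generalized) momentum ray transform on symmetric tensor fields. First, for any $\zeta \in \C^n$ with $\zeta\cdot\zeta = 0$ the function $e^{x\cdot\zeta}$ is harmonic, hence biharmonic on $\Omega$, and its restriction to $\overline{\Omega}$ is smooth. I fix $\xi \in \R^n$, choose $\zeta_0, \zeta_1, \zeta_2 \in \C^n$ with each $\zeta_l\cdot\zeta_l = 0$ and $\zeta_0 + \zeta_1 + \zeta_2 = -i\xi$, and plug $v^{(l)}(x) = e^{x\cdot\zeta_l}$ into \eqref{int_eq_6}. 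Extending $A^{(j)}, q$ by zero to $\R^n$, which is legitimate by \eqref{eq_int_boundary_determination_2}, and using $D^j_{i_1\cdots i_j} e^{x\cdot\zeta_l} = i^{-j}\zeta_{l,i_1}\cdots\zeta_{l,i_j}\,e^{x\cdot\zeta_l}$, the identity becomes
\[
\sum_{j=1}^{3} \frac{1}{i^{j}}\,\widehat{\langle A^{(j)},\,\zeta_1^{\otimes j}+\zeta_2^{\otimes j}\rangle}(\xi) + \widehat{q}(\xi)=0,
\]
where $\langle A^{(j)}, w^{\otimes j}\rangle := \sum_{i_1,\dots,i_j}A^{(j)}_{i_1\cdots i_j}w_{i_1}\cdots w_{i_j}$ and $\widehat{\cdot}$ denotes Fourier transform on $\R^n$.

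Second, to separate the contributions of $A^{(3)}, A^{(2)}, A^{(1)}$, and $q$, I would introduce a large parameter $\tau$ and write $\zeta_l(\tau) = \tau w_l + \beta_l + O(\tau^{-1})$, where the leading isotropic vectors $w_0, w_1, w_2$ satisfy $w_0+w_1+w_2=0$ and the correction vectors $\beta_l$ are chosen recursively so that $\zeta_l\cdot\zeta_l = 0$ and $\zeta_0+\zeta_1+\zeta_2 = -i\xi$ to each order in $\tau$. Expanding the displayed identity as a polynomial in $\tau$ and matching coefficients yields a family of identities, one per power of $\tau$; the top-order coefficient sees only $A^{(3)}$, while successively lower ones mix in $A^{(2)}$, $A^{(1)}$, and $q$.

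Third, I would run a cascade. As $(\xi, w_l, \beta_l)$ vary over the admissible variety, the top-order identity becomes the vanishing of a contraction of $\widehat{A^{(3)}}(\xi)$ against a rich family of symmetric $3$-tensors built from the CGO parameters; this is precisely the input to a generalized momentum ray transform of $A^{(3)}$, whose injectivity forces $A^{(3)}\equiv 0$. Substituting back into the next-order identity produces analogous momentum-ray-transform data for $A^{(2)}$, and iterating delivers $A^{(2)}\equiv A^{(1)}\equiv 0$ and finally $\widehat{q}(\xi)\equiv 0$, hence $q\equiv 0$ in $\Omega$.

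The hard part will be the combinatorial and geometric control in the middle step: the $\zeta_l$'s are constrained to a $(2n{-}3)$-complex-dimensional subvariety of the isotropic cone, and the symmetrization $\zeta_1^{\otimes j}+\zeta_2^{\otimes j}$ automatically annihilates its leading $\tau^j$ piece whenever $j$ is odd and $w_2=-w_1$. Verifying that the correction vectors $\beta_l$ can be chosen so that the extracted tensor data spans enough of $S^j$ to detect every component of $A^{(j)}$, and then identifying the resulting operator with an injective generalized momentum ray transform on symmetric $j$-tensor fields, is the technical heart of the argument and the step I expect to occupy most of the proof.
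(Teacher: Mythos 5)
There is a genuine gap, and it is structural rather than technical. Your family of test functions consists solely of pure exponentials $e^{x\cdot\zeta_l}$ with isotropic phases $\zeta_l\cdot\zeta_l=0$. For such solutions the $j$-th term of \eqref{int_eq_6} only ever sees the contractions $\langle A^{(j)},\zeta_1^{\otimes j}+\zeta_2^{\otimes j}\rangle$, and any tensor of the form $i_\delta W$ is invisible to these: $\langle i_\delta W^{(1)},\zeta^{\otimes 3}\rangle$ is proportional to $(\zeta\cdot\zeta)(W^{(1)}\cdot\zeta)=0$ and $\langle i_\delta W^{(0)},\zeta^{\otimes 2}\rangle$ is proportional to $W^{(0)}(\zeta\cdot\zeta)=0$, identically in all your parameters and at every order of your $\tau$-expansion. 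Consequently, no choice of the vectors $w_l,\beta_l$ and no amount of coefficient matching can distinguish $(A^{(3)},A^{(2)})=(i_\delta W^{(1)},i_\delta W^{(0)})$ from $(0,0)$. Your cascade can at best yield that the trace-free parts of $A^{(3)}$ and $A^{(2)}$ vanish (indeed your top-order step gives $\langle\widehat{A^{(3)}}(\xi),w^{\otimes3}\rangle=0$ on the isotropic cone, which does imply $A^{(3)}=i_\delta W^{(1)}$ pointwise in $\xi$, and similarly one order down for $A^{(2)}$), together with $A^{(1)}=0$ and $q=0$; it cannot prove $W^{(1)}=0$ or $W^{(0)}=0$, so the conclusion $A^{(3)}=A^{(2)}=0$ is out of reach. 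Also note that no ``momentum ray transform'' ever appears in your scheme: the weights $t^k$ in that transform come from polynomial amplitudes, not from expanding isotropic phases in a large parameter, so the injectivity results you invoke have no counterpart in the data you actually generate. (A secondary point: for $n=3$ the leading vectors $w_0,w_1,w_2$ with $w_0+w_1+w_2=0$ and all isotropic are forced to be collinear, since totally isotropic subspaces of $\C^3$ are one-dimensional; this does not break the steps that do work, but it should be addressed.)

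The paper's proof is designed precisely to overcome this blindness. It uses CGO solutions whose amplitudes satisfy the second transport equation $T^2a_0=0$ and are chosen of the form $(y_2-c)^k f(z)g(y'')$ (Remark \ref{rem_choice_amplitude}); because the amplitudes are non-constant, the sub-leading terms in $h$ contain contractions of the tensors against $\zeta^{\otimes(j-1)}\otimes\nabla a_0$, which do not annihilate $i_\delta W^{(1)}$ and $i_\delta W^{(0)}$ (see \eqref{eq_4_17}--\eqref{eq_4_18} and \eqref{eq_4_30}--\eqref{eq_4_31}). The linear factors $(y_2-c)^k$ then convert these identities into weighted line integrals with weights $t,t^2,t^3$, i.e.\ generalized momentum ray transforms, and the decisive step is the coupled transform \eqref{eq_4_22} involving both $W^{(1)}$ and the trace-free part $B^{(2)}$ of $A^{(2)}$, whose injectivity (Lemma \ref{lem_inversion_2}, Corollary \ref{cor_inversion_2}) gives $W^{(1)}=0$ and $B^{(2)}=0$; a further step with \eqref{eq_4_36}--\eqref{eq_4_38} kills $W^{(0)}$. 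If you want to salvage your approach, you must enlarge your solution family beyond pure exponentials, e.g.\ to solutions of the form $(b\cdot x)e^{x\cdot\zeta}$ with $\zeta$ isotropic, which is in essence what the paper's amplitude choice accomplishes, and you would then be led back to the momentum ray transform machinery.
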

\begin{rem}
The assumption \eqref{eq_int_boundary_determination_2} in Theorem \ref{thm_density} can be removed by performing boundary determination, for instance as in \cite[Appendix C]{Krup_Uhlmann_magnetic}. This assumption ensures that when we extend $A^{(j)}$ to $\R^n\setminus \overline{\Omega}$ by setting it to zero outside of $\overline{\Omega}$ and denote this extension by the same letters, we satisfy $A^{(j)}\in C_0(\R^n; S^j)$ for $j=1,2,3$. The injectivity results for generalized momentum ray transforms, as presented in Lemma \ref{lem_inversion_1} and Lemma \ref{lem_inversion_2} below, apply precisely to this class of tensor fields. Our proof of Theorem \ref{thm_density} relies on these injectivity results. 
\end{rem}

To establish Theorem \ref{thm_density}, our first step involves constructing complex geometrical optics (CGO) solutions for the biharmonic equations, which belong to the function space $C^{4,\alpha}(\overline{\Omega})$. Since the integral identity \eqref{int_eq_6} involves products of three biharmonic functions and their derivatives up to the third order, it is advantageous to construct CGO solutions whose remainders with all derivatives up to the third order are small in $L^\infty$-norm. Additionally, we adopt specific amplitude choices as presented in \cite[Theorem 1.1]{Bhattacharyya_Krishnan_Sahoo_2021} for our constructed solutions.

By testing the integral identity  \eqref{int_eq_6} with the constructed solutions, we can effectively recover certain generalized momentum ray transforms on tensor fields as well as the sum of tensor fields of different ranks. The concept of momentum ray transforms was initially introduced in \cite{Sharafutdinov_1986}, with further investigations conducted in \cite{Sharafutdinov_bool_1994, Krishnan_Manna_Sahoo_Sharafutdinov_2019, Krishnan_Manna_Sahoo_Sharafutdinov_2020, Bhattacharyya_Krishnan_Sahoo_2021, Sahoo_Salo_2023, Ilmavirta_Kow_Sahoo_2023}. These transforms have proven instrumental in the study of inverse boundary problems for polyharmonic operators, as initially demonstrated in \cite{Bhattacharyya_Krishnan_Sahoo_2021} and also explored in \cite{Sahoo_Salo_2023,  Bhattacharyya_Kumar_2023}.
To conclude the proof of Theorem \ref{thm_density}, we leverage the injectivity results given in Lemma \ref{lem_inversion_1} and Lemma \ref{lem_inversion_2} below for the obtained momentum ray transforms.

\begin{rem}
Upon setting $v^{(2)}=1$ in \eqref{int_eq_6}, we obtain that
\begin{equation}
\label{int_eq_6_with_1}
\sum_{j=1}^3 \sum_{i_1,\dots, i_j=1}^n \int_{\Omega} A^{(j)}_{i_1\dots i_j}(x) (D^j_{i_1\dots i_j}v^{(1)}) v^{(0)}dx + \int_{\Omega} q(x)v^{(1)}v^{(0)}dx=0,
\end{equation}
for all $v^{(l)}\in C^{4,\alpha}(\overline{\Omega})$ solving $(-\Delta)^2v^{(l)}=0$ in $\Omega$, $l=0,1$. The integral identity \eqref{int_eq_6_with_1} corresponds to the same identity considered in \cite[Theorem 2.3]{Sahoo_Salo_2023}. In the aforementioned reference, the validity of \eqref{int_eq_6_with_1} is required for biharmonic functions $v^{(l)}\in H^4(\Omega)$, where $l=1,2$, and it is shown in \cite[Theorem 2.3]{Sahoo_Salo_2023} that the symmetric tensor of order three, denoted as $A^{(3)}$, can be recovered from this integral identity, up to a natural gauge. Therefore, it becomes apparent that the unique determination of a symmetric tensor of order three $A^{(3)}$ from \eqref{int_eq_6} necessitates the utilization of the freedom provided by employing three biharmonic functions in \eqref{int_eq_6}.
\end{rem}

\begin{rem}
In the case where $A^{(3)}=0$, it appears that to uniquely recover symmetric tensors $A^{(2)}$, $A^{(1)}$, and a function $q$ from \eqref{int_eq_6}, it is sufficient to work with two biharmonic functions. Indeed, assuming $A^{(3)}=0$ and setting $v^{(2)}=1$ in \eqref{int_eq_6}, we obtain the following identity,
\begin{equation}
\label{int_eq_6_with_1_second}
\sum_{j=1}^2 \sum_{i_1, i_j=1}^n \int_{\Omega} A^{(j)}_{i_1\dots i_j}(x) (D^j_{i_1 i_j}v^{(1)}) v^{(0)}dx + \int_{\Omega} q(x)v^{(1)}v^{(0)}dx=0,
\end{equation}
valid for all $v^{(l)}\in C^{4,\alpha}(\overline{\Omega})$ solving $(-\Delta)^2v^{(l)}=0$ in $\Omega$, $l=0,1$.  
The identity \eqref{int_eq_6_with_1_second} coincides with the identity considered in \cite[Theorem 2.1]{Sahoo_Salo_2023}, which holds for biharmonic functions $v^{(l)}\in H^4(\Omega)$. Consequently, following the proof of \cite[Theorem 2.1]{Sahoo_Salo_2023}, we can conclude that $A^{(j)}=0$ for $j=1,2$, and $q=0$ in $\Omega$.
\end{rem}

\begin{rem}
It is important to note that when establishing Theorem \ref{thm_density}, the recovery of $A^{(3)}$ is not independent of that of $A^{(2)}$. Initially, we can demonstrate that $A^{(3)}=i_\delta W^{(1)}$, where $W^{(1)}\in C_0(\R^n; S^{1})$. However, proving that $W^{(1)}=0$ requires the inversion of the generalized momentum ray transform, which involves both $W^{(1)}$ and the trace-free part of the tensor field $A^{(2)}$, as shown in \eqref{eq_4_22} below.
\end{rem}

Finally, it is worth mentioning that one could also explore the inverse boundary problem for the operator $L_{A^{(1)}, A^{(2)}, A^{(3)},q}$ with Navier boundary conditions instead of the Dirichlet conditions addressed in this paper. However, in the case of measurements taken on the entire boundary, the explicit description of the Laplacian in boundary normal coordinates, as outlined in \cite{Lee_Uhlmann_1989}, reveals that considering Navier boundary conditions will yield the same boundary data for the inverse problem.

The paper is organized as follows. In Section \ref{sec_tensors} we collect some facts on symmetric tensor fields and their momentum and generalized momentum ray transforms, including the injectivity results, which are required for proving Theorem \ref{thm_density}. For the completeness and convenience of the reader, we provide the proof of the necessary injectivity results for the generalized momentum ray transforms in Appendix \ref{sec_app_transforms}. The construction of complex geometric optics solutions to the biharmonic equations and the selection of appropriate amplitudes required for the proof of Theorem \ref{thm_density} are discussed in Section \ref{sec_CGO_solutions}. The proof of Theorem \ref{thm_density} is provided in Section \ref{sec_density}. Section \ref{sec_thm_main} is dedicated to the proof of Theorem \ref{thm_main}. Lastly, in Appendix \ref{sec_well-posedness}, the well-posedness of the Dirichlet problem for third-order nonlinear perturbations of biharmonic operators is demonstrated in the case of small boundary data.

\section{Momentum ray transforms}

\label{sec_tensors}

In this section we present some facts on symmetric tensor fields and their momentum and generalized momentum ray transforms, following \cite{Sharafutdinov_bool_1994, Bhattacharyya_Krishnan_Sahoo_2021, Sahoo_Salo_2023}, which are required for proving Theorem \ref{thm_density}.

\subsection{Symmetric tensor fields}

Let $n\ge 2$, and let $m\in \N\cup\{0\}$. Let  $T^m:=T^m(\R^n)$  be the complex vector space of functions $\underbrace{\R^n\times\dots\times\R^n}_{\text{m times}}\to \C$ which are $\R$--linear in each of its argument, and let $S^m:=S^m(\R^n)$ be its subspace which consists of functions symmetric in all arguments. The elements of $T^m$ are called tensors of degree $m$ while the elements of $S^m$ are said to be symmetric tensors of degree $m$. In particular, $S^0=\C$ and $S^1=\C^n$.  Let $\sigma: T^m\to S^m$ be the canonical projection (symmetrization) defined by 
\begin{equation}
\label{eq_5_1}
\sigma u(x_1,\dots, x_m)=\frac{1}{m!}\sum_{\pi\in \Pi_m} u (x_{\pi(1)}, \dots, x_{\pi(m)}),
\end{equation}
where the summation is taken over the group $\Pi_m$ of all permutations of the set $\{1,\dots, m\}$. 

Let $e_1,\dots, e_n$ be a basis of $\R^n$. Then the numbers $u_{i_1 \dots i_m}=u(e_{i_1}, \dots, e_{i_m})$ are said to be the covariant coordinates or components of $u\in T^m$. A tensor $u\in T^m$ belongs to $S^m$ if and only of its covariant components $u_{i_1 \dots i_m}$ are symmetric with respect to all indices. Together with  covariant coordinates $u_{i_1 \dots i_m}$ of the tensor $u$, one can define the contravariant coordinates $u^{i_1\dots i_m}$, see \cite[page 23]{Sharafutdinov_bool_1994}. In the case of the Euclidean metric tensor $g_{ij}=\delta_{ij}$, we have $u^{i_1\dots i_m}=u_{i_1\dots i_m}$.

The scalar product on $T^m$ is defined by 
\begin{equation}
\label{eq_5_2_scalar}
\langle u,v \rangle=\sum_{i_1,\dots, i_m=1}^n u^{i_1\dots i_m}\bar{v}_{i_1\dots i_m}, \quad u,v\in T^m. 
\end{equation}
For $ u\in T^k$ and $v\in T^m$, the tensor product $u\otimes v\in T^{k+m}$ is given by 
\[
u\otimes v(x_1,\dots, x_{k+m})=u(x_1,\dots, x_k)v(x_{k+1}, \dots, x_{k+m}).
\]

Together with the full symmetrization \eqref{eq_5_1}, given indices $i_1,\dots, i_p$, the operator of partial  symmetrization of a tensor $u\in T^m$ with respect to $i_1,\dots, i_p$, $p\le m$, is defined by 
\begin{equation}
\label{eq_5_2}
\sigma(i_1,\dots, i_p)u_{i_1,\dots, i_m}=\frac{1}{p!}\sum_{\pi\in \Pi_p} u_{i_{\pi(1)} \dots i_{\pi(p)} i_{p+1}\dots i_m}.
\end{equation}
 
We define the operator $i_\delta$ of symmetric multiplication by the Euclidean metric tensor $\delta$ as follows,
\begin{equation}
\label{eq_5_3}
i_\delta: S^m\to S^{m+2}, \quad (i_\delta u)_{i_1\dots i_{m+2}}=\sigma(i_1,\dots, i_{m+2})(u_{i_1\dots i_m}\delta_{i_{m+1}i_{m+2}}).
\end{equation}
We also define the operator $j_\delta$, dual to $i_\delta$, as follows, 
\begin{equation}
\label{eq_5_3_j}
j_{\delta}:S^m\to S^{m-2}, \quad (j_{\delta}u)_{i_1\dots i_{m-2}}=\sum_{i_{m-1}, i_m=1}^n u_{i_1\dots i_m}\delta^{i_{m-1}i_{m}}=\sum_{k=1}^n u_{i_1\dots i_{m-2}kk}.
\end{equation}
We also set $j_\delta f=0$ for $f\in S^0$ and $f\in S^1$. If for $f\in S^m$, $j_\delta f=0$, we say that $f$ is a trace free tensor. 

For $m\ge 0$, the following commutation formula holds on $S^m$, 
\begin{equation}
\label{eq_5_3_1}
j_\delta i_\delta=\frac{2(n+2m)}{(m+1)(m+2)}E+\frac{m(m-1)}{(m+1)(m+2)}i_\delta j_\delta,
\end{equation}
where $E$ is the identity operator on $S^m$, see \cite[Lemma 2.2]{Dairbekov_Sharafutdinov_2011}.

Let $\Omega\subset \R^n$ be open. We denote by $C^\infty_0(\Omega; S^m)$ the space of smooth compactly supported symmetric $m$--tensor fields on $\Omega$, 
and  by $\mathcal{E}'(\Omega; S^m)$ the space of compactly supported $m$--tensor field-distributions. 
We let $L^p(\Omega; S^m)$, $1\le p<\infty$, be  the space of $L^p$ integrable $m$--tensor fields on $\Omega$, and let $L^\infty(\Omega; S^m)$ be the spaces of bounded almost everywhere $m$--tensor fields on $\Omega$. We define the scalar product on $L^2(\Omega; S^m)$ by 
 \[
( u,v)_{L^2(\Omega;  S^m)}  =\int_{\R^n} \langle u(x),v(x)\rangle dx, 
\] 
where $\langle\cdot, \cdot\rangle $ is the scalar product on $S^m$ given by \eqref{eq_5_2_scalar}.

\subsection{Momentum ray transforms and their injectivity} 
Let 
\[
T\mathbb{S}^{n-1}=\{(x,\xi)\in \R^n\times \R^n: |\xi|=1,\  x\cdot\xi=0\}
\]
be the tangent bundle of the unit sphere $\mathbb{S}^{n-1}$, and let  $k\in \N\cup \{0\}$.  Following \cite{Sharafutdinov_1986}, see also
\cite{Sharafutdinov_bool_1994, Krishnan_Manna_Sahoo_Sharafutdinov_2019,  Krishnan_Manna_Sahoo_Sharafutdinov_2020, Bhattacharyya_Krishnan_Sahoo_2021, Sahoo_Salo_2023}, for $f^{(m)}\in C_0(\R^n; S^m)$ 
we define   the $k$th momentum ray transform $ \tilde I^{m,k}$ by 
\begin{equation}
\label{eq_6_1_tangent_bundle}
\tilde I^{m,k}f^{(m)}(x,\xi)=\sum_{i_1,\dots, i_m=1}^n\int_{\R} t^k f^{(m)}_{i_1\dots i_m}(x+t\xi)\xi_{i_1}\dots\xi_{i_m}dt, 
\end{equation}
where $(x,\xi)\in T\mathbb{S}^{n-1}$.  One can easily see that \begin{equation}
\label{eq_6_1_tangent_bundle_mapping}
 \tilde I^{m,k}: C_0(\R^n; S^m)\to C_0(T\mathbb{S}^{n-1}), \quad  \tilde I^{m,k}: C^\infty_0(\R^n; S^m)\to C^\infty_0(T\mathbb{S}^{n-1}),
\end{equation}
where $C_0(T\mathbb{S}^{n-1})$ ($C^\infty_0(T\mathbb{S}^{n-1})$) is the spaces of continuous (smooth) functions on $T\mathbb{S}^{n-1}$ that have compact support in the first variable. 

When $k=0$, $\tilde I^{m,0}$ is the classical ray transform. When $m=0$, the transform $\tilde I^{0,0}$ is the main mathematical tool of Computer Tomography. When $m=1$, the operator $\tilde I^{1,0}$ is known as the Doppler transform and is the key mathematical tool of Doppler Tomography. When $k>0$ and $m=0$, the operator $\tilde I^{0,k}$ appears in the study of the inversion of the cone transform and conical Radon transform whereas the latter transform has an application in Compton cameras, see \cite{Kuchment_Terzioglu_2017}.

We observe that \eqref{eq_6_1_tangent_bundle} also makes sense when $(x,\xi)\in \R^n\times (\R^n\setminus\{0\})$ and $f^{(m)}\in C_0(\R^n; S^m)$. We thus set  
\begin{equation}
\label{eq_6_1}
  I^{m,k}f^{(m)}(x,\xi)=\sum_{i_1,\dots, i_m=1}^n\int_{\R} t^k f^{(m)}_{i_1\dots i_m}(x+t\xi)\xi_{i_1}\dots\xi_{i_m}dt, 
\end{equation}
where $(x,\xi)\in \R^n\times (\R^n\setminus\{0\})$. A direct computation shows that the function $I^{m,k}f^{(m)}(x,\xi)$ has the following homogeneity property in the second variable, 
\begin{equation}
\label{eq_6_transfoem_2_var}
(I^{m,k}f^{(m)})(x,\tau\xi)= \frac{\tau^{m-k}}{|\tau|}(I^{m,k}f^{(m)})(x,\xi), \quad \tau\in \R\setminus\{0\},
\end{equation}
and the following property in the first variable,
\begin{equation}
\label{eq_6_transfoem_1_var}
(I^{m,k}f^{(m)})(x+\tau\xi,\xi)= \sum_{l=0}^k \begin{pmatrix} k\\
 l
 \end{pmatrix} (-\tau)^{k-l}
(I^{m,l}f^{(m)})(x,\xi), \quad \tau\in \R,
 \end{equation}
see \cite[formulas (2.3), (2.4)]{Krishnan_Manna_Sahoo_Sharafutdinov_2019}. It follows from \eqref{eq_6_transfoem_2_var} and \eqref{eq_6_transfoem_1_var} that
\begin{equation}
\label{eq_6_transfoem_both_var}
\begin{aligned}
(I^{m,k}f^{(m)})(x,\xi)= &|\xi|^{m-2k-1}\sum_{l=0}^k (-1)^{k-l}
 \begin{pmatrix} k\\
 l
 \end{pmatrix}
 |\xi|^{l} (\xi\cdot  x)^{k-l}\\
 &\times (I^{m,l}f^{(m)})\bigg(x-\frac{ \xi\cdot  x}{|\xi|^2}\xi,\frac{\xi}{|\xi|}\bigg), 
 \end{aligned}
\end{equation}
see \cite[formula (2.6)]{Krishnan_Manna_Sahoo_Sharafutdinov_2019}.  Note that here $(x-\frac{ \xi\cdot  x}{|\xi|^2}\xi, \frac{\xi}{|\xi|}) \in T\mathbb{S}^{n-1}$. Therefore, using \eqref{eq_6_1_tangent_bundle_mapping}, we see from \eqref{eq_6_transfoem_both_var} that 
\begin{equation}
\label{eq_6_1_mapping}
\begin{aligned}
&I^{m,k}: C_0(\R^n; S^m)\to C(\R^n\times(\R^n\setminus\{0\})), \\
 &I^{m,k}: C^\infty_0(\R^n; S^m)\to C^\infty(\R^n\times(\R^n\setminus\{0\})).
\end{aligned}
\end{equation}

We can also consider the $k$th momentum ray transform, defined by \eqref{eq_6_1_tangent_bundle}, for $(x,\xi)\in \R^n\times \mathbb{S}^{n-1}$ and $f^{(m)}\in C_0(\R^n; S^m)$. This transform will be denoted as $J^{m,k}$, i.e
\begin{equation}
\label{eq_6_1_J_new}
  J^{m,k}f^{(m)}(x,\xi)=\sum_{i_1,\dots, i_m=1}^n\int_{\R} t^k f^{(m)}_{i_1\dots i_m}(x+t\xi)\xi_{i_1}\dots\xi_{i_m}dt, 
\end{equation}
where $(x,\xi)\in \R^n\times  \mathbb{S}^{n-1}$.  We have $J^{m,k}f^{(m)}=I^{m,k}f^{(m)}|_{\R^n\times \mathbb{S}^{n-1}}$, and therefore, 
\begin{equation}
\label{eq_6_1_J_new_mapping}
J^{m,k}: C_0(\R^n; S^m)\to C(\R^n\times \mathbb{S}^{n-1}), \ J^{m,k}: C^\infty_0(\R^n; S^m)\to C^\infty(\R^n\times\mathbb{S}^{n-1} ).
\end{equation}
When $f^{(m)}\in C_0(\R^n; S^m)$,  \eqref{eq_6_transfoem_2_var} implies that 
\begin{equation}
\label{eq_201_0}
I^{m,k}f^{(m)}(x,\xi)=|\xi|^{m-k-1}J^{m,k}f^{(m)}\bigg(x,\frac{\xi}{|\xi|}\bigg), \quad x\in \R^n, \quad \xi\in \R^n\setminus\{0\}.
\end{equation}
Hence,  the operators $I^{m,k}$ and $J^{m,k}$, defined on $C_0(\R^n; S^m)$, provide equivalent information. 

In what follows, we shall only consider the momentum ray transforms $I^{m,k}$ and $J^{m,k}$. We shall next extend the definition \eqref{eq_6_1} of  $I^{m,k}$ to $\mathcal{E}'(\R^n; S^m)$. To that end, we let  $\mathcal{D}'(\R^n\times (\R^n\setminus\{0\}))$ be the spaces of distributions on $\R^n\times (\R^n\setminus\{0\})$. 
The $k$th momentum ray transform $I^{m,k}$ on $\mathcal{E}'(\R^n; S^m)$ is defined as follows, 
\begin{equation}
\label{eq_6_2_tras_def}
\begin{aligned}
I^{m,k}: \mathcal{E}'(\R^n; S^m)\to \mathcal{D}'(\R^n\times (\R^n\setminus\{0\})),\quad \langle I^{m,k} f^{(m)}, \psi\rangle=\langle f^{(m)}, (I^{m,k})^*\psi\rangle,
\end{aligned}
\end{equation}
where $\psi\in C^\infty_0(\R^n\times (\R^n\setminus\{0\}))$ and $(I^{m,k})^*$ is the adjoint of $I^{m,k}$ given by 
\begin{equation}
\label{eq_6_2}
\begin{aligned}
&(I^{m,k})^*: C^\infty_0(\R^n\times (\R^n\setminus\{0\}))\to C^\infty(\R^n; S^m),\\
&((I^{m,k})^*\psi)_{i_1\dots i_m}(x)=\int_{\R^n}\int_{\R}t^k\xi_{i_1}\dots\xi_{i_m}\psi(x-t\xi,\xi)dtd\xi, \quad 1\le i_1,\dots, i_m\le n,
\end{aligned}
\end{equation}
see \cite[Definition 4.2]{Bhattacharyya_Krishnan_Sahoo_2021}.  One can easily see that when $f^{(m)}\in C_0(\R^n;S^m)$, the $k$th momentum ray transform $I^{m,k}f^{(m)}$ defined by \eqref{eq_6_1} satisfies \eqref{eq_6_2_tras_def}.

Let $C_0(\R^n;\mathcal{S}^m):=C_0(\R^n;S^0)\oplus C_0(\R^n;S^1)\dots \oplus C_0(\R^n;S^m)$. Then any $F\in C_0(\R^n;\mathcal{S}^m)$ can be written uniquely as 
\begin{equation}
\label{eq_6_2_00}
F=\sum_{l=0}^m f^{(l)}=f^{(0)}+ \sum_{i_1=1}^n f^{(1)}_{i_1}dx^{i_1}+\dots + \sum_{i_1,\dots, i_m=1}^n f^{(m)}_{i_1\dots i_m}dx^{i_1}\dots dx^{i_m}.
\end{equation}
Let $k\in \N\cup \{0\}$. For $F\in C_0(\R^n; \mathcal{S}^m)$, the $k$th momentum ray transform $I^{m,k}$ is defined by 
\[
I^{m,k}F:=\sum_{l=0}^m I^{l,k} f^{(l)},
\]
where $I^{l,k}f^{(l)}$ is given by   \eqref{eq_6_1} with $m=l$.  Let $\mathcal{E}'(\R^n; \mathcal{S}^m):=\mathcal{E}'(\R^n;S^0)\oplus \mathcal{E}'(\R^n;S^1)\dots \oplus \mathcal{E}'(\R^n;S^m)$.  The $k$th momentum ray transform $I^{m,k}$ can be extended to $\mathcal{E}'(\R^n;\mathcal{S}^m)$ as follows,  
\begin{align*}
I^{m,k}: \mathcal{E}'(\R^n; \mathcal{S}^m)\to \mathcal{D}'(\R^n\times (\R^n\setminus\{0\})),\quad \langle I^{m,k} F, \psi\rangle=\sum_{l=0}^m\langle f^{(l)}, (I^{l,k})^*\psi\rangle,
\end{align*}
where $\psi\in C^\infty_0(\R^n\times (\R^n\setminus\{0\}))$ and $(I^{l,k})^*$ is given by \eqref{eq_6_2} with $m=l$, see  \cite[Definition 4.4]{Bhattacharyya_Krishnan_Sahoo_2021}.

Next, we state the injectivity results for the momentum ray transforms. Let us first recall that when $m\ge 1$, the ray transform $I^{m,0}$ always has a nontrivial kernel. Indeed, any symmetric tensor field can be decomposed uniquely  into its solenoidal and potential parts, see \cite[Theorem 2.6.2]{Sharafutdinov_bool_1994}, and the potential part lies in the kernel. 

\begin{thm}{\cite[Theorem 4.5]{Bhattacharyya_Krishnan_Sahoo_2021}}
Let $F\in \mathcal{E}'(\R^n; \mathcal{S}^m)$. If $I^{m,k}F=0$ for all $k=0,1,\dots, m$, then $F=0$. 
\end{thm}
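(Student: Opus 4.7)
The plan is to reduce the mixed-rank statement to the known injectivity of the momentum ray transforms on a single symmetric tensor field, with the reduction carried out by exploiting the distinct homogeneities in $\xi$ of the various pieces. Write $F = \sum_{l=0}^m f^{(l)}$ with $f^{(l)} \in \mathcal{E}'(\R^n; S^l)$, so that the hypothesis reads $\sum_{l=0}^m I^{l,k} f^{(l)} = 0$ in $\mathcal{D}'(\R^n \times (\R^n \setminus \{0\}))$ for each $k = 0, 1, \ldots, m$.

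For the decoupling, I would use the homogeneity relation \eqref{eq_6_transfoem_2_var}: for each fixed $k$, the distribution $I^{l,k} f^{(l)}(x, \xi)$ is positively homogeneous of degree $l - k - 1$ in $\xi$, and since the integers $\{l - k - 1 : l = 0, 1, \ldots, m\}$ are pairwise distinct for fixed $k$, the summands live in linearly independent homogeneity classes. Concretely, one tests the identity against $\psi_\tau(x, \xi) := \phi(x) \chi(\xi/|\xi|) \rho(|\xi|/\tau)$ with $\rho$ supported away from $0$, varies the scaling parameter $\tau > 0$, and reads off a polynomial identity in $\tau$ whose coefficients are, up to nonzero constants of the form $\int \rho(s) s^{n + l - k - 2}\, ds$, the pairings $\langle I^{l,k} f^{(l)}, \phi \otimes \chi\rangle$. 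Vanishing of the polynomial forces each coefficient to vanish, yielding $I^{l,k} f^{(l)} = 0$ for every pair $(l, k)$ with $0 \le l, k \le m$.

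The second step is the single-rank injectivity: if $f \in \mathcal{E}'(\R^n; S^l)$ and $I^{l,k} f = 0$ for $k = 0, 1, \ldots, l$, then $f = 0$. Since the hypothesis of the theorem supplies $k$ all the way up to $m \geq l$, the required data is certainly in hand. This single-rank statement is proved by the Fourier slice method: the $x$-Fourier transform of $I^{l,k} f(x, \xi)$ is a distribution of the form $c_k\, \delta^{(k)}(\eta \cdot \xi)$ multiplied by the scalar $\widehat{f}(\eta) \cdot \xi^{\otimes l}$, a polynomial of degree $l$ in $\xi$. Letting $k$ range over $0, 1, \ldots, l$ recovers the full transverse $l$-jet of this polynomial on the hyperplane $\{\eta \cdot \xi = 0\}$, and since a polynomial of degree $l$ in $\xi$ is determined by its $l$-jet along any hyperplane, one recovers $\widehat{f}(\eta) \cdot \xi^{\otimes l}$ for all $(\eta, \xi)$. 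Varying $\xi$ then isolates each component of $\widehat{f}$, and real-analyticity of $\widehat{f}$ (Paley--Wiener) upgrades pointwise vanishing to $f = 0$.

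The main obstacle is Step 2, the single-rank injectivity: the classical transform $I^{l,0}$ has the potential tensor fields in its kernel for $l \geq 1$, and one must verify that the higher moments $I^{l,1}, \ldots, I^{l,l}$ provide exactly the missing transverse information to fill in the $l$-jet along $\{\eta \cdot \xi = 0\}$. The decoupling step, by contrast, is a direct consequence of the explicit homogeneity identity \eqref{eq_6_transfoem_2_var} and amounts to bookkeeping; once the single-rank result is granted from the references \cite{Krishnan_Manna_Sahoo_Sharafutdinov_2019, Krishnan_Manna_Sahoo_Sharafutdinov_2020}, the entire proof reduces to the homogeneity separation described above.
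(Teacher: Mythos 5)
This theorem is not proved in the paper at all: it is imported verbatim from \cite[Theorem 4.5]{Bhattacharyya_Krishnan_Sahoo_2021}, so there is no in-paper argument to compare against, and your proposal has to be judged on its own merits. On those terms the overall route is the natural one and is sound. The decoupling step is correct: pairing with $\psi_\tau(x,\xi)=\phi(x)\chi(\xi/|\xi|)\rho(|\xi|/\tau)$ and changing variables in the adjoint $(I^{l,k})^*$ from \eqref{eq_6_2} gives $\langle I^{l,k}f^{(l)},\psi_\tau\rangle=\tau^{\,n+l-k-1}\bigl(\int_0^\infty \sigma^{\,n+l-k-2}\rho(\sigma)\,d\sigma\bigr)\langle J^{l,k}f^{(l)},\phi\otimes\chi\rangle$, and since for fixed $k$ the exponents $n+l-k-1$, $l=0,\dots,m$, are distinct integers (possibly negative, so ``polynomial in $\tau$'' is a slight misnomer, but linear independence of distinct powers on $(0,\infty)$ still applies), each pairing vanishes, giving $I^{l,k}f^{(l)}=0$ for all $0\le l\le m$, $0\le k\le m$. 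This computation also supplies the homogeneity of the distributional extension, which the paper's identity \eqref{eq_6_transfoem_2_var} states only for continuous compactly supported fields; and the pairings you extract are really pairings of $J^{l,k}f^{(l)}$ with $\phi\otimes\chi$, since $\chi$ lives on the sphere. None of this is a problem, just bookkeeping to be written out.

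The point that needs care is the provenance of your Step 2. For compactly supported \emph{distributions}, the single-rank statement ($f\in\mathcal{E}'(\R^n;S^l)$, $I^{l,k}f=0$ for $k=0,\dots,l$ implies $f=0$) is, in the present paper's own logical ordering, a corollary of the very theorem you are proving, and the references \cite{Krishnan_Manna_Sahoo_Sharafutdinov_2019, Krishnan_Manna_Sahoo_Sharafutdinov_2020} establish it for smooth or Schwartz-class fields; quoting them as a black box therefore risks circularity, or at least a regularity mismatch that must be addressed. Your Fourier-slice sketch is the right way to close this: for $f\in\mathcal{E}'$ the partial Fourier transform in $x$ of $I^{l,k}f$ is $c_k\,\delta^{(k)}(\eta\cdot\xi)\,\langle\hat f(\eta),\xi^{\otimes l}\rangle$, vanishing for $k=0,\dots,l$ kills the transverse $l$-jet of the degree-$l$ polynomial $\xi\mapsto\langle\hat f(\eta),\xi^{\otimes l}\rangle$ along $\{\eta\cdot\xi=0\}$ and hence the polynomial itself for every fixed $\eta\neq 0$, and Paley--Wiener analyticity of $\hat f$ finishes the argument. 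But this is exactly where the genuine content of the theorem sits (as you note, the classical kernel of $I^{l,0}$ consists of potential fields, and the higher moments must be shown to supply precisely the missing transverse derivatives), so the distributional justification of the slice formula, of the product $\delta^{(k)}(\eta\cdot\xi)P(\eta,\xi)$, and of the jet extraction for fixed $\eta$ should be carried out in detail rather than asserted; with that done, your argument gives a complete and self-contained proof.
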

\begin{cor}
Let $f^{(m)}\in \mathcal{E}'(\R^n; S^m)$. If $I^{m,k}f^{(m)}=0$ for all $k=0,1,\dots, m$, then $f^{(m)}=0$. 
\end{cor}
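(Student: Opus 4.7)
The plan is to obtain the corollary as an immediate specialization of the preceding Theorem 4.5 (the injectivity result for $F\in \mathcal{E}'(\R^n;\mathcal{S}^m)$). The point is simply that a pure-degree-$m$ symmetric tensor distribution can be viewed as an element of the direct sum $\mathcal{E}'(\R^n;\mathcal{S}^m)=\mathcal{E}'(\R^n;S^0)\oplus\dots\oplus\mathcal{E}'(\R^n;S^m)$ with all components of degree $<m$ equal to zero.

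More precisely, given $f^{(m)}\in \mathcal{E}'(\R^n;S^m)$ satisfying $I^{m,k}f^{(m)}=0$ for $k=0,1,\dots,m$, I would set
\[
F:=0\oplus 0\oplus\dots\oplus 0\oplus f^{(m)}\in \mathcal{E}'(\R^n;\mathcal{S}^m),
\]
i.e.\ $f^{(l)}=0$ for $l=0,1,\dots,m-1$ and $f^{(m)}$ as given. By the definition of $I^{m,k}$ on $\mathcal{E}'(\R^n;\mathcal{S}^m)$, one has
\[
\langle I^{m,k}F,\psi\rangle=\sum_{l=0}^m\langle f^{(l)},(I^{l,k})^*\psi\rangle=\langle f^{(m)},(I^{m,k})^*\psi\rangle=\langle I^{m,k}f^{(m)},\psi\rangle=0
\]
for every test function $\psi\in C^\infty_0(\R^n\times(\R^n\setminus\{0\}))$ and every $k=0,1,\dots,m$. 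Thus $I^{m,k}F=0$ in $\mathcal{D}'(\R^n\times(\R^n\setminus\{0\}))$ for $k=0,1,\dots,m$.

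Applying Theorem 4.5 to $F$ then yields $F=0$ in $\mathcal{E}'(\R^n;\mathcal{S}^m)$. Since the decomposition \eqref{eq_6_2_00} is unique, each component vanishes, and in particular $f^{(m)}=0$, which completes the proof. There is no real obstacle here; the entire content of the statement is that the hypothesis on the single-degree tensor is strong enough to invoke Theorem 4.5 by trivially padding with zero components of lower degree.
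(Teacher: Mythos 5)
Your proof is correct and follows exactly the route the paper intends: the corollary is stated as an immediate consequence of the preceding theorem, obtained precisely by regarding $f^{(m)}$ as an element of $\mathcal{E}'(\R^n;\mathcal{S}^m)$ with vanishing lower-degree components and checking, as you do, that the distributional definitions of $I^{m,k}$ on $S^m$ and on $\mathcal{S}^m$ agree in this case. Nothing further is needed.
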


The following stronger injectivity result holds. 
\begin{thm}{\cite[Theorem 4.9]{Bhattacharyya_Krishnan_Sahoo_2021}}
\label{thm_higher_order_on}
Let $F\in \mathcal{E}'(\R^n; \mathcal{S}^m)$. If $I^{m,m}F=0$  then $F=0$. 
\end{thm}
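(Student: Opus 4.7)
The plan is to exploit two simple symmetries of $I^{l,k}$ already recorded earlier in this section, namely the homogeneity in $\xi$ \eqref{eq_6_transfoem_2_var} and the translation law \eqref{eq_6_transfoem_1_var}, in order to decouple the hypothesis $I^{m,m}F = 0$ into statements about each tensor component $f^{(l)}$ separately, and then to invoke the preceding corollary (injectivity of the full family $\{I^{l,k}\}_{k=0}^{l}$ on $\mathcal{E}'(\R^n;S^l)$).

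Write $F = \sum_{l=0}^m f^{(l)}$ with $f^{(l)} \in \mathcal{E}'(\R^n; S^l)$, so by definition
\[
0 = I^{m,m}F = \sum_{l=0}^m I^{l,m} f^{(l)} \quad \text{in } \mathcal{D}'(\R^n \times (\R^n \setminus \{0\})).
\]
Substituting $\tau\xi$ for $\xi$ with $\tau > 0$ and applying \eqref{eq_6_transfoem_2_var} with $k = m$ gives $I^{l,m}f^{(l)}(x,\tau\xi) = \tau^{l - m - 1} I^{l,m}f^{(l)}(x,\xi)$, hence
\[
0 = \sum_{l=0}^m \tau^{l - m - 1} \, I^{l,m}f^{(l)}(x,\xi) \qquad \text{for every } \tau > 0.
\]
Multiplying by $\tau^{m+1}$ converts the right-hand side into a polynomial of degree $m$ in $\tau$ that vanishes identically, or equivalently one evaluates at $m+1$ distinct positive values of $\tau$ to obtain a Vandermonde system whose nonzero determinant forces $I^{l,m}f^{(l)} = 0$ in $\mathcal{D}'(\R^n \times (\R^n \setminus \{0\}))$ separately for each $l \in \{0, 1, \dots, m\}$.

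Next, fix one such $l$ and exploit the translation law \eqref{eq_6_transfoem_1_var}:
\[
0 = I^{l,m}f^{(l)}(x + \tau\xi, \xi) = \sum_{j=0}^m \binom{m}{j} (-\tau)^{m-j} \, I^{l,j}f^{(l)}(x,\xi), \qquad \tau \in \R.
\]
The right-hand side is again a polynomial of degree $m$ in $\tau$, so linear independence of $\{\tau^{m-j}\}_{j=0}^m$ yields $I^{l,j}f^{(l)} = 0$ for every $j = 0, 1, \dots, m$; in particular for $j = 0, 1, \dots, l$, and the corollary immediately preceding this theorem then gives $f^{(l)} = 0$. Summing over $l$ we conclude $F = 0$.

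The only point that deserves care is that \eqref{eq_6_transfoem_2_var} and \eqref{eq_6_transfoem_1_var}, initially stated for $f^{(l)} \in C_0(\R^n; S^l)$, continue to hold for compactly supported distributions; this is routine given the duality definition \eqref{eq_6_2_tras_def}--\eqref{eq_6_2}, since the dilation $\xi \mapsto \tau\xi$ and the translation $x \mapsto x + \tau\xi$ on the distribution side correspond to the obvious pull-backs of test functions on the adjoint side. Once this is noted, the entire proof amounts to two invocations of Vandermonde-style linear independence followed by the previously established injectivity result.
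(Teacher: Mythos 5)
Your argument is correct. Note first that the paper does not reprove this result; it is quoted verbatim from \cite[Theorem 4.9]{Bhattacharyya_Krishnan_Sahoo_2021}, so there is no in-text proof to compare against. That said, your two-step reduction (homogeneity in $\xi$ to decouple the ranks $l$, then the translation identity \eqref{eq_6_transfoem_1_var} to recover $I^{l,j}f^{(l)}=0$ for all lower moments $j$, then the preceding Corollary) is valid, and your justification that both symmetry laws transfer from $C_0$ tensor fields to $\mathcal{E}'$ via the duality definitions \eqref{eq_6_2_tras_def}--\eqref{eq_6_2} is exactly the routine change-of-variables check one needs.

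One remark worth making: the first step is dispensable. Since $I^{m,m}F=\sum_{l=0}^m I^{l,m}f^{(l)}$, applying the translation law to each summand and collecting terms gives
\[
0=(I^{m,m}F)(x+\tau\xi,\xi)=\sum_{j=0}^m\binom{m}{j}(-\tau)^{m-j}\,(I^{m,j}F)(x,\xi),\qquad \tau\in\R,
\]
directly in $\mathcal{D}'(\R^n\times(\R^n\setminus\{0\}))$, and linear independence of $\{\tau^{m-j}\}_{j=0}^m$ already yields $I^{m,j}F=0$ for every $j=0,\dots,m$; the paper's Theorem~\ref{thm_higher_order_on} then follows from the theorem immediately preceding Corollary~2.2 (the injectivity of the full family $\{I^{m,k}\}_{k=0}^m$ on $\mathcal{E}'(\R^n;\mathcal{S}^m)$), with no need to first separate the ranks via the scaling in $\xi$. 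Your version does the decoupling first and then applies the componentwise corollary, which is equally correct but slightly longer. Both routes hinge on exactly the same two commutation formulas and the same underlying injectivity result, so the mathematical content agrees with what the citation asserts.
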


We extend the operator $J^{m,k}$ to $\mathcal{E}'(\R^n; S^m)$ as follows, 
\begin{equation}
\label{eq_6_3_def_transf}
\begin{aligned}
J^{m,k}: \mathcal{E}'(\R^n;  S^m)\to \mathcal{D}'(\R^n\times \mathbb{S}^{n-1}),\quad \langle J^{m,k} f^{(m)}, \psi\rangle=\langle f^{(m)}, (J^{m,k})^*\psi\rangle.
\end{aligned}
\end{equation}
Here $\psi\in C^\infty_0(\R^n\times \mathbb{S}^{n-1})$ and $(J^{m,k})^*$ is the adjoint of $J^{m,k}$ given by 
\begin{equation}
\label{eq_6_3}
\begin{aligned}
&(J^{m,k})^*: C^\infty_0(\R^n\times \mathbb{S}^{n-1})\to C^\infty(\R^n; S^m),\\
&((J^{m,k})^*\psi)_{i_1\dots i_m}(x)=\int_{\mathbb{S}^{n-1}}\int_{\R}t^k\xi_{i_1}\dots\xi_{i_m}\psi(x-t\xi,\xi)dtdS(\xi), \  1\le i_1,\dots, i_m\le n,
\end{aligned}
\end{equation}
where $d S$ is the Euclidean surface element on the sphere $\mathbb{S}^{n-1}$, see \cite[Section 2.5]{Sharafutdinov_bool_1994}. One can easily see that when $f^{(m)}\in C_0(\R^n;S^m)$, the $k$th momentum ray transform  $J^{m,k} f^{(m)}$ defined by \eqref{eq_6_1_J_new}  satisfies \eqref{eq_6_3_def_transf}.

Let $F\in C_0(\R^n;\mathcal{S}^m)$ be given by \eqref{eq_6_2_00}. The $k$th momentum ray transform $J^{m,k}$ is defined by 
\[
J^{m,k}F:=\sum_{l=0}^m J^{l,k} f^{(l)},
\]
where $ J^{l,k} f^{(l)}$ is given by \eqref{eq_6_1_J_new} with $m=l$.  Furthermore, $J^{m,k}$ can be extended to $\mathcal{E}'(\R^n; \mathcal{S}^m)$ as follows,  
\begin{align*}
J^{m,k}: \mathcal{E}'(\R^n; \mathcal{S}^m)\to\mathcal{D}'(\R^n\times \mathbb{S}^{n-1}),\quad \langle J^{m,k} F, \psi\rangle=\sum_{l=0}^m\langle f^{(l)}, (J^{l,k})^*\psi\rangle,
\end{align*}
where $\psi\in C^\infty_0(\R^n\times \mathbb{S}^{n-1})$  and $(J^{l,k})^*$ is given by \eqref{eq_6_3} with $m=l$. 

In what follows, we shall need some properties of the $k$th momentum ray transform, which are obtained by a direct computation. When $f^{(m)}\in C_0(\R^n; S^m)$, we have 
\begin{equation}
\label{eq_201_1}
J^{m,k}f^{(m)}(x,-\xi)=(-1)^{m-k}J^{m,k}f^{(m)}(x,\xi),
\end{equation}
for all $(x,\xi)\in \R^n\times \mathbb{S}^{n-1}$. When  $f^{(0)}\in C_0(\R^n; S^0)$ and  $f^{(1)} \in C_0(\R^n; S^1)$, we also have
\begin{equation}
\label{eq_201_2}
J^{0, k}f^{(0)}(x,\xi)=J^{2,k}(i_\delta f^{(0)})(x,\xi),  \quad
J^{1,k}f^{(1)}(x,\xi)=J^{3,k}(i_\delta f^{(1)})(x,\xi),
\end{equation}
for all $(x,\xi)\in \R^n\times \mathbb{S}^{n-1}$.  Note that in \eqref{eq_201_2} it is crucial that $\xi\in \mathbb{S}^{n-1}$. Let $\xi\in\mathbb{S}^{n-1}$ and $f^{(m)}\in \mathcal{E}'(\R^n; S^m)$. Then we have 
\begin{equation}
\label{eq_6_4}
(\xi\cdot\p_x) J^{m,k}f^{(m)}=-kJ^{m,k-1}f^{(m)} \quad \text{in}\quad \mathcal{D}'(\R^n\times \mathbb{S}^{n-1}),
\end{equation}
see \cite[Theorem 4.8]{Bhattacharyya_Krishnan_Sahoo_2021}.

We shall need the following result established in \cite[Theorem 4.18]{Bhattacharyya_Krishnan_Sahoo_2021}. We state it only in the special cases of $m$--tensor fields with $m=0,1,2,3$ needed here. While the statement given in \cite[Theorem 4.18]{Bhattacharyya_Krishnan_Sahoo_2021} is for smooth with compact support tensor fields, the proof works in the case of continuous with compact support tensor fields as well, and we shall present it in Appendix \ref{sec_app_transforms_1} in the special cases needed for completeness, and convenience of the reader. 
\begin{thm}{\cite[Theorem 4.18]{Bhattacharyya_Krishnan_Sahoo_2021}}
\label{thm_Bhattacharyya_Krishnan_Sahoo}
Let $m=0,1,2,3$ and let $F=\sum_{l=0}^m f^{(l)}\in C_0(\R^n; \mathcal{S}^m)$. We have 
$J^{m,m}F(x,\xi)=0$, for all $(x,\xi)\in \R^n\times \mathbb{S}^{n-1}$ if and only if 
\begin{itemize}
\item[(i)] $f^{(2)}=-i_{\delta} f^{(0)}$ and $f^{(3)}=-i_{\delta} f^{(1)}$, when $m=3$, 
\item[(ii)] $f^{(2)}=-i_{\delta} f^{(0)}$ and $f^{(1)}=0$,  when $m=2$, 
\item[(iii)]  $f^{(1)}=0$ and $f^{(0)}=0$,  when $m=1$,
\item[(iv)]  $f^{(0)}=0$, when $m=0$. 
\end{itemize}
\end{thm}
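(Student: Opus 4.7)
The proof goes by handling both directions separately, using four ingredients: the $\xi$--parity identity \eqref{eq_201_1}, the rank-raising identities \eqref{eq_201_2}, the transport identity \eqref{eq_6_4}, and the injectivity result in Theorem \ref{thm_higher_order_on} (combined with \eqref{eq_201_0} to translate between $J^{p,p}$ and $I^{p,p}$). The ``if'' direction is a direct substitution into $J^{m,m}F=\sum_{l=0}^m J^{l,m}f^{(l)}$: by \eqref{eq_201_2}, the terms pair off. For example, for $m=3$, the relations $f^{(2)}=-i_\delta f^{(0)}$ and $f^{(3)}=-i_\delta f^{(1)}$ give
\[
J^{0,3}f^{(0)}+J^{2,3}f^{(2)}=J^{2,3}(i_\delta f^{(0)}+f^{(2)})=0,
\]
\[
J^{1,3}f^{(1)}+J^{3,3}f^{(3)}=J^{3,3}(i_\delta f^{(1)}+f^{(3)})=0,
\]
and the cases $m=0,1,2$ are analogous or trivial.

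For the ``only if'' direction, the parity identity \eqref{eq_201_1} reads $J^{l,m}f^{(l)}(x,-\xi)=(-1)^{l-m}J^{l,m}f^{(l)}(x,\xi)$, so the vanishing of $J^{m,m}F$ decouples into the vanishing of its $\xi$--even part (indices $l\equiv m\pmod 2$) and its $\xi$--odd part (indices $l\not\equiv m\pmod 2$). Applying \eqref{eq_201_2} to each part consolidates it into an equation $J^{p,m}g=0$ on $\R^n\times\mathbb{S}^{n-1}$ for a single symmetric $p$--tensor $g$, where $p$ is either $m$ or $m-1$. When $p=m$, Theorem \ref{thm_higher_order_on} (via \eqref{eq_201_0}) yields $g=0$ directly. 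When $p=m-1$, one applies the transport identity \eqref{eq_6_4} once to descend from $J^{m-1,m}g=0$ to $J^{m-1,m-1}g=0$ and then concludes $g=0$ as before. Reading off $g$ in each case reproduces the claimed relations: for $m=3$ the $\xi$--odd part yields $J^{2,3}(i_\delta f^{(0)}+f^{(2)})=0$, one transport step gives $J^{2,2}(i_\delta f^{(0)}+f^{(2)})=0$, and hence $f^{(2)}=-i_\delta f^{(0)}$; the $\xi$--even part yields $J^{3,3}(i_\delta f^{(1)}+f^{(3)})=0$, hence $f^{(3)}=-i_\delta f^{(1)}$. The cases $m=2$, $m=1$, $m=0$ are handled in the same way, with the remaining tensor of lowest rank being eliminated by at most one transport step before invoking Theorem \ref{thm_higher_order_on}.

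\textbf{Main obstacle.} Conceptually the argument is essentially bookkeeping once the four ingredients above are assembled. The mildly delicate points are applying the transport identity \eqref{eq_6_4} distributionally on $\R^n\times\mathbb{S}^{n-1}$ (since $f^{(l)}\in C_0(\R^n;S^l)$ is only continuous, not smooth), and correctly pairing each parity class with the appropriate rank-raising identity from \eqref{eq_201_2} for each $m\in\{0,1,2,3\}$.
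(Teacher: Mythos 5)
Your proposal is correct and follows essentially the same route as the paper's Appendix~\ref{sec_app_transforms_1} proof: split $J^{m,m}F=0$ into $\xi$--even and $\xi$--odd parts using \eqref{eq_201_1}, consolidate each part into a single $J^{p,m}$ applied to one symmetric tensor via \eqref{eq_201_2}, apply \eqref{eq_6_4} once when $p=m-1$ (using continuity to upgrade the distributional identity to a pointwise one), and finish with Theorem~\ref{thm_higher_order_on} via \eqref{eq_201_0}. The delicate points you flag are exactly the ones the paper handles, and it handles them in the same way.
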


\begin{rem}
It follows from Theorem \ref{thm_higher_order_on} and Theorem \ref{thm_Bhattacharyya_Krishnan_Sahoo} that while the kernel of $I^{m,m}$ on $ C_0(\R^n; \mathcal{S}^m)$ is trivial, the kernel of $J^{m,m}$  on $ C_0(\R^n; \mathcal{S}^m)$  is nontrivial when $m\ge 2$. 
\end{rem}

To prove Theorem \ref{thm_density}, we need to utilize a result related to the injectivity of the generalized momentum ray transform, as defined in \eqref{eq_5_4} below. This result has been established in \cite[Lemma 3.7]{Bhattacharyya_Krishnan_Sahoo_2021}, see also \cite[Lemma 3.1]{Sahoo_Salo_2023}. We will only state it in the specific cases where $m=1, 2, 3$ -- these cases are essential for this paper. The proof of this result can be found in Appendix \ref{sec_app_transforms_2} for completeness and convenience. 
\begin{lem}{\cite[Lemma 3.7]{Bhattacharyya_Krishnan_Sahoo_2021}}
\label{lem_inversion_1}
Let $n\ge 3$,   $e_1=(1,0,\dots, 0)\in \R^n$, and let $f^{(m)}\in C_0(\R^{n-1};  S^m(\R^n))$ with some $m=1,2,3$.  
Assume that 
\begin{equation}
\label{eq_5_4}
\sum_{i_1,\dots, i_m=1}^n \int_{\R} t^m f^{(m)}_{i_1\dots i_m}( y_0'+t\eta')(e_1+i\eta)_{i_1}\dots (e_1+i\eta)_{i_m}dt=0,
\end{equation}
for  all $y_0'\in \R^{n-1}$  and all $\eta=(\eta_1,\eta')\in \R^n$  such that $e_1\cdot\eta =0$ and $|\eta|=1$. Then 
\[
f^{(m)}=\begin{cases} i_{\delta} v^{(m-2)}, & m=2,3,\\
0, & m=1,
\end{cases}
\] 
where 
\[
v^{(1)}=\frac{3}{n+2}j_\delta f^{(3)}\in C_0(\R^{n-1}; S^{1}(\R^n)), \quad v^{(0)}=\frac{1}{n}j_\delta f^{(2)}\in  C_0(\R^{n-1}; S^{0}(\R^n)).
\] 
\end{lem}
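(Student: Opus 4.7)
The plan is to reduce the hypothesis \eqref{eq_5_4} to a single vanishing of a combined $m$-th momentum ray transform on $\R^{n-1}$, and then apply Theorem~\ref{thm_Bhattacharyya_Krishnan_Sahoo} in ambient dimension $n-1$, which is permissible since $n\ge 3$. The assumptions $e_1\cdot\eta=0$ and $|\eta|=1$ force $\eta=(0,\eta')$ with $\eta'\in\mathbb{S}^{n-2}$, and consequently $(e_1+i\eta)_1=1$ while $(e_1+i\eta)_l=i\eta_l$ for $l\in\{2,\dots,n\}$. For $j=0,1,\dots,m$, let $h^{(j)}$ denote the symmetric $(m-j)$-tensor field on $\R^{n-1}$ with components $h^{(j)}_{l_1\dots l_{m-j}}(y_0'):=f^{(m)}_{i_1\dots i_m}(y_0')$, where $i_1=\cdots=i_j=1$ and $i_{j+1}=l_1,\dots,i_m=l_{m-j}$, for $l_1,\dots,l_{m-j}\in\{2,\dots,n\}\simeq\{1,\dots,n-1\}$. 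Pulling the $(e_1+i\eta)$ factors, which are $t$-independent, out of the integral in \eqref{eq_5_4}, and grouping indices by the number $j$ of entries equal to $1$ (using the symmetry of $f^{(m)}$), one recasts the hypothesis as
\begin{equation*}
\sum_{j=0}^m \binom{m}{j}\, i^{m-j}\, J^{m-j,m}h^{(j)}(y_0',\eta')=0,\qquad (y_0',\eta')\in\R^{n-1}\times\mathbb{S}^{n-2},
\end{equation*}
where $J^{m-j,m}$ denotes the momentum ray transform on $\R^{n-1}$.

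Setting $H:=\sum_{l=0}^m \binom{m}{l}\,i^l\,h^{(m-l)}\in C_0(\R^{n-1};\mathcal{S}^m(\R^{n-1}))$, the identity above is precisely the statement $J^{m,m}H=0$ in the direct-sum sense of $\mathcal{S}^m$. Theorem~\ref{thm_Bhattacharyya_Krishnan_Sahoo}, applied in dimension $n-1$, then yields: for $m=1$, case (iii) gives $h^{(0)}=h^{(1)}=0$, so $f^{(1)}\equiv 0$; for $m=2$, case (ii) gives $h^{(1)}=0$ and $h^{(0)}=i_\delta h^{(2)}$ on $\R^{n-1}$, i.e.\ $f^{(2)}_{1l}=0$ and $f^{(2)}_{l_1l_2}=f^{(2)}_{11}\delta_{l_1l_2}$ for all $l,l_1,l_2\in\{2,\dots,n\}$; for $m=3$, case (i) gives $h^{(1)}=\tfrac{1}{3}\,i_\delta h^{(3)}$ and $h^{(0)}=3\,i_\delta h^{(2)}$ on $\R^{n-1}$.

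It remains to reassemble these pieces into statements about $f^{(m)}$ as a tensor on $\R^n$. For $m=2$, setting $v^{(0)}:=f^{(2)}_{11}$ yields $f^{(2)}=i_\delta v^{(0)}$ on $\R^n$ directly, and applying $j_\delta$ together with \eqref{eq_5_3_1} (which gives $j_\delta i_\delta=n E$ on $S^0$) produces $v^{(0)}=\tfrac{1}{n}\, j_\delta f^{(2)}$. For $m=3$, set $v^{(1)}_1:=f^{(3)}_{111}$ and $v^{(1)}_l:=3\,f^{(3)}_{11l}$ for $l\ge 2$; a component-wise verification using both conclusions of case (i) shows $f^{(3)}=i_\delta v^{(1)}$ on $\R^n$, and \eqref{eq_5_3_1} (which gives $j_\delta i_\delta=\tfrac{n+2}{3}E$ on $S^1$) then yields $v^{(1)}=\tfrac{3}{n+2}\, j_\delta f^{(3)}$. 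The main technical point is the first step: recognizing that the binomial expansion of $(e_1+i\eta)^{\otimes m}$ along $1$-indices versus $\{2,\dots,n\}$-indices reassembles exactly as the direct-sum action of $J^{m,m}$ on $\mathcal{S}^m(\R^{n-1})$. Once this identification is made, the remainder of the proof reduces to algebraic bookkeeping with $i_\delta$ and $j_\delta$.
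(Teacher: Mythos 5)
Your proof is correct, and its core is the same as the paper's: you rewrite \eqref{eq_5_4}, using $\eta=(0,\eta')$ and the binomial grouping of indices equal to $1$, as the vanishing of $J^{m,m}$ applied to a graded field in $C_0(\R^{n-1};\mathcal{S}^m(\R^{n-1}))$, and then invoke Theorem~\ref{thm_Bhattacharyya_Krishnan_Sahoo} in dimension $n-1$ (legitimate since $n\ge 3$, and the theorem holds for continuous compactly supported fields). The only difference is the endgame: the paper first splits $f^{(m)}=g^{(m)}+i_\delta v^{(m-2)}$ into its trace-free and $i_\delta$ parts (via \eqref{eq_5_3_1}), observes that the $i_\delta$ part is annihilated by \eqref{eq_5_4} because $(e_1+i\eta)\cdot(e_1+i\eta)=0$, and then uses the componentwise relations together with $j_\delta g^{(m)}=0$ to force $g^{(m)}=0$; you instead skip the decomposition and the isotropy observation, reconstruct $f^{(m)}=i_\delta v^{(m-2)}$ directly from the componentwise conclusions of cases (i)--(iii) (your verification for $m=2,3$ checks out), and then identify $v^{(m-2)}$ with $\tfrac{3}{n+2}j_\delta f^{(3)}$, resp.\ $\tfrac1n j_\delta f^{(2)}$, by applying $j_\delta$ and \eqref{eq_5_3_1}. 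Both routes are equally rigorous; yours is marginally shorter since it needs neither the a priori decomposition nor the kernel observation, while the paper's organization makes explicit the structural fact that isotropic directions cannot detect $i_\delta$-parts, which is reused elsewhere in Section~\ref{sec_density}.
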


We can derive the following immediate consequence of Lemma \ref{lem_inversion_1}. 
\begin{cor}
\label{cor_inversion_1}
For $m=2$ or $3$, assuming that all the conditions of Lemma \ref{lem_inversion_1} are met, and if, additionally, $j_\delta f^{(m)}=0$, it follows that $f^{(m)}=0$.
\end{cor}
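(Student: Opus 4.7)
The corollary is essentially immediate from Lemma \ref{lem_inversion_1}; the plan is simply to read off the conclusion from the explicit quantitative identification of the trace $v^{(m-2)}$ provided by that lemma.

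In detail, for $m=3$ Lemma \ref{lem_inversion_1} yields $f^{(3)} = i_\delta v^{(1)}$ with $v^{(1)} = \frac{3}{n+2}\, j_\delta f^{(3)}$. The additional hypothesis $j_\delta f^{(3)} = 0$ then forces $v^{(1)} = 0$, so that $f^{(3)} = i_\delta(0) = 0$. The case $m=2$ is entirely analogous: one obtains $f^{(2)} = i_\delta v^{(0)}$ with $v^{(0)} = \frac{1}{n}\, j_\delta f^{(2)} = 0$, hence $f^{(2)} = 0$.

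There is no substantive obstacle in this argument, since the quantitative part of Lemma \ref{lem_inversion_1} has already expressed $v^{(m-2)}$ as a nonzero scalar multiple of $j_\delta f^{(m)}$. As a sanity check, one could derive the same conclusion self-contained from only the qualitative statement $f^{(m)} = i_\delta v^{(m-2)}$ by applying $j_\delta$ to both sides and invoking the commutation formula \eqref{eq_5_3_1}: since $j_\delta v^{(m-2)} = 0$ when $m-2 \in \{0,1\}$, this produces $j_\delta f^{(m)}$ as a nonzero multiple of $v^{(m-2)}$ (the scalar being nonzero for $n \geq 3$ and $m \in \{2,3\}$), and the vanishing hypothesis on the trace again gives $v^{(m-2)} = 0$ and hence $f^{(m)} = 0$.
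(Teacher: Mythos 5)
Your proof is correct and is exactly the argument the paper has in mind when it calls the corollary an "immediate consequence" of Lemma \ref{lem_inversion_1}: the lemma already expresses $v^{(m-2)}$ as an explicit nonzero multiple of $j_\delta f^{(m)}$, so the trace-free hypothesis kills $v^{(m-2)}$ and hence $f^{(m)} = i_\delta v^{(m-2)} = 0$. Your sanity check via the commutation formula \eqref{eq_5_3_1} is also a valid alternative derivation, though unnecessary given the explicit formula in the lemma.
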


In the proof of Theorem \ref{thm_density}, we will also require the following result, which addresses the injectivity of the generalized momentum ray transforms applied to a sum of tensor fields of order one and two, see \cite[Lemma 6.7]{Sahoo_Salo_2023} for a similar result. The proof of this result will be presented in Appendix \ref{sec_app_transforms_3}
\begin{lem}
\label{lem_inversion_2}
Let $n\ge 3$, $e_1=(1,0,\dots, 0)\in \R^n$, $f^{(2)}\in C_0(\R^{n-1}, S^2(\R^n))$, and $f^{(1)}\in C_0(\R^{n-1}, S^1(\R^n))$. 
Assume that 
\begin{equation}
\label{eq_5_25}
\begin{aligned}
\sum_{i_1,i_2=1}^n \int_{\R} t^{2+k} &f^{(2)}_{i_1 i_2}(y_0'+t\eta')(e_1+i\eta)_{i_1} (e_1+i\eta)_{i_2}dt\\
&+\sum_{i_1=1}^n \int_{\R} t^{1+k} f^{(1)}_{i_1}(y_0'+t\eta')(e_1+i\eta)_{i_1} dt
=0,
\end{aligned}
\end{equation}
for  all $y_0'\in \R^{n-1}$, all  $\eta=(\eta_1,\eta')\in \R^n$  such that $e_1\cdot\eta =0$ and $|\eta|=1$, and $k=0,1$. Then 
\[
f^{(2)}= i_{\delta} v^{(0)}, \quad f^{(1)}=0,
\] 
where $v^{(0)}= \frac{1}{n}j_\delta f^{(2)} \in C_0(\R^{n-1}, S^{0}(\R^n))$.  
\end{lem}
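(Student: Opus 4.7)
The plan is to rephrase \eqref{eq_5_25} as a system of momentum ray transform identities on $\R^{n-1}$, decouple it by the parity $\eta'\mapsto -\eta'$, bootstrap with the transport identity \eqref{eq_6_4}, and invert using the injectivity of $I^{m,m}$ from Theorem \ref{thm_higher_order_on}. Writing $\zeta := e_1 + i\eta$, the constraints $\eta\cdot e_1 = 0$, $|\eta|=1$ force $\eta=(0,\eta')$ with $|\eta'|=1$, so $\zeta_1 = 1$ and $\zeta_j = i\eta_j$ for $j\geq 2$. Expanding the complex contractions in \eqref{eq_5_25} yields
\begin{align*}
\sum_{i_1,i_2=1}^n f^{(2)}_{i_1 i_2}\zeta_{i_1}\zeta_{i_2} &= f^{(2)}_{11} + 2i\sum_{j=2}^n f^{(2)}_{1j}\eta_j - \sum_{j,k=2}^n f^{(2)}_{jk}\eta_j\eta_k,\\
\sum_{i_1=1}^n f^{(1)}_{i_1}\zeta_{i_1} &= f^{(1)}_1 + i\sum_{j=2}^n f^{(1)}_j\eta_j.
\end{align*}
Viewing $f^{(2)}_{11}$ and $f^{(1)}_1$ as scalar fields on $\R^{n-1}$, and introducing on $\R^{n-1}$ the $1$-tensors $h^{(1)}_j := f^{(2)}_{1j}$, $r^{(1)}_j := f^{(1)}_j$ ($j=2,\dots,n$), and the $2$-tensor $h^{(2)}_{jk} := f^{(2)}_{jk}$ ($j,k=2,\dots,n$), assumption \eqref{eq_5_25} becomes, for all $(y_0',\eta')\in\R^{n-1}\times\mathbb{S}^{n-2}$ and $k=0,1$,
\[
J^{0,2+k}(f^{(2)}_{11}) + 2iJ^{1,2+k}(h^{(1)}) - J^{2,2+k}(h^{(2)}) + J^{0,1+k}(f^{(1)}_1) + iJ^{1,1+k}(r^{(1)}) = 0,
\]
with all transforms taken on $\R^{n-1}$.

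Next I exploit the parity $J^{m,k}(g)(y_0',-\eta')=(-1)^{k+m}J^{m,k}(g)(y_0',\eta')$: substituting $-\eta'$ for $\eta'$ in the identity above and taking sums and differences with the original, the system decouples into two families (for $k=0,1$):
\begin{align*}
\text{Family (I):}\quad & J^{0,2+k}(f^{(2)}_{11}) - J^{2,2+k}(h^{(2)}) + iJ^{1,1+k}(r^{(1)}) = 0,\\
\text{Family (II):}\quad & 2iJ^{1,2+k}(h^{(1)}) + J^{0,1+k}(f^{(1)}_1) = 0.
\end{align*}

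I bootstrap these via $(\eta'\cdot\nabla_{y'})J^{m,k} = -kJ^{m,k-1}$, cf.\ \eqref{eq_6_4}. Applying the transport to Family (II) at $k=1$ and comparing with $k=0$ gives $iJ^{1,2}(h^{(1)}) = 0$, whence $J^{0,1}(f^{(1)}_1)=0$; differentiating once more produces $J^{1,1}(h^{(1)})=0$ and $J^{0,0}(f^{(1)}_1)=0$. Since $n-1\geq 2$, Theorem \ref{thm_higher_order_on} (applied on $\R^{n-1}$ with $m=1$ and $m=0$, respectively) forces $h^{(1)}=0$ and $f^{(1)}_1=0$. The same procedure applied to Family (I) gives $J^{1,1}(r^{(1)})=0$, hence $r^{(1)}=0$, along with the residual identity $J^{0,2}(f^{(2)}_{11}) = J^{2,2}(h^{(2)})$.

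Finally, since $|\eta'|=1$, identity \eqref{eq_201_2} on $\R^{n-1}$ reads $J^{2,2}(i_\delta F) = J^{0,2}(F)$ for any scalar field $F$, so the residual identity rewrites as $J^{2,2}(h^{(2)} - i_\delta f^{(2)}_{11}) = 0$; a last application of Theorem \ref{thm_higher_order_on} (with $m=2$) yields $h^{(2)} = i_\delta f^{(2)}_{11}$. Combined with $h^{(1)}=0$ (i.e.\ $f^{(2)}_{1j}=0$ for $j\geq 2$), this amounts exactly to $f^{(2)} = i_\delta v^{(0)}$ on $\R^n$ with $v^{(0)} := f^{(2)}_{11}$; tracing then gives $v^{(0)} = \frac{1}{n}j_\delta f^{(2)}$. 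The main obstacle is the simultaneous bookkeeping of tensor ranks, weights $t^{1+k},t^{2+k}$, and parity in $\eta'$, since the unknowns $f^{(2)}_{11}, h^{(1)}, h^{(2)}, f^{(1)}_1, r^{(1)}$ appear intertwined in the initial identity; once the decoupling and bootstrap are organized cleanly, the injectivity of $I^{m,m}$ closes the argument.
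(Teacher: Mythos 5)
Your proof is correct, and it takes a genuinely different route from the paper's.

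The paper begins by decomposing $f^{(2)}=g^{(2)}+i_\delta v^{(0)}$ with $j_\delta g^{(2)}=0$, observes that the $i_\delta v^{(0)}$ part drops out of \eqref{eq_5_25} because $(e_1+i\eta)\cdot(e_1+i\eta)=0$, and then bundles the remaining Euclidean components into mixed-rank objects $G\in C_0(\R^{n-1};\mathcal{S}^2(\R^{n-1}))$ and $F\in C_0(\R^{n-1};\mathcal{S}^1(\R^{n-1}))$ so that the hypothesis reads $J^{2,2+k}G+J^{1,1+k}F=0$. After one application of the transport identity it forms a $2\times 2$ linear system in $J^{2,2}G$ and $J^{1,1}F$, concludes $J^{2,2}G=J^{1,1}F=0$, and finishes with the kernel characterization Theorem \ref{thm_Bhattacharyya_Krishnan_Sahoo} together with the trace-free constraint on $g^{(2)}$ (the latter via the $m=2$ case of Lemma \ref{lem_inversion_1}). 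Your proof instead skips the trace-free decomposition, decouples the componentwise identity at the outset via the parity $\eta'\mapsto-\eta'$, and then bootstraps each family down to $J^{m,m}$ level by repeated applications of the transport identity, closing each with the basic injectivity of $I^{m,m}$ (Theorem \ref{thm_higher_order_on}) rather than the kernel characterization theorem. The two routes use the same ingredients under the hood — indeed, the parity split you perform is exactly what is encoded inside the proof of Theorem \ref{thm_Bhattacharyya_Krishnan_Sahoo} — so your argument essentially inlines that theorem. What your version buys is avoidance of both the trace-free decomposition and the $J^{m,m}$-kernel theorem, at the modest price of more applications of the transport identity and a slightly longer elimination. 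One small point of rigor worth flagging, which the paper does handle explicitly and you elide: the transport identity \eqref{eq_6_4} is a priori a distributional statement, and after each application one should note (as in the passage from \eqref{eq_5_29} to \eqref{eq_5_29_new_cont}) that the resulting expression is continuous, hence vanishes pointwise, before the next differentiation or before invoking injectivity. Filling in that remark at each bootstrap step makes your argument airtight.
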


We have the following immediate consequence of Lemma \ref{lem_inversion_2}. 
\begin{cor}
\label{cor_inversion_2}
Assuming that all the conditions of Lemma \ref{lem_inversion_2} are met, and additionally, if $j_\delta f^{(2)}=0$, then we have that both $f^{(2)}=0$ and $f^{(1)}=0$.
\end{cor}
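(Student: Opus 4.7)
The plan is to deduce Corollary \ref{cor_inversion_2} as an immediate consequence of Lemma \ref{lem_inversion_2}, without any additional work beyond substituting the extra hypothesis $j_\delta f^{(2)}=0$ into the conclusion of that lemma. Concretely, the hypothesis \eqref{eq_5_25} on the pair $(f^{(2)},f^{(1)})$ already matches the hypothesis of Lemma \ref{lem_inversion_2}, so that lemma applies verbatim and yields $f^{(1)}=0$ together with $f^{(2)}=i_\delta v^{(0)}$, where $v^{(0)}=\frac{1}{n}j_\delta f^{(2)}\in C_0(\R^{n-1},S^{0}(\R^n))$.

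The crucial point is that Lemma \ref{lem_inversion_2} provides $v^{(0)}$ explicitly as the (rescaled) trace of $f^{(2)}$, rather than merely asserting the existence of some $v^{(0)}$. This explicitness is what makes the additional hypothesis $j_\delta f^{(2)}=0$ bite directly: substituting into the formula for $v^{(0)}$ gives $v^{(0)}=\frac{1}{n}\cdot 0=0$, hence $f^{(2)}=i_\delta\, 0=0$. The vanishing of $f^{(1)}$ was already obtained in the first step.

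There is no real obstacle to overcome; the entire analytic content (inverting the generalized momentum ray transform on a sum of $1$- and $2$-tensor fields) has been absorbed into Lemma \ref{lem_inversion_2}. The role of Corollary \ref{cor_inversion_2} is only to isolate the case where the natural kernel of the transform — symmetric multiples of the metric by a scalar via $i_\delta$ — is excluded a priori by the trace-free condition, giving full injectivity on the trace-free part. For this reason, the proof can be written as a two-line deduction, and no Carleman estimate, tensor decomposition, or further use of the commutation formula \eqref{eq_5_3_1} is required at this stage.
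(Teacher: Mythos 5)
Your deduction is correct and is exactly what the paper intends when it calls the corollary an ``immediate consequence'' of Lemma \ref{lem_inversion_2}: since the lemma gives $f^{(1)}=0$ and $f^{(2)}=i_\delta v^{(0)}$ with $v^{(0)}=\frac{1}{n}j_\delta f^{(2)}$ explicitly, the extra hypothesis $j_\delta f^{(2)}=0$ forces $v^{(0)}=0$ and hence $f^{(2)}=0$. No further argument is needed, and your observation that the explicit formula for $v^{(0)}$ is what makes the trace-free condition bite is precisely the point.
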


\section{Construction of biharmonic functions}

\label{sec_CGO_solutions}

Let $\Omega\subset \R^n$, $n\ge 3$, be a bounded open set with $C^\infty$ boundary.  In this section, we aim to construct complex geometric optics (CGO) solutions to the biharmonic equation
\begin{equation}
\label{eq_3_1}
\Delta^2 u = 0 \quad \text{in} \quad \Omega,
\end{equation}
which are necessary for proving Theorem \ref{thm_density}. Although the construction follows a standard approach as outlined in \cite{Krup_Lassas_Uhlmann_2012, Krup_Lassas_Uhlmann_2014, Bhattacharyya_Krishnan_Sahoo_2021}, we present it here to address the need for solutions with enhanced regularity. Specifically, the constructed CGO solutions should belong to $C^{4,\alpha}(\overline{\Omega})$ rather than the commonly used $H^4(\Omega)$, see the integral identity \eqref{int_eq_6} as well as \eqref{eq_2_5} and \eqref{eq_2_13} below. Additionally, since the integral identity \eqref{int_eq_6} in Theorem \ref{thm_density} involves products of three biharmonic functions and their derivatives up to the third order, it is advantageous to estimate the norms of the derivatives of the remainders up to the third order in the $L^\infty$-norm. Therefore, we need to construct CGO solutions that exhibit small remainders measured in the $C^3(\overline{\Omega})$-norm instead of the commonly used $H^4_{\text{scl}}(\Omega)$-norm in existing constructions. Consequently, we proceed with the construction of CGO solutions to \eqref{eq_3_1} that possess $C^5(\overline{\Omega})$ regularity, accompanied by small remainders in $C^5(\overline{\Omega})$.

Our starting point is the following Carleman estimate for the semiclassical Laplacian, which is established in \cite{Kenig_Sjostrand_Uhlmann_2007}. 

\begin{prop}
Let $\alpha\in \R^n\setminus\{0\}$ and let $\varphi(x)=\alpha\cdot x$. Then given any $s\in \R$, we have for all $h>0$ small enough and all $u\in C^\infty_0(\Omega)$, 
\begin{equation}
\label{eq_3_2}
h\|u\|_{H^s_{\emph{\text{scl}}}(\R^n)}\le C\|e^{\frac{\varphi}{h}}(-h^2\Delta)e^{-\frac{\varphi}{h}} u\|_{H^s_{\emph{\textrm{scl}}}(\R^n)}, \quad C>0. 
\end{equation}
\end{prop}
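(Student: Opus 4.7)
The plan is to prove this Carleman estimate by Fourier analysis, taking advantage of the fact that the conjugated operator $P_\varphi := e^{\varphi/h}(-h^2\Delta)e^{-\varphi/h}$ has constant coefficients, since $\varphi(x)=\alpha\cdot x$ is linear. The first step is to reduce to the case $s=0$: as $P_\varphi$ is then a Fourier multiplier, it commutes with the semiclassical multiplier $\langle hD\rangle^s$, so the general case follows from applying $\langle hD\rangle^s$ to both sides. A direct computation of the conjugation yields
\[
P_\varphi u = -h^2\Delta u + 2h\alpha\cdot\nabla u - |\alpha|^2 u = Au + iBu,
\]
where $A := -h^2\Delta - |\alpha|^2$ and $B := -2ih\alpha\cdot\nabla$ are both self-adjoint on $L^2(\R^n)$.

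Since $A$ and $B$ are constant-coefficient differential operators, they commute, $[A,B]=0$. Expanding $\|P_\varphi u\|^2 = \|Au + iBu\|^2$ then shows the cross term $2\,\mathrm{Im}(Au, Bu)$ vanishes, because $(Au, Bu) = (u, ABu) = (u, BAu) = \overline{(Au, Bu)}$ is real by self-adjointness and the commutativity. By Plancherel,
\[
\|P_\varphi u\|_{L^2}^2 = \|Au\|_{L^2}^2 + \|Bu\|_{L^2}^2 = \int_{\R^n} |p(h\xi)|^2 |\hat u(\xi)|^2\, d\xi,
\]
where $|p(\eta)|^2 = (|\eta|^2 - |\alpha|^2)^2 + 4(\alpha\cdot\eta)^2$ vanishes precisely on the codimension-two sphere $S_\alpha = \{\eta : |\eta| = |\alpha|,\ \alpha\cdot\eta = 0\}$. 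Since $\nabla\mathrm{Re}\, p = 2\eta$ and $\nabla\mathrm{Im}\, p = 2\alpha$ are linearly independent at each point of $S_\alpha$ (as $\eta\perp\alpha$ there), one has $|p(\eta)|^2 \ge c\,\mathrm{dist}(\eta, S_\alpha)^2$ in a neighborhood of $S_\alpha$, and $|p(\eta)|^2 \ge c_0 > 0$ uniformly outside any such neighborhood.

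It remains to convert this Plancherel identity into the Carleman estimate $h^2\|u\|^2 \le C\|P_\varphi u\|^2$. Introducing a smooth partition of unity $\chi_1(h\xi) + \chi_2(h\xi) = 1$ in the rescaled Fourier variable, with $\chi_1$ localizing near $S_\alpha$, the piece on $\mathrm{supp}\,\chi_2(h\cdot)$ satisfies the stronger bound $\|\chi_2(hD)u\|^2 \le C\|P_\varphi u\|^2$ thanks to the uniform lower bound on $|p|^2$. On $\mathrm{supp}\,\chi_1(h\cdot)$, the quadratic bound rescaled to physical frequencies reads $|p(h\xi)|^2 \ge ch^2\,\mathrm{dist}(\xi, h^{-1}S_\alpha)^2$, which vanishes on the characteristic set $h^{-1}S_\alpha$ itself. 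Here the compact support of $u$ in the bounded domain $\Omega$ enters: by Paley--Wiener, $\hat u$ is entire of exponential type $R := \mathrm{diam}(\Omega)$, yielding the uniform Lipschitz control $|\hat u(\xi) - \hat u(\xi')| \le C_R \|u\|_{L^2}|\xi - \xi'|$; in particular, $\hat u$ cannot concentrate on the lower-dimensional surface $h^{-1}S_\alpha$ at scales finer than $1/R$. Combining this regularity of $\hat u$ with the quadratic vanishing of $|p|^2$ transverse to $S_\alpha$ yields $h^2\|\chi_1(hD)u\|^2 \le C_R\|P_\varphi u\|^2$, completing the proof.

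The main obstacle is precisely this last concentration estimate near the characteristic set: pointwise Fourier control of the symbol fails on $h^{-1}S_\alpha$, and only the boundedness of $\Omega$ prevents $\hat u$ from fully concentrating there. This step is essentially a semiclassical G\aa rding-type inequality specific to the constant-coefficient case, and it is what distinguishes the linear-weight Carleman estimate from a purely algebraic symbol bound; it is also the reason one obtains exactly the $h$-loss appearing in \eqref{eq_3_2}.
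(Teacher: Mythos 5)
The paper does not prove this proposition; it simply cites Kenig--Sj\"ostrand--Uhlmann \cite{Kenig_Sjostrand_Uhlmann_2007}, where the estimate is obtained (for general limiting Carleman weights) via a convexified weight and integration by parts. Your approach is therefore necessarily different, and its first step is both correct and promising: since $\varphi$ is linear, $P_\varphi = A + iB$ with $A = -h^2\Delta - |\alpha|^2$ and $B = -2ih\,\alpha\cdot\nabla$ self-adjoint and commuting, so
\begin{equation*}
\|P_\varphi u\|_{L^2}^2 = \|Au\|_{L^2}^2 + \|Bu\|_{L^2}^2.
\end{equation*}
At this point, however, you abandon the decomposition and attempt a delicate Fourier-side estimate near the characteristic set. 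That step has a genuine gap. The inequality you need there, namely that $\int d(\xi,h^{-1}S_\alpha)^2\,|\hat u|^2\,d\xi$ controls $\int |\hat u|^2\,d\xi$ on a neighborhood of $h^{-1}S_\alpha$ with a constant independent of $h$, simply does not follow from the Lipschitz bound $|\nabla\hat u|\le C_R\|u\|_{L^2}$. Lipschitz control of $\hat u$ permits concentration of the $L^2$-mass of $\hat u$ in a tube of radius $\varepsilon(h)\to 0$ around $h^{-1}S_\alpha$ (e.g.\ $\varepsilon\sim h^{(n-2)/4}$ is compatible with the Lipschitz bound in dimension $n\ge 3$), which would destroy the required ratio. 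What you actually need is the full strength of compact support -- a Poincar\'e-type inequality or H\"ormander's comparison theorem for constant-coefficient operators -- and neither of these is a consequence of the Lipschitz estimate alone. The sentence ``Combining this regularity of $\hat u$ with the quadratic vanishing of $|p|^2$ $\ldots$ yields $h^2\|\chi_1(hD)u\|^2\le C_R\|P_\varphi u\|^2$'' asserts the conclusion without an argument; this is precisely the content of the lemma and must be supplied.

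Fortunately, your own decomposition gives a much shorter route that avoids the Fourier analysis entirely. Discard $\|Au\|^2$ and keep only $\|Bu\|^2 = 4h^2\|\alpha\cdot\nabla u\|_{L^2}^2$. Since $u\in C_0^\infty(\Omega)$ and $\Omega$ is bounded, the one-dimensional Poincar\'e inequality in the direction $\alpha/|\alpha|$ gives $\|u\|_{L^2}\le \frac{R}{|\alpha|}\|\alpha\cdot\nabla u\|_{L^2}$ with $R=\operatorname{diam}(\Omega)$, hence
\begin{equation*}
\|P_\varphi u\|_{L^2}\;\ge\;\|Bu\|_{L^2}\;=\;2h\|\alpha\cdot\nabla u\|_{L^2}\;\ge\;\frac{2h|\alpha|}{R}\,\|u\|_{L^2},
\end{equation*}
which is exactly \eqref{eq_3_2} for $s=0$ with $C=R/(2|\alpha|)$. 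A secondary issue is your reduction to $s=0$: applying the $L^2$ estimate to $v=\langle hD\rangle^s u$ is not licit as stated, because $v$ is no longer compactly supported, while your $s=0$ argument (whether via Poincar\'e or via Paley--Wiener) uses compact support essentially. For integer $s\ge 0$ this can be repaired by commuting $D^\beta$ with $B$ and applying Poincar\'e to $D^\beta u$, and the general case then follows by interpolation or a pseudolocality argument, but this needs to be said.
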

Here $H^s(\R^n)$, $s\in \R$, is the standard Sobolev space, equipped with the semiclassical norm $\|u\|_{H^s_{\text{scl}}(\R^n)}=\|\langle hD\rangle^s u \|_{L^2(\R^n)}$, where $\langle \xi\rangle =(1+|\xi|^2)^{1/2}$.  

Iterating the Carleman estimate \eqref{eq_3_2} twice, we obtain the following Carleman estimate for the biharmonic operator, 
\begin{equation}
\label{eq_3_3}
h^2\|u\|_{H^s_{\text{scl}}(\R^n)}\le C\|e^{\frac{\varphi}{h}}(-h^2\Delta)^2e^{-\frac{\varphi}{h}} u\|_{H^s_{\textrm{scl}}(\R^n)}, \quad C>0, 
\end{equation}
for all $u\in C^\infty_0(\Omega)$ and $h>0$ small enough. 

Using a standard argument, we convert the Carleman estimate \eqref{eq_3_3} into the following solvability result, see 
\cite[Proposition 2.3]{Krup_Uhlmann_2014}. 

\begin{prop}
\label{prop_solvability}
Let $\alpha\in \R^n\setminus\{0\}$,  let $\varphi(x)=\alpha\cdot x$, and let $s\in \R$. If $h>0$ is small enough, then for any $v\in H^s(\Omega)$, there is a solution $u\in H^s(\Omega)$ of the equation 
\[
e^{\frac{\varphi}{h}}(-h^2\Delta)^2e^{-\frac{\varphi}{h}} u=v\quad \text{in}\quad \Omega,
\]
which satisfies the bound
\[
\|u\|_{H^s_{\emph{\text{scl}}}(\Omega)}\le \frac{C}{h^2}\|v\|_{H^s_{\emph{\text{scl}}}(\Omega)}.
\]
\end{prop}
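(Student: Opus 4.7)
The plan is to convert the Carleman estimate \eqref{eq_3_3} into the desired solvability statement by a standard Hahn--Banach duality argument. Denote $P = e^{\varphi/h}(-h^2\Delta)^2 e^{-\varphi/h}$, so its formal $L^2$-adjoint is $P^* = e^{-\varphi/h}(-h^2\Delta)^2 e^{\varphi/h}$, which has exactly the same structure as $P$ but with $\varphi$ replaced by $-\varphi$. Since the Carleman estimate \eqref{eq_3_3} holds for any nonzero linear weight and any real Sobolev exponent, applying it with $\varphi \mapsto -\varphi$ and $s \mapsto -s$ gives
\begin{equation*}
h^2\|\psi\|_{H^{-s}_{\text{scl}}(\R^n)} \le C\,\|P^*\psi\|_{H^{-s}_{\text{scl}}(\R^n)}
\end{equation*}
for all $\psi \in C^\infty_0(\Omega)$ and all $h>0$ small enough.

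Next, I would consider the linear subspace $\cL = \{P^*\psi : \psi \in C^\infty_0(\Omega)\}$ of $H^{-s}_{\text{scl}}(\R^n)$ and define on it the functional
\begin{equation*}
L(P^*\psi) := \langle v, \psi\rangle_{\Omega},
\end{equation*}
where $\langle\cdot,\cdot\rangle_\Omega$ is the $H^s(\Omega)$--$H^{-s}_0(\Omega)$ duality pairing extending the $L^2$ pairing. The adjoint Carleman estimate guarantees that $P^*$ is injective on $C^\infty_0(\Omega)$, so $L$ is well defined on $\cL$. Moreover, using the duality estimate together with the Carleman bound,
\begin{equation*}
|L(P^*\psi)| \le \|v\|_{H^s_{\text{scl}}(\Omega)}\,\|\psi\|_{H^{-s}_{\text{scl}}(\R^n)}
\le \frac{C}{h^2}\,\|v\|_{H^s_{\text{scl}}(\Omega)}\,\|P^*\psi\|_{H^{-s}_{\text{scl}}(\R^n)},
\end{equation*}
so $L$ is bounded on $\cL$ with norm at most $C h^{-2}\|v\|_{H^s_{\text{scl}}(\Omega)}$.

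By the Hahn--Banach theorem, $L$ extends to a bounded linear functional on $H^{-s}_{\text{scl}}(\R^n)$ with the same norm. The Riesz identification of the dual of $H^{-s}_{\text{scl}}(\R^n)$ with $H^s_{\text{scl}}(\R^n)$ then produces $u \in H^s(\R^n)$ with $\|u\|_{H^s_{\text{scl}}(\R^n)} \le C h^{-2}\|v\|_{H^s_{\text{scl}}(\Omega)}$ such that
\begin{equation*}
\langle u, P^*\psi\rangle = \langle v,\psi\rangle_\Omega \quad \text{for all }\psi \in C^\infty_0(\Omega).
\end{equation*}
This identity says precisely that $Pu = v$ in $\cD'(\Omega)$, and restricting $u$ to $\Omega$ yields the desired solution with the stated bound.

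The main technical subtlety, rather than a genuine obstacle, is matching the semiclassical norms under duality: one must be careful that the $H^{-s}_{\text{scl}}(\R^n)$ norm used in the Carleman estimate for $P^*$ is exactly the dual norm of $H^s_{\text{scl}}(\R^n)$ so that the bound on $L$ transfers via Hahn--Banach without losing factors of $h$. Once that pairing is set up correctly and $P^*$ is shown to preserve the $C^\infty_0(\Omega)$ class (which is immediate since multiplication by $e^{\pm\varphi/h}$ is smooth), the rest of the argument is a direct adaptation of the solvability scheme from \cite{Krup_Uhlmann_2014}.
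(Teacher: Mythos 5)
Your proof is correct and follows essentially the same route as the paper, which simply invokes the standard duality argument of \cite[Proposition 2.3]{Krup_Uhlmann_2014}: apply the Carleman estimate \eqref{eq_3_3} to the adjoint $e^{-\varphi/h}(-h^2\Delta)^2e^{\varphi/h}$ in $H^{-s}_{\text{scl}}$, define the functional on the range, and conclude by Hahn--Banach and duality. No gaps.
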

Here 
\[
H^s(\Omega)=\{V|_{\Omega}: V\in H^s(\R^n)\}, \quad s\in \R,
\]
equipped with the norm
\[
\|v\|_{H^s_{\text{scl}}(\Omega)}=\inf_{V\in H^s(\R^n), v=V|_{\Omega}}\|V\|_{H^s_{\text{scl}}(\R^n)}. 
\]

Let $l\in \N$. We shall construct CGO solutions to the equation \eqref{eq_3_1} in the form
\begin{equation}
\label{eq_3_4}
u(x;h)=e^{\frac{x\cdot \zeta}{h}}(a(x; h)+r(x; h)),
\end{equation}
where $\zeta\in \C^n$ such that $\zeta\cdot\zeta=0$, $|\text{Re}\, \zeta|=|\text{Im}\, \zeta|\sim 1$, the amplitude $a\in C^\infty(\overline{\Omega})$, and the remainder satisfies $\|r\|_{H^l_{\text{scl}}(\Omega)}\to 0$ as $h\to 0$. 

Now $u$ given by \eqref{eq_3_4} is a solution to \eqref{eq_3_1} provided that 
\begin{equation}
\label{eq_3_5}
e^{-\frac{x\cdot \zeta}{h}} (-h^2\Delta)^2 e^{\frac{x\cdot \zeta}{h}} r= - (-h^2\Delta-2\zeta\cdot h\nabla)^2a \quad \text{in}\quad \Omega. 
\end{equation}
Letting 
\[
T=\zeta\cdot\nabla,
\]
\eqref{eq_3_5} can be rewritten as follows, 
\begin{equation}
\label{eq_3_6}
\begin{aligned}
e^{-\frac{x\cdot \text{Re}\, \zeta}{h}} (-h^2\Delta)^2 e^{\frac{x\cdot \zeta}{h}} r&= - e^{\frac{i x\cdot \text{Im}\, \zeta}{h}}(h^2\Delta+2h T)^2a\\
&= - e^{\frac{i x\cdot \text{Im}\, \zeta}{h}}(h^4\Delta^2+ 4h^3 T\Delta+ 4h^2T^2)a  \quad \text{in}\quad \Omega. 
\end{aligned}
\end{equation}
We shall look for the amplitude in the form,
\[
a(x;h)=a_0(x)+ha_1(x)+\dots+ h^ka_k(x),
\]
for some $k\in \N$, where $a_0\in C^\infty(\R^n)$ satisfies the first transport equation,
\begin{equation}
\label{eq_3_7}
T^2 a_0= 0\quad \text{in}\quad \R^n,
\end{equation}
and $a_j\in C^\infty(\overline{\Omega})$ solves the $j$th transport equation, 
\begin{equation}
\label{eq_3_8} 
T^2a_j=-T\Delta a_{j-1}-\frac{1}{4}\Delta^2 a_{j-2} \quad \text{in}\quad \Omega,
\end{equation}
$j=1,2,\dots, k$. Here we set $a_{-1}=0$.

In the proof of Theorem \ref{thm_density}  we shall need CGO solutions  \eqref{eq_3_1}  of the form \eqref{eq_3_4} with $\zeta=\mu(e_1+i\eta_2)$, where $0\ne \mu\in \R$. Here $e_1=(1,0,\dots, 0)\in \R^n$ and $\eta_2\in \R^n$ is such that $e_1\cdot\eta_2=0$ and $|\eta_2|=1$. In this case, the first transport equation \eqref{eq_3_7} becomes 
\begin{equation}
\label{eq_3_9}
((e_1+i\eta_2)\cdot \nabla)^2 a_0= 0 \quad \text{in}\quad \R^n. 
\end{equation}
We shall proceed to solve \eqref{eq_3_9}. To that end, we complete $e_1$ and $\eta_2$ to an orthonormal basis in $\R^n$ and denote it by $\{e_1,\eta_2,\eta_3, \dots, \eta_n\}$. We denote by $y=(y_1,y_2,\dots, y_n)$, $y_1=x_1$, the coordinates with respect to this basis. In these coordinates \eqref{eq_3_9} becomes
\begin{equation}
\label{eq_3_10}
(\p_{y_1}+i\p_{y_2})^2 a_0= 0.
\end{equation}
Letting $z=y_1+iy_2$ be a complex variable, $\p_{\bar z}=\frac{1}{2}(\p_{y_1}+i\p_{y_2})$, and making a change of coordinates $y\mapsto (z,y'')$, $y''=(y_3,\dots, y_n)$,  the equation \eqref{eq_3_10} reduces to 
\begin{equation}
\label{eq_3_11}
\p_{\bar z}^2 a_0= 0.
\end{equation}
Functions $a_0(z)$ that satisfy \eqref{eq_3_11} are called polyanalytic of order two, see 
\cite{Balk_Zuev_1970}. One can easily see that any solution of \eqref{eq_3_11} in $\C$ has the form,
\[
a_0(z)=\bar z f_1(z)+f_2(z),
\]
where $f_1$ and $f_2$ are holomorphic in $\C$, see \cite[Page 203]{Balk_Zuev_1970}. Thus, in particular, the function
\begin{equation}
\label{eq_3_11_1}
\big((2i)^{-1}(z-\bar z)-c\big)^k f(z) g(y''),
\end{equation}
satisfies \eqref{eq_3_11}. Here $k=0,1$, $c\in \R$,  $f$ is holomorphic in $\C$, and  $g\in C^\infty(\R^{n-2})$. In view of \eqref{eq_3_11_1}, in the proof of Theorem \ref{thm_density} we shall work with solutions $a_0\in C^\infty(\R^n)$ to \eqref{eq_3_10} in the form, 
\begin{equation}
\label{eq_3_12}
a_0(y_1,y_2,y'')=(y_2-c)^k f(y_1+iy_2) g(y'').
\end{equation}
The particular form \eqref{eq_3_12} of solutions $a_0$ of the first transport equation \eqref{eq_3_10} will be helpful to get the generalized momentum ray transforms of unknown perturbations of $\Delta^2$, see \cite{Bhattacharyya_Krishnan_Sahoo_2021, Sahoo_Salo_2023} for the same choice of the amplitude.

We shall now proceed to solve the $j$th transport equation \eqref{eq_3_8}, assuming that we have already found $a_0, a_1, \dots, a_{j-1} \in C^\infty(\overline{\Omega})$ for $j=1, \dots, k$.
Making the change  of coordinates $x\mapsto (z,y'')$,  \eqref{eq_3_8} becomes
\begin{equation}
\label{eq_3_13}
\p_{\bar z}^2 a_j=f \quad \text{in}\quad \Omega,
\end{equation}
where $f\in C^\infty(\overline{\Omega})$ is given.  As in \cite{Krup_Lassas_Uhlmann_2012}, in order to solve \eqref{eq_3_13} we first find $v\in C^\infty(\overline{\Omega})$ which satisfies 
\begin{equation}
\label{eq_3_14}
\p_{\bar z} v=f \quad \text{in}\quad \Omega,
\end{equation}
and then solve 
\begin{equation}
\label{eq_3_15}
\p_{\bar z} a_j=v \quad \text{in}\quad \Omega.
\end{equation}
Note that \eqref{eq_3_14} can be solved by applying the Cauchy transform, i.e. 
\[
v(z,y'')=\frac{1}{\pi}\int_{\C}\frac{\chi(z-\zeta,y'')f(z-\zeta,y'')}{\zeta}d\text{Re}\,\zeta d\text{Im}\,\zeta, 
\]
where $\chi\in C^\infty_0(\R^n)$ is such that $\chi=1$ near $\overline{\Omega}$. The equation \eqref{eq_3_15} can be solved in a similar way and therefore, \eqref{eq_3_13} and hence, \eqref{eq_3_8} are solvable globally near $\overline{\Omega}$ with a solution $a_j\in C^\infty(\overline{\Omega})$.  

Having chosen the amplitudes $a_0, a_1,\dots, a_k\in C^\infty(\overline{\Omega})$, we get from \eqref{eq_3_6} that 
\begin{equation}
\label{eq_3_16}
e^{-\frac{\mu x_1}{h}} (-h^2\Delta)^2 e^{\frac{\mu x_1}{h}} (e^{\frac{i \mu x\cdot \eta_2}{h}} r)= - e^{\frac{i \mu x\cdot \eta_2}{h}}h^{k+3} (h\Delta^2 a_k+ 4T\Delta a_k)  \quad \text{in}\quad \Omega. 
\end{equation}

Let  $l\in \N$. Then Proposition \ref{prop_solvability} implies that for all $h>0$ small enough,  there is $r\in H^l(\Omega)$ satisfying \eqref{eq_3_16} such that 
\[
\|r\|_{H^l_{\text{scl}}(\Omega)}= \mathcal{O}(h^{k+1}).
\]
Here we used that $\|r\|_{H^l_{\text{scl}}(\Omega)}^2\sim \sum_{|\alpha|\le l}\|(hD)^\alpha r\|^2_{L^2(\Omega)}$.  Summing up, we have the following result. 

\begin{prop}
\label{prop_CGO}
Let $\mu\in \R\setminus\{0\}$, $e_1=(1,0,\dots, 0)\in \R^n$,  and $\eta_2\in \R^n$ be such that $e_1\cdot\eta_2=0$ and $|\eta_2|=1$. Let $k, l\in \N$. Then for all $h>0$ small enough, there is $u\in H^l(\Omega)$ satisfying $\Delta^2 u=0$ in $\Omega$ of the form,
\[
u(x;h)=e^{\frac{\mu x\cdot (e_1+i\eta_2)}{h}}(a_0(x)+ha_1(x)+\dots+h^ka_k(x)+r(x; h)).
\]
Here $a_0\in C^\infty(\R^n)$ satisfies the first transport equation \eqref{eq_3_9}, and $a_j\in C^\infty(\overline{\Omega})$ satisfies the $j$th transport equation \eqref{eq_3_8},  $j=1,\dots, k$, and $r\in H^l(\Omega)$ such that $\|r\|_{H^l_{\text{scl}}(\Omega)}= \mathcal{O}(h^{k+1})$, as $h\to 0$. 
\end{prop}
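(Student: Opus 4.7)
The plan is to substitute the ansatz $u(x;h)=e^{\mu x\cdot(e_1+i\eta_2)/h}(a+r)$ with $a=a_0+ha_1+\cdots+h^k a_k$ into the biharmonic equation $\Delta^2 u=0$ and derive equations (3.7)--(3.8) for the amplitudes, leaving a residual equation for $r$ that is solvable via the semiclassical estimate of Proposition 3.1. Writing $\zeta=\mu(e_1+i\eta_2)$ and $T=\zeta\cdot\nabla$, and using $\zeta\cdot\zeta=0$ together with $|\mathrm{Re}\,\zeta|=|\mathrm{Im}\,\zeta|=|\mu|$, a direct conjugation calculation gives identity \eqref{eq_3_6}. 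Substituting $a=\sum_{j=0}^k h^j a_j$ and collecting powers of $h$, the coefficients of $h^2,\ldots,h^{k+2}$ vanish precisely when $a_0,\ldots,a_k$ solve the transport equations \eqref{eq_3_7}--\eqref{eq_3_8}, leaving only an $O(h^{k+3})$ forcing term $-e^{i\mu x\cdot\eta_2/h}h^{k+3}(h\Delta^2 a_k+4T\Delta a_k)$.

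The construction of the amplitudes proceeds by the change of variables $x\mapsto(z,y'')$ with $z=y_1+iy_2$ discussed above the statement, under which $T^2=4\mu^2\partial_{\bar z}^2$. I would choose $a_0$ in the explicit polyanalytic form \eqref{eq_3_12}, and then solve each subsequent transport equation $\partial_{\bar z}^2 a_j = f_j$ in $\Omega$, where $f_j\in C^\infty(\overline{\Omega})$ is determined by $a_{j-1},a_{j-2}$. To produce a smooth solution on $\overline{\Omega}$, I would factor this as two successive $\bar\partial$-problems \eqref{eq_3_14}--\eqref{eq_3_15} and invert each using a cut-off Cauchy transform against a cut-off $\chi\in C_0^\infty(\R^n)$ equal to $1$ near $\overline{\Omega}$; this yields $a_j\in C^\infty(\overline{\Omega})$ smoothly depending on $y''$ as a parameter. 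Iterating gives all $a_0,\ldots,a_k\in C^\infty(\overline{\Omega})$.

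With the amplitudes fixed, the remainder equation \eqref{eq_3_16} takes the form
\begin{equation*}
e^{-\mu x_1/h}(-h^2\Delta)^2 e^{\mu x_1/h}\bigl(e^{i\mu x\cdot\eta_2/h}r\bigr)=v,\qquad \|v\|_{H^l_{\mathrm{scl}}(\Omega)}=O(h^{k+3}),
\end{equation*}
since $e^{i\mu x\cdot\eta_2/h}$ is a unimodular oscillation whose derivatives produce at most $O(h^{-l})$ factors, compensated by the $h^{k+3}$ prefactor (and the amplitudes are smooth on $\overline{\Omega}$, hence bounded in $H^l_{\mathrm{scl}}$ uniformly in $h$). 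Applying Proposition 3.1 with Carleman weight $\varphi(x)=\mu x_1$ produces, for all sufficiently small $h>0$, a solution $e^{i\mu x\cdot\eta_2/h}r\in H^l(\Omega)$ with $H^l_{\mathrm{scl}}$-norm bounded by $h^{-2}\cdot O(h^{k+3})=O(h^{k+1})$, which transfers to the same bound on $r$ since multiplication by the unimodular oscillation is an isometry on $L^2$ and only costs bounded semiclassical factors in $H^l_{\mathrm{scl}}$. Undoing the change of variables, $u$ has the desired form and satisfies $\Delta^2 u=0$ in $\Omega$.

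The only delicate point is step two: verifying that the Cauchy transform iteration can be implemented so that the amplitudes $a_j$ are genuinely smooth up to $\overline{\Omega}$ and depend smoothly on the transverse variables $y''$; this is standard (see \cite{Krup_Lassas_Uhlmann_2012}) but must be invoked to guarantee the $C^\infty(\overline{\Omega})$ regularity claimed. Everything else is a bookkeeping exercise in matching powers of $h$ and applying the solvability result.
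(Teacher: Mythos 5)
Your proposal follows the paper's proof essentially verbatim: the same WKB ansatz and conjugation identity \eqref{eq_3_6}, the same transport equations \eqref{eq_3_7}--\eqref{eq_3_8} solved through the $\p_{\bar z}$-reduction and cut-off Cauchy transforms, and the same application of the solvability result (Proposition \ref{prop_solvability}, with its $h^{-2}$ loss) to the residual equation \eqref{eq_3_16}, handling the unimodular factor $e^{i\mu x\cdot\eta_2/h}$ exactly as the paper does. The only cosmetic slips are citing ``Proposition 3.1'' for what is the solvability statement and the sign of the Carleman weight (the paper's convention gives $\varphi=-\mu x_1$, which is immaterial since the result holds for any nonzero linear weight); neither affects the argument.
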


Now 
\[
\|r\|_{H^l(\Omega)}\le h^{-l}\|r\|_{H^l_{\text{scl}}(\Omega)}=\mathcal{O}(h^{k+1-l}).
\]
Taking $l>n/2+5$ and using the Sobolev embedding $H^l(\Omega)\subset C^5(\overline{\Omega})$, see \cite[Theorem 5.1.4]{Agranovich_2015}, we see that $r\in C^5(\overline{\Omega})$ with 
\begin{equation}
\label{eq_3_17}
\|r\|_{C^5(\overline{\Omega})}=\mathcal{O}(h^{k+1-l}),
\end{equation}
as $h\to 0$. Here 
\[
\|r\|_{C^5(\overline{\Omega})}=\sum_{|\alpha|\le 5}\sup_{\overline{\Omega}} |D^\alpha r|.
\]
Letting $k=l$, in view of \eqref{eq_3_17}, we get the following consequence of Proposition \ref{prop_CGO}. 
\begin{cor}
\label{cor_CGO}
Let $\mu\in \R\setminus\{0\}$, $e_1=(1,0,\dots, 0)\in \R^n$,  and $\eta_2\in \R^n$ be such that $e_1\cdot\eta_2=0$ and $|\eta_2|=1$. Then for all $h>0$ small enough, there is $u\in C^5(\overline{\Omega})$ satisfying $\Delta^2 u=0$ in $\Omega$ of the form,
\begin{equation}
\label{eq_cor_CGO-1}
u(x;h)=e^{\frac{\mu x\cdot (e_1+i\eta_2)}{h}}(a_0(x)+r(x; h)).
\end{equation}
Here $a_0\in C^\infty(\R^n)$ satisfies the first transport equation \eqref{eq_3_9}, and $r\in C^5(\overline{\Omega})$ such that $\|r\|_{C^5(\overline{\Omega})}= \mathcal{O}(h)$, as $h\to 0$. 
\end{cor}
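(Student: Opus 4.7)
The corollary follows from Proposition \ref{prop_CGO} together with a Sobolev embedding and a judicious choice of parameters. The plan is to select the regularity index $l$ and the amplitude truncation order $k$ in Proposition \ref{prop_CGO} so that both the semiclassical remainder estimate, when converted to the standard Sobolev norm and then embedded into $C^5(\overline{\Omega})$, remains small, and the extra amplitude terms $h^j a_j(x)$ for $j\ge 1$ can be absorbed into a new remainder still of size $O(h)$.

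Concretely, I would fix an integer $l > n/2 + 5$ and apply Proposition \ref{prop_CGO} with $k=l$. This yields, for all sufficiently small $h>0$, a function
\[
u(x;h) = e^{\frac{\mu x\cdot (e_1+i\eta_2)}{h}}\big(a_0(x) + h a_1(x) + \dots + h^{l} a_{l}(x) + \rho(x;h)\big)
\]
with $\Delta^2 u=0$ in $\Omega$, where $a_0 \in C^\infty(\R^n)$ solves the first transport equation \eqref{eq_3_9}, the $a_j \in C^\infty(\overline{\Omega})$ solve \eqref{eq_3_8}, and $\|\rho\|_{H^{l}_{\text{scl}}(\Omega)}=\mathcal{O}(h^{l+1})$. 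Since the semiclassical norm satisfies $\|\rho\|_{H^{l}_{\text{scl}}(\Omega)}^2 \sim \sum_{|\alpha|\le l} \|(hD)^\alpha \rho\|_{L^2(\Omega)}^2$, we have $\|\rho\|_{H^{l}(\Omega)} \le C h^{-l}\|\rho\|_{H^{l}_{\text{scl}}(\Omega)}=\mathcal{O}(h)$.

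By Sobolev embedding, the choice $l > n/2+5$ gives $H^{l}(\Omega) \hookrightarrow C^5(\overline{\Omega})$, so $\rho \in C^5(\overline{\Omega})$ with $\|\rho\|_{C^5(\overline{\Omega})}=\mathcal{O}(h)$. To match the exact form asserted in the corollary, I would then define
\[
r(x;h) := h a_1(x) + h^2 a_2(x) + \dots + h^{l} a_{l}(x) + \rho(x;h).
\]
Each $a_j$ is in $C^\infty(\overline{\Omega})$, hence its $C^5$ norm is a fixed finite constant, so $\|h^j a_j\|_{C^5(\overline{\Omega})} = \mathcal{O}(h)$ for $j=1,\dots,l$. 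Combined with the $C^5$ bound on $\rho$, this yields $r\in C^5(\overline{\Omega})$ with $\|r\|_{C^5(\overline{\Omega})}=\mathcal{O}(h)$, and the resulting $u$ has the desired form \eqref{eq_cor_CGO-1}.

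There is no real obstacle here; the argument is essentially already assembled in the paragraphs preceding the corollary. The only point requiring minor care is the semiclassical-to-standard norm conversion together with balancing $k$ and $l$ so that $k+1-l \ge 1$ while simultaneously keeping $l$ large enough for the Sobolev embedding into $C^5$; the choice $k=l$ with $l > n/2+5$ accomplishes both.
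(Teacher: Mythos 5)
Your proposal is correct and follows essentially the same route as the paper: take $k=l$ with $l>n/2+5$ in Proposition \ref{prop_CGO}, convert the semiclassical bound to the standard $H^l$ norm, use the Sobolev embedding $H^l(\Omega)\subset C^5(\overline{\Omega})$, and absorb the terms $h^ja_j$, $j\ge 1$, into the remainder. The only difference is that you make the absorption of the higher-order amplitudes into $r$ explicit, which the paper leaves implicit.
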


\begin{rem}
\label{rem_choice_amplitude}
In the proof of Theorem \ref{thm_density}, to obtain the generalized momentum ray transforms of unknown tensorial perturbations of $\Delta^2$, we shall work with $a_0$ in \eqref{eq_cor_CGO-1} given by 
\[
a_0(y_1,y_2,y'')=(y_2-c)^k f(z) g(y''),
\]
where $k=0,1$, $c\in\R$, $f$ is holomorphic in $\C$, and $g\in C^\infty(\R^{n-2})$,  see \eqref{eq_3_12}.  Here $y=(y_1,y_2,\dots, y_n)$, $y_1=x_1$,  are  the coordinates with respect to the orthonormal basis
$\{e_1,\eta_2,\eta_3, \dots, \eta_n\}$, and $z=y_1+iy_2$. We refer to 
\cite{Bhattacharyya_Krishnan_Sahoo_2021, Sahoo_Salo_2023} where a similar choice of the amplitude was made when solving inverse problems.  
\end{rem}

In the proof of Theorem \ref{thm_density}, to recover the Fourier transform of $q$, we will require the following CGO solutions to biharmonic equations, constructed in the same way as before.
\begin{prop}
\label{prop_CGO_q}
Let $\alpha, \beta\in \R^n$ be such that $\alpha\cdot\beta=0$ and $|\alpha|=|\beta|=1$. Then for all $h>0$ small enough, there is $u\in C^5(\overline{\Omega})$ satisfying $\Delta^2 u=0$ in $\Omega$ of the form,
\begin{equation}
\label{eq_cor_CGO-2}
u(x;h)=e^{\frac{ x\cdot (\alpha+i\beta)}{h}}(a_0(x)+r(x; h)),
\end{equation}
where  $a_0\in C^\infty(\R^n)$ satisfies $((\alpha+i\beta)\cdot \nabla)^2 a_0=0$ in $\R^n$. Here $\|r\|_{C^5(\overline{\Omega})}= \mathcal{O}(h)$, as $h\to 0$. 
\end{prop}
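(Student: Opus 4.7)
The plan is to repeat, essentially verbatim, the construction that produced Proposition \ref{prop_CGO} and Corollary \ref{cor_CGO}, with $\mu e_1$ replaced by $\alpha$ and $\eta_2$ replaced by $\beta$. The two structural ingredients we need are: (a) the Carleman estimate \eqref{eq_3_3} of \cite{Kenig_Sjostrand_Uhlmann_2007} and the solvability statement Proposition \ref{prop_solvability} both hold for the linear weight $\varphi(x)=\alpha\cdot x$ with \emph{any} nonzero $\alpha\in\R^n$, not just for $\alpha=\mu e_1$; and (b) the hypotheses $|\alpha|=|\beta|=1$ and $\alpha\cdot\beta=0$ guarantee that $\zeta:=\alpha+i\beta$ satisfies $\zeta\cdot\zeta=0$ with $|\mathrm{Re}\,\zeta|=|\mathrm{Im}\,\zeta|=1$, which is the only algebraic requirement on $\zeta$ used in the derivation of the transport equations \eqref{eq_3_7}--\eqref{eq_3_8}.

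Concretely, I would seek $u$ in the form $u(x;h)=e^{x\cdot\zeta/h}(a(x;h)+\tilde r(x;h))$ with enlarged amplitude
\begin{equation*}
a(x;h)=a_0(x)+h a_1(x)+\dots+h^k a_k(x),
\end{equation*}
where $a_0\in C^\infty(\R^n)$ solves $((\alpha+i\beta)\cdot\nabla)^2 a_0=0$ in $\R^n$ (the first transport equation) and the $a_j\in C^\infty(\overline{\Omega})$ are produced inductively by solving the $j$th transport equation \eqref{eq_3_8} with $T=\zeta\cdot\nabla$. The existence of $a_0$ is immediate (one may even take $a_0\equiv 1$, though for applications as in Remark \ref{rem_choice_amplitude} one uses the polyanalytic-of-order-two solutions \eqref{eq_3_12} in coordinates adapted to the orthonormal system $\{\alpha,\beta,\dots\}$). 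The existence of the $a_j$, $j\geq 1$, follows from the Cauchy transform argument after \eqref{eq_3_13}, which only uses the operator $\p_{\bar z}$ in the two-plane spanned by $\alpha$ and $\beta$ and is therefore insensitive to the specific choice of $\alpha,\beta$.

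With this amplitude, conjugation of $(-h^2\Delta)^2$ by $e^{x\cdot\zeta/h}$ reduces the equation $\Delta^2 u=0$ to
\begin{equation*}
e^{-x\cdot\alpha/h}(-h^2\Delta)^2 e^{x\cdot\alpha/h}\bigl(e^{ix\cdot\beta/h}\tilde r\bigr)=-e^{ix\cdot\beta/h}\,h^{k+3}\bigl(h\Delta^2 a_k+4T\Delta a_k\bigr)\quad\text{in }\Omega,
\end{equation*}
whose right-hand side has $H^l_{\mathrm{scl}}(\Omega)$-norm $\mathcal{O}(h^{k+3})$. Proposition \ref{prop_solvability} applied with $\varphi(x)=\alpha\cdot x$ then furnishes $\tilde r\in H^l(\Omega)$ with $\|\tilde r\|_{H^l_{\mathrm{scl}}(\Omega)}=\mathcal{O}(h^{k+1})$. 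Taking $l>n/2+5$ and $k=l$, the nonsemiclassical bound $\|\tilde r\|_{H^l(\Omega)}\le h^{-l}\|\tilde r\|_{H^l_{\mathrm{scl}}(\Omega)}=\mathcal{O}(h)$ combined with the Sobolev embedding $H^l(\Omega)\hookrightarrow C^5(\overline{\Omega})$ gives $\|\tilde r\|_{C^5(\overline{\Omega})}=\mathcal{O}(h)$.

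Finally, defining $r:=h a_1+\dots+h^k a_k+\tilde r$ absorbs the higher-order amplitudes into the remainder; since each $a_j\in C^\infty(\overline{\Omega})$ for $j\geq 1$, we have $\|h^j a_j\|_{C^5(\overline{\Omega})}=\mathcal{O}(h)$, so $\|r\|_{C^5(\overline{\Omega})}=\mathcal{O}(h)$, and the resulting $u$ has the required form \eqref{eq_cor_CGO-2}. I do not anticipate any genuine obstacles: every step is a direct transcription of the proof of Proposition \ref{prop_CGO} and Corollary \ref{cor_CGO}, and the only point deserving attention is the verification that the Carleman estimate and the Cauchy transform step make no use of the special role of $e_1$, which is clear from inspection.
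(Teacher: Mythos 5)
Your proposal is correct and follows exactly the paper's intended argument: Proposition \ref{prop_CGO_q} is obtained by rerunning the construction of Proposition \ref{prop_CGO} and Corollary \ref{cor_CGO} with $\zeta=\alpha+i\beta$, using that the Carleman estimate \eqref{eq_3_2}, Proposition \ref{prop_solvability}, and the Cauchy-transform solution of the transport equations depend only on $\varphi(x)=\alpha\cdot x$ being a nonzero linear weight and on $\zeta\cdot\zeta=0$, not on the special direction $e_1$. The absorption of $ha_1+\dots+h^k a_k$ into the remainder and the choice $l>n/2+5$, $k=l$ reproduce the paper's bound $\|r\|_{C^5(\overline{\Omega})}=\mathcal{O}(h)$, so there is nothing missing.
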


\section{Proof of Theorem \ref{thm_density}}

\label{sec_density}

We shall start to examine the integral identity \eqref{int_eq_6} by employing biharmonic functions constructed in Corollary \ref{cor_CGO}. To that end, following \cite[Theorem 1.1]{Bhattacharyya_Krishnan_Sahoo_2021}, we carefully select appropriate amplitudes as discussed in Remark \ref{rem_choice_amplitude}. This approach allows us to recover specific generalized momentum ray transforms on tensor fields and the sum of tensor fields of different ranks. Leveraging the injectivity results of Lemma \ref{lem_inversion_1} and Lemma \ref{lem_inversion_2} for the obtained generalized momentum ray transforms, we shall show that $A^{(j)}=0$ in $\Omega$, $j=1,2,3$. To demonstrate that $q=0$, we shall utilize the biharmonic functions from Proposition \ref{prop_CGO_q} to recover the Fourier transform of $q$. 

In doing so, we first extend $A^{(j)}$ by zero to $\R^n\setminus\overline{\Omega}$ and denote the extension again by $A^{(j)}$, $j=1,2,3$. We see that $A^{(j)}\in C_0(\R^n; S^j)$  as $A^{(j)}|_{\p \Omega}=0$, $j=1,2,3$. 
Let $e_1=(1,0,\dots, 0)\in \R^n$ and let $\eta_2\in \R^n$ be  such that $e_1\cdot\eta_2=0$ and $|\eta_2|=1$. Thus, $\eta_2=(0,\eta_2')$, $\eta_2'\in \R^{n-1}$, $|\eta_2'|=1$.  By Corollary \ref{cor_CGO}, there are $v^{(0)}, v^{(1)}, v^{(2)}\in C^{5}(\overline{\Omega})$ satisfying $(-\Delta)^2 v^{(l)}=0$ in $\Omega$ of the form 
\begin{equation}
\label{eq_4_1}
\begin{aligned}
v^{(0)}&=e^{\frac{-2x\cdot (e_1+i\eta_2)}{h}}(a_0^{(0)}(x)+r^{(0)}(x; h)),\\
v^{(1)}&=e^{\frac{x\cdot (e_1+i\eta_2)}{h}}(a_0^{(1)}(x)+r^{(1)}(x; h)),\quad v^{(2)}=v^{(1)}.
\end{aligned}
\end{equation}
Here $a_0^{(l)}\in C^\infty(\R^n)$ satisfies \eqref{eq_3_9},  and 
\begin{equation}
\label{eq_4_2}
\|r^{(l)}\|_{C^5(\overline{\Omega})}=\mathcal{O}(h),
\end{equation}
as $h\to 0$,  $l=0,1$.

Substituting $v^{(0)}, v^{(1)}, v^{(2)}$ given by \eqref{eq_4_1} in the integral identity \eqref{int_eq_6}, multiplying by $h^3$, letting $h\to 0$, and using \eqref{eq_4_2}, we obtain that 
\begin{equation}
\label{eq_4_3}
\sum_{i_1,i_2,i_3=1}^n \int_{\Omega} A^{(3)}_{i_1 i_2 i_3}(x) (e_1+i\eta_2)_{i_1}(e_1+i\eta_2)_{i_2}(e_1+i\eta_2)_{i_3} (a_0^{(1)}(x))^2 a_0^{(0)}(x)dx=0.
\end{equation}
Here $(e_1+i\eta_2)_{i_j}$ is the $i_j$th component of the vector $e_1+i\eta_2$ in the standard basis in $\R^n$. 
Now we have the following decomposition 
\begin{equation}
\label{eq_4_3_decom_new}
A^{(3)}=B^{(3)}+i_\delta W^{(1)},
\end{equation}
where $W^{(1)}\in C_0(\R^n; S^1)$, $B^{(3)}\in C_0(\R^n;S^3)$ is such that 
\begin{equation}
\label{eq_4_3_decom_new_2}
j_\delta B^{(3)}=0, 
\end{equation}
see \cite[Lemma 2.3]{Dairbekov_Sharafutdinov_2011} as well as  the beginning of the proof of Lemma \ref{lem_inversion_1} in Appendix \ref{sec_app_transforms_2}. Since  $(e_1+i\eta_2)\cdot(e_1+i\eta_2)=0$, we have 
\begin{equation}
\label{eq_4_16}
\sum_{i_1,i_2,i_3=1}^n (i_\delta W^{(1)})_{i_1 i_2 i_3} (e_1+i\eta_2)_{i_1}(e_1+i\eta_2)_{i_2}(e_1+i\eta_2)_{i_3}=0.
\end{equation}

Substituting \eqref{eq_4_3_decom_new} into \eqref{eq_4_3} and using \eqref{eq_4_16}, we get 
\begin{equation}
\label{eq_4_3_decom_new_3}
\sum_{i_1,i_2,i_3=1}^n \int_{\R^n} B^{(3)}_{i_1 i_2 i_3}(x) (e_1+i\eta_2)_{i_1}(e_1+i\eta_2)_{i_2}(e_1+i\eta_2)_{i_3} (a_0^{(1)}(x))^2 a_0^{(0)}(x)dx=0.
\end{equation}
We shall proceed to show that $B^{(3)}=0$. In doing so, we shall complete  $e_1$ and $\eta_2$ to an orthonormal basis $\{e_1,\eta_2,\eta_3, \dots, \eta_n\}$ in $\R^n$, and write $y=(x_1,y_2,y'')$, $y''=(y_3,\dots, y_n)$, for the coordinates with respect to this basis.  According to Remark \ref{rem_choice_amplitude}, we choose
\begin{equation}
\label{eq_4_5}
a_0^{(1)}(x_1,y_2,y'')=y_2-c, \quad a_0^{(0)}(x_1,t,y'')=(y_2-c)e^{-i\lambda(x_1+iy_2)}g(y''), 
\end{equation}
where $c\in\R$,  $\lambda\in \R$, and $g\in C^\infty(\R^{n-2})$. Writing out the integral in  \eqref{eq_4_3_decom_new_3}  in $y$--coordinates, and  substituting \eqref{eq_4_5}, we get 
\begin{equation}
\label{eq_4_6}
\begin{aligned}
\sum_{i_1,i_2,i_3=1}^n \int_{\R^n} B^{(3)}_{i_1 i_2 i_3}(x_1,y_2,y'') (e_1+i\eta_2)_{i_1}&(e_1+i\eta_2)_{i_2}(e_1+i\eta_2)_{i_3}\\
& (y_2-c)^3 e^{-i\lambda(x_1+iy_2)}g(y'') dx_1dy_2dy''=0.
\end{aligned}
\end{equation}
We get from \eqref{eq_4_6} that 
\begin{equation}
\label{eq_4_6_new_4_6}
\begin{aligned}
 \sum_{i_1,i_2,i_3=1}^n  \int_{\R^{n-1}} \hat {B^{(3)}_{i_1 i_2 i_3}}(\lambda,y_2,y'') (e_1+i\eta_2)_{i_1}&(e_1+i\eta_2)_{i_2}(e_1+i\eta_2)_{i_3}\\
& (y_2-c)^3 e^{\lambda y_2} g(y'') dy_2 dy''=0,
\end{aligned}
\end{equation}
for all $c\in\R$,  $\lambda\in \R$, and $g\in C^\infty(\R^{n-2})$. 
Here 
\[
\hat {B^{(3)}}(\lambda,y')=\int_{\R} e^{-i \lambda x_1}B^{(3)}(x_1,y')dx_1
\]
is the Fourier transform of $B^{(3)}$ with respect to $x_1$, and $y'=(y_2,\dots, y_n)\in \R^{n-1}$. We observe that the function $x_1\mapsto B^{(3)}(x_1,y')$ is of class  $C_0(\R; S^3(\R^n))$ for any $y'\in \R^{n-1}$. Furthermore, it can be easily verified that the function $y'\mapsto \hat {B^{(3)}}(\lambda,y')$ is of class $C_0(\R^{n-1}; S^3(\R^n))$ for any $\lambda\in \R$. Based on these observations, one can easily check that, for any $\lambda\in \R$ and $c\in \R$, the integral with respect to $y_2$ in \eqref{eq_4_6_new_4_6} is a continuous function of $y''\in\R^{n-2}$.  Now since $g\in C^\infty(\R^{n-2})$ is arbitrary, we conclude from \eqref{eq_4_6_new_4_6} that 
\begin{equation}
\label{eq_4_7}
\sum_{i_1,i_2,i_3=1}^n \int_{\R} \hat {B^{(3)}_{i_1 i_2 i_3}}(\lambda,y_2, y'') (e_1+i\eta_2)_{i_1}(e_1+i\eta_2)_{i_2}(e_1+i\eta_2)_{i_3} (y_2-c)^3 e^{\lambda y_2}dy_2=0,
\end{equation}
for all $y''\in \R^{n-1}$, all $c\in \R$, and all $\lambda\in \R$.  

Making the change of variables $t= y_2-c$ in the integral in \eqref{eq_4_7}, we obtain that
\begin{equation}
\label{eq_4_7_2}
\sum_{i_1,i_2,i_3=1}^n \int_{\R} t^3 \hat {B^{(3)}_{i_1 i_2 i_3}}(\lambda,y_0'+t\eta'_2) (e_1+i\eta_2)_{i_1}(e_1+i\eta_2)_{i_2}(e_1+i\eta_2)_{i_3}  e^{\lambda t}dt=0,
\end{equation}
for all $y_0'=(c,y'')\in \R^{n-1}$, and all $\lambda\in \R$.  Now, since the function $x_1\mapsto B^{(3)}(x_1,y')$ is of class $C_0(\R; S^{3}(\R^n))$ for any $y'\in \R^{n-1}$, we have that the function $\lambda\mapsto \hat {B^{(3)}}(\lambda,y')$ is of class $C^\infty(\R; S^3(\R^n))$ for any $y'\in \R^{n-1}$.  

Letting $\lambda=0$ in \eqref{eq_4_7_2}, we get 
\begin{equation}
\label{eq_4_8}
\sum_{i_1,i_2,i_3=1}^n \int_{\R}  t^3  \hat {B^{(3)}_{i_1 i_2 i_3}}(0,y_0'+t\eta_2') (e_1+i\eta_2)_{i_1}(e_1+i\eta_2)_{i_2}(e_1+i\eta_2)_{i_3} dt=0,
\end{equation}
for  all $y_0'\in \R^{n-1}$, and all $\eta_2\in \R^{n}$ such that $e_1\cdot \eta_2=0$ and $|\eta_2|=1$.
Here $\hat{B^{(3)}}(0, \cdot)\in C_0(\R^{n-1};S^3(\R^n))$ and $j_\delta  (\hat {B^{(3)}}(0,\cdot))=0$ in view of \eqref{eq_4_3_decom_new_2} and the fact that $j_\delta$ commutes with the Fourier transform.  An application of Corollary \ref{cor_inversion_1} to \eqref{eq_4_8} shows that 
\begin{equation}
\label{eq_4_9}
\hat {B^{(3)}}(0,\cdot)=0.
\end{equation} 

Differentiation  \eqref{eq_4_7_2} with respect to $\lambda$,  letting $\lambda=0$, and using \eqref{eq_4_9}, we obtain that 
\begin{equation}
\label{eq_4_10}
\begin{aligned}
\sum_{i_1,i_2,i_3=1}^n& \int_{\R} t^3 \p_{\lambda} \hat {B^{(3)}_{i_1 i_2 i_3}}(0,y_0'+t\eta') (e_1+i\eta_2)_{i_1}(e_1+i\eta_2)_{i_2}(e_1+i\eta_2)_{i_3}  dt=0,
\end{aligned}
\end{equation}
for  all $y_0'\in \R^{n-1}$, and all $\eta_2\in \R^{n}$ such that $e_1\cdot \eta_2=0$ and $|\eta_2|=1$.
Here $ \p_{\lambda} \hat {B^{(3)}}(0, \cdot)\in C_0(\R^{n-1};S^3(\R^n))$ and $j_\delta (\p_{\lambda} \hat {B^{(3)}}(0, \cdot))=0$ in view of \eqref{eq_4_3_decom_new_2}. 

Applying Corollary \ref{cor_inversion_1} to \eqref{eq_4_10}, we conclude that $\p_{\lambda}\hat {B^{(3)}}(0,\cdot)=0$. Continuing in the same fashion, we show that $\p_{\lambda}^k\hat {B^{(3)}}(0,\cdot)=0$ for all $k=0,1,2,\dots$.

Since $B^{(3)}(x_1,\cdot)$ is compactly supported in $x_1$, the function $\lambda\mapsto\hat {B^{(3)}}(\lambda,\cdot)$ is analytic. Therefore, we can deduce that $\hat {B^{(3)}}(\lambda,\cdot)=0$ for all $\lambda\in \R$. Consequently, $B^{(3)}=0$, and it follows from \eqref{eq_4_3_decom_new} that
\begin{equation}
\label{eq_4_15}
A^{(3)}=i_\delta W^{(1)}.
\end{equation}

Let us now return to the integral identity \eqref{int_eq_6} with $A^{(3)}$ given by \eqref{eq_4_15} and substitute $v^{(0)}, v^{(1)}, v^{(2)}$ given by \eqref{eq_4_1} in \eqref{int_eq_6}. By multiplying \eqref{int_eq_6} by $h^2$, letting $h\to 0$, and utilizing \eqref{eq_4_16}, we obtain that 
\begin{equation}
\label{eq_4_17}
\begin{aligned}
&\sum_{i_1,i_2=1}^n \int_{\Omega} A^{(2)}_{i_1 i_2}(x) (e_1+i\eta_2)_{i_1}(e_1+i\eta_2)_{i_2}(a_0^{(1)}(x))^2 a_0^{(0)}(x)dx +\frac{3}{i}\times\\
& \sum_{i_1,i_2,i_3=1}^n \int_{\Omega} (i_\delta W^{(1)})_{i_1 i_2 i_3}(x)(e_1+i\eta_2)_{i_1}(e_1+i\eta_2)_{i_2}(\p_{i_3} a_0^{(1)}(x))a_0^{(1)}(x) a_0^{(0)}(x)dx=0.
\end{aligned}
\end{equation}
Using \eqref{eq_5_3}, \eqref{eq_5_2}, and the fact that $(e_1+i\eta_2)\cdot(e_1+i\eta_2)=0$, we get from \eqref{eq_4_17} that 
\begin{equation}
\label{eq_4_18}
\begin{aligned}
&\sum_{i_1,i_2=1}^n \int_{\R^n} A^{(2)}_{i_1 i_2}(x) (e_1+i\eta_2)_{i_1}(e_1+i\eta_2)_{i_2}(a_0^{(1)}(x))^2 a_0^{(0)}(x)dx\\
&+
\frac{2}{i}\sum_{i_1=1}^n  \int_{\R^n}  W^{(1)}_{i_1}(x)(e_1+i\eta_2)_{i_1}( (e_1+i\eta_2)\cdot \nabla a_0^{(1)}(x))a_0^{(1)}(x) a_0^{(0)}(x)dx=0.
\end{aligned}
\end{equation}

Now we express $A^{(2)}$ as
\begin{equation}
\label{eq_4_3_decom_new_for_2_tensors}
A^{(2)}=B^{(2)}+i_\delta W^{(0)},
\end{equation}
where $W^{(0)}\in C_0(\R^n; S^0)$, $B^{(2)}\in C_0(\R^n;S^2)$ is such that 
\begin{equation}
\label{eq_4_3_decom_new_2_for_2_tensors}
j_\delta B^{(2)}=0, 
\end{equation}
see \cite[Lemma 2.3]{Dairbekov_Sharafutdinov_2011} as well as  the proof of Lemma \ref{lem_inversion_1} in Appendix \ref{sec_app_transforms_2} in the case $m=2$. Thanks to the fact that $(e_1+i\eta_2)\cdot (e_1+i\eta_2)=0$, we get
\begin{equation}
\label{eq_4_3_decom_new_2_for_2_tensors_2}
\sum_{i_1,i_2=1}^n (i_\delta W^{(0)})_{i_1 i_2} (e_1+i\eta_2)_{i_1}(e_1+i\eta_2)_{i_2}=0.
\end{equation}
By substituting \eqref{eq_4_3_decom_new_for_2_tensors} into \eqref{eq_4_18} and using \eqref{eq_4_3_decom_new_2_for_2_tensors_2}, we obtain 
\begin{equation}
\label{eq_4_18_new_decomp}
\begin{aligned}
&\sum_{i_1,i_2=1}^n \int_{\R^n} B^{(2)}_{i_1 i_2}(x) (e_1+i\eta_2)_{i_1}(e_1+i\eta_2)_{i_2}(a_0^{(1)}(x))^2 a_0^{(0)}(x)dx\\
&+
\frac{2}{i}\sum_{i_1=1}^n  \int_{\R^n}  W^{(1)}_{i_1}(x)(e_1+i\eta_2)_{i_1}( (e_1+i\eta_2)\cdot \nabla a_0^{(1)}(x))a_0^{(1)}(x) a_0^{(0)}(x)dx=0.
\end{aligned}
\end{equation}

As before, we shall work in the coordinates $y=(x_1,y_2,y'')$, $y''=(y_3,\dots, y_n)$, with respect to the orthonormal basis $\{e_1,\eta_2,\eta_3, \dots, \eta_n\}$ in $\R^n$.  According to Remark \ref{rem_choice_amplitude}, we choose  
\begin{equation}
\label{eq_4_19}
a_0^{(1)}(x_1,y_2,y'')=y_2-c, \quad a_0^{(0)}(x_1,t,y'')=(y_2-c)^l e^{-i\lambda(x_1+iy_2)}g(y''), 
\end{equation}
where $c\in\R$,  $\lambda\in \R$,  $g\in C^\infty(\R^{n-2})$, and $l=0,1$. 

Writing out the integral in  \eqref{eq_4_18_new_decomp}  in $y$--coordinates,  substituting \eqref{eq_4_19}, and using that $(e_1+i\eta_2)\cdot \nabla a_0^{(1)}=i$,  we obtain that 
\begin{equation}
\label{eq_4_20}
\begin{aligned}
&\sum_{i_1,i_2=1}^n \int_{\R^n} B^{(2)}_{i_1 i_2}(x_1,y_2,y'') (e_1+i\eta_2)_{i_1}(e_1+i\eta_2)_{i_2} \\
&\quad \quad \quad \quad \quad (y_2-c)^{2+l} e^{-i\lambda(x_1+iy_2)}g(y'')dx_1dy_2dy''\\
&+2
\sum_{i_1=1}^n  \int_{\R^n}  W^{(1)}_{i_1}(x_1,y_2,y'')(e_1+i\eta_2)_{i_1}(y_2-c)^{1+l} e^{-i\lambda(x_1+iy_2)}g(y'')dx_1dy_2dy''=0.
\end{aligned}
\end{equation}
Since $B^{(2)}\in C_0(\R^n; S^2)$, $W^{(1)}\in C_0(\R^n; S^1)$, and considering that $g\in C^\infty(\R^{n-2})$ is arbitrary, we can argue as above and deduce from \eqref{eq_4_20} that
\begin{equation}
\label{eq_4_21}
\begin{aligned}
&\sum_{i_1,i_2=1}^n \int_{\R} t^{2+l} \widehat {B^{(2)}_{i_1 i_2}}(\lambda,y_0'+t\eta_2') (e_1+i\eta_2)_{i_1}(e_1+i\eta_2)_{i_2} e^{\lambda t }dt\\
&+2
\sum_{i_1=1}^n  \int_{\R} t^{1+l} \widehat{W^{(1)}_{i_1}}(\lambda,y_0'+t\eta_2')  (e_1+i\eta_2)_{i_1} e^{\lambda t }dt=0,
\end{aligned}
\end{equation}
for all $y_0'=(c,y'')\in \R^{n-1}$, all $\lambda\in \R$, and all $l=0,1$.  Here we have made the change of variables $t=y_2-c$. 

Letting $\lambda=0$, \eqref{eq_4_21} becomes 
\begin{equation}
\label{eq_4_22}
\begin{aligned}
&\sum_{i_1,i_2=1}^n \int_{\R} t^{2+l} \widehat {B^{(2)}_{i_1 i_2}}(0,y_0'+t\eta_2') (e_1+i\eta_2)_{i_1}(e_1+i\eta_2)_{i_2} dt\\
&+2
\sum_{i_1=1}^n  \int_{\R} t^{1+l} \widehat{W^{(1)}_{i_1}}(0,y_0'+t\eta_2')  (e_1+i\eta_2)_{i_1} dt=0,
\end{aligned}
\end{equation}
for  all $y_0'\in \R^{n-1}$, $l=0,1$, and all $\eta_2\in \R^n$ such that $\eta_2\cdot e_1=0$ and $|\eta_2|=1$. Here $\hat {B^{(2)}}(0,\cdot)\in C_0(\R^{n-1}; S^2(\R^n))$ is such that $j_\delta (\hat{B^{(2)}}(0,\cdot))=0$ thanks to  \eqref{eq_4_3_decom_new_2_for_2_tensors}, and $\hat{W^{(1)}}(0,\cdot)\in C_0(\R^{n-1}; S^1(\R^n))$.

Applying  Corollary \ref{cor_inversion_2} to \eqref{eq_4_22}, we conclude that 
\begin{equation}
\label{eq_4_23}
\widehat {B^{(2)}}(0,\cdot)=0, \quad \widehat{W^{(1)}}(0,\cdot)=0. 
\end{equation}
Differentiating \eqref{eq_4_21} with respect to $\lambda$, letting $\lambda=0$, and using \eqref{eq_4_23}, we get 
\begin{equation}
\label{eq_4_24}
\begin{aligned}
&\sum_{i_1,i_2=1}^n \int_{\R} t^{2+l} \p_\lambda \widehat {B^{(2)}_{i_1 i_2}}(0,y_0'+t\eta_2') (e_1+i\eta_2)_{i_1}(e_1+i\eta_2)_{i_2} dt\\
&+2
\sum_{i_1=1}^n  \int_{\R} t^{1+l}\p_\lambda \widehat{W^{(1)}_{i_1}}(0,y_0'+t\eta_2')  (e_1+i\eta_2)_{i_1} dt=0,
\end{aligned}
\end{equation}
for  all $y_0'\in \R^{n-1}$, $l=0,1$, and all $\eta_2\in \R^n$ such that $\eta_2\cdot e_1=0$ and $|\eta_2|=1$. Here $ \p_\lambda  \hat {B^{(2)}}(0,\cdot)\in C_0(\R^{n-1}; S^2(\R^n))$ is such that $j_\delta ( \p_\lambda  \hat{B^{(2)}}(0,\cdot))=0$ thanks to  \eqref{eq_4_3_decom_new_2_for_2_tensors}, and $ \p_\lambda  \hat{W^{(1)}}(0,\cdot)\in C_0(\R^{n-1}; S^1(\R^n))$.  An application of  Corollary \ref{cor_inversion_2} to \eqref{eq_4_24} shows that 
$\p_\lambda \widehat {B^{(2)}}(0,\cdot)=0$, and $\p_\lambda\widehat{W^{(1)}}(0,\cdot)=0$. Continuing in the same fashion, we see that  $\p_\lambda^k \widehat {B^{(2)}}(0,\cdot)=0$ and $\p_\lambda^k\widehat{W^{(1)}}(0,\cdot)=0$, for every $k=0,1,2,\dots$.

As $B^{(2)}(x_1,\cdot)$ and $W^{(1)}(x_1,\cdot)$ are compactly supported in $x_1$, the functions $\lambda\mapsto \widehat {B^{(2)}}(\lambda,\cdot)$ and $\lambda\mapsto \widehat{W^{(1)}}(\lambda,\cdot)$ are analytic. Thus, we get $\widehat{B^{(2)}}(\lambda,\cdot)=0$ and $\widehat{W^{(1)}}(\lambda,\cdot)=0$ for all $\lambda\in\R$. Hence, $B^{(2)}=0$ and $W^{(1)}=0$. Therefore, it follows from \eqref{eq_4_15} that $A^{(3)}=0$, completing the recovery of $A^{(3)}$. We also conclude from \eqref{eq_4_3_decom_new_for_2_tensors} that
\begin{equation}
\label{eq_4_28}
A^{(2)}=i_\delta W^{(0)}.
\end{equation}

Let us now return to the integral identity \eqref{int_eq_6} with $A^{(3)}=0$ and $A^{(2)}$ from \eqref{eq_4_28}. Now it would be convenient to work with three different biharmonic functions $v^{(0)}, v^{(1)}, v^{(2)}\in C^5(\overline{\Omega})$ given by 
\begin{equation}
\label{eq_4_28_1}
\begin{aligned}
v^{(0)}&=e^{\frac{-2x\cdot (e_1+i\eta_2)}{h}}(a_0^{(0)}(x)+r^{(0)}(x; h)),\\
v^{(1)}&=e^{\frac{x\cdot (e_1+i\eta_2)}{h}}(a_0^{(1)}(x)+r^{(1)}(x; h)),\quad v^{(2)}=e^{\frac{x\cdot (e_1+i\eta_2)}{h}}(a_0^{(2)}(x)+r^{(2)}(x; h)), 
\end{aligned}
\end{equation}
where $a_0^{(l)}\in C^\infty(\R^n)$ satisfying \eqref{eq_3_9},  and 
\begin{equation}
\label{eq_4_28_2}
\|r^{(l)}\|_{C^5(\overline{\Omega})}=\mathcal{O}(h),
\end{equation}
as $h\to 0$,  $l=0,1, 2$.  

We will now substitute $v^{(0)}, v^{(1)}, v^{(2)}$ given by \eqref{eq_4_28_1} into the integral identity \eqref{int_eq_6} with $A^{(3)}=0$ and $A^{(2)}$ from \eqref{eq_4_28}. By utilizing \eqref{eq_4_3_decom_new_2_for_2_tensors_2} and \eqref{eq_4_28_2}, multiplying \eqref{int_eq_6} by $h$ and letting $h$ tend to $0$, we obtain that 
\begin{equation}
\label{eq_4_30}
\begin{aligned}
&\sum_{i_1=1}^n \int_{\Omega} A^{(1)}_{i_1}(x) (e_1+i\eta_2)_{i_1}a_0^{(1)}(x) a_0^{(2)}(x) a_0^{(0)}(x)dx\\
&+\frac{1}{i}\sum_{i_1,i_2=1}^n \int_{\Omega} (i_\delta W^{(0)})_{i_1 i_2 }(x) (e_1+i\eta_2)_{i_1}\p_{i_2} (a_0^{(1)}(x) a_0^{(2)}(x)) a_0^{(0)}(x)dx=0.
\end{aligned}
\end{equation}
By using \eqref{eq_5_3} and \eqref{eq_5_2}, we can deduce from \eqref{eq_4_30} that 
\begin{equation}
\label{eq_4_31}
\begin{aligned}
&\sum_{i_1=1}^n \int_{\R^n} A^{(1)}_{i_1}(x) (e_1+i\eta_2)_{i_1}a_0^{(1)}(x)a_0^{(2)}(x) a_0^{(0)}(x)dx\\
&+\frac{1}{i}\int_{\R^n} W^{(0)}(x)  a_0^{(0)}(x) (e_1+i\eta_2)\cdot \nabla (a_0^{(1)}(x)a_0^{(2)}(x)) dx=0.
\end{aligned}
\end{equation}

As before, we shall consider an orthonormal basis $\{e_1,\eta_2,\eta_3, \dots, \eta_n\}$ in $\R^n$, and denote the coordinates in this basis by $y=(x_1,y_2,y'')$, $y''=(y_3,\dots, y_n)$.  According to Remark \ref{rem_choice_amplitude}, we choice 
\begin{equation}
\label{eq_4_32}
a_0^{(1)}(x_1,y_2,y'')=a_0^{(2)}(x_1,y_2,y'')=1, \quad a_0^{(0)}(x_1,t,y'')=(y_2-c) e^{-i\lambda(x_1+iy_2)}g(y''), 
\end{equation}
where $c\in\R$,  $\lambda\in \R$, and $g\in C^\infty(\R^{n-2})$. 

Writing out the integral in  \eqref{eq_4_31}  in $y$--coordinates, and substituting \eqref{eq_4_32}, we get 
\begin{equation}
\label{eq_4_33}
\sum_{i_1=1}^n \int_{\R^n} A^{(1)}_{i_1}(x_1, y_1,y'') (e_1+i\eta_2)_{i_1}(y_2-c) e^{-i\lambda(x_1+iy_2)}g(y'')dx_1dy_2dy''=0.
\end{equation}
By considering that $A^{(1)}\in C_0(\R^n; S^1)$ and the fact that $g\in C^\infty(\R^{n-2})$ is an arbitrary function, and arguing as before, we can deduce from \eqref{eq_4_33} that 
\begin{equation}
\label{eq_4_34}
\sum_{i_1=1}^n \int_{\R} t \widehat{A^{(1)}_{i_1}}(\lambda, y_0'+t\eta_2') (e_1+i\eta_2)_{i_1} e^{\lambda t}dt=0,
\end{equation}
for all $y_0'=(c,y'')\in \R^{n-1}$ and all $\lambda\in \R$. Letting $\lambda=0$, \eqref{eq_4_34} becomes 
\begin{equation}
\label{eq_4_35}
\sum_{i_1=1}^n \int_{\R} t \widehat {A^{(1)}_{i_1}}(0, y_0'+t\eta_2') (e_1+i\eta_2)_{i_1} dt=0,
\end{equation}
for all $y_0'\in \R^{n-1}$, and all $\eta_2\in \R^n$ such that $\eta_2\cdot e_1=0$ and $|\eta_2|=1$.  Here $\widehat {A^{(1)}}(0,\cdot)\in C_0(\R^{n-1}; S^1(\R^n))$ and therefore, an application of Lemma \ref{lem_inversion_1} shows that $\widehat {A^{(1)}}(0,\cdot)=0$.  Proceeding as before by differentiating \eqref{eq_4_34} with respect to $\lambda$, letting $\lambda=0$, applying Lemma \ref{lem_inversion_1}, and inverting the Fourier transform,  we conclude that  $A^{(1)}=0$.

Inserting $A^{(1)}=0$ in \eqref{eq_4_31}, we get
\begin{equation}
\label{eq_4_36}
\int_{\R^n} W^{(0)}(x) a_0^{(0)}(x) (e_1+i\eta_2)\cdot \nabla (a_0^{(1)}(x)a_0^{(2)}(x)) dx=0.
\end{equation}
Writing out the integral in  \eqref{eq_4_36}  in $y$--coordinates,  and choosing  
\[
a_0^{(1)}(x_1,y_2,y'')=y_2, \ a_0^{(2)}(x_1,y_2,y'')=1,\  a_0^{(0)}(x_1,t,y'')= e^{-i\lambda(x_1+iy_2)}g(y''), 
\]
where $\lambda\in \R$ and $g\in C^\infty(\R^{n-2})$, we obtain from \eqref{eq_4_36} that 
\begin{equation}
\label{eq_4_37}
 \int_{\R^n} W^{(0)}(x_1,y_2,y'') e^{-i\lambda(x_1+iy_2)}g(y'')dx_1dy_2dy''=0.
\end{equation}

Since $W^{(0)}\in C_0(\R^n;S^0)$,  and $g\in C^\infty(\R^{n-2})$ is arbitrary, proceeding as previously discussed, we can deduce from \eqref{eq_4_37} that 
\begin{equation}
\label{eq_4_38}
 \int_{\R} \widehat{W^{(0)}}(\lambda,y_0'+t\eta_2') e^{\lambda t}dt=0.
\end{equation}
This holds for all $y_0'=(c,y'')\in \R^{n-1}$, all $\eta_2'\in \R^{n-1}$ such that $|\eta_2'|=1$, and all $\lambda\in \R$. We have $\p_\lambda^k\widehat{W^{(0)}}(0,\cdot)\in C_0(\R^{n-1}; S^0)$, $k=0,1,\dots$.

When setting $\lambda$ to zero in  \eqref{eq_4_38}, we obtain  $(J^{0,0}\widehat{W^{(0)}}(0,\cdot))(y_0',\eta_2')=0$ for all $(y_0,\eta_2')\in \R^{n-1}\times \mathbb{S}^{n-2}$. As a consequence of Theorem \ref{thm_Bhattacharyya_Krishnan_Sahoo}, we can conclude that $\widehat{W^{(0)}}(0,\cdot)=0$. Following the same strategy as before, which involves differentiating \eqref{eq_4_38} with respect to $\lambda$, setting $\lambda$ to zero, utilizing Theorem \ref{thm_Bhattacharyya_Krishnan_Sahoo}, and inverting the Fourier transform, we conclude from \eqref{eq_4_38} that $W^{(0)}=0$. Thus, it follows from \eqref{eq_4_28} that $A^{(2)}=0$. 

Let us return to integral identity \eqref{int_eq_6} while setting $A^{(1)}=0$, $A^{(2)}=0$, and $A^{(3)}=0$, and taking $v^{(0)}=1$. We have
\begin{equation}
\label{eq_4_41}
\int_{\Omega} qv^{(1)}v^{(2)} \,dx = 0,
\end{equation}
for all $v^{(1)}, v^{(2)} \in C^{4,\alpha}(\overline{\Omega})$ solving $(-\Delta)^2 v^{(l)}=0$ in $\Omega$, $l=1,2$. To conclude that $q=0$, we follow a standard argument, see \cite{Isakov_1991, Krup_Lassas_Uhlmann_2014}. Let $\xi\in\R^n$ be arbitrary, and let $\alpha,\beta\in \R^n$ be such that $|\alpha|=|\beta|=1$ and $\xi\cdot\alpha=\xi\cdot\beta=\alpha\cdot\beta=0$. Utilizing Proposition \ref{prop_CGO_q},  we take
\begin{equation}
\label{eq_4_42}
v^{(1)}=e^{\frac{x\cdot(\alpha+i\beta)}{h}}(e^{-ix\cdot \xi}+ r^{(1)}(x; h)), \quad v^{(2)}=e^{\frac{-x\cdot(\alpha+i\beta)}{h}}(1+ r^{(2)}(x; h)),
\end{equation}
where $\|r^{(l)}\|_{C^5(\overline{\Omega})}=\mathcal{O}(h)$ as $h\to 0$ for $l=1, 2$. Substituting $v^{(1)}, v^{(2)}$ given by \eqref{eq_4_42} into the integral identity \eqref{eq_4_41} and letting $h\to 0$, we obtain
\[
\int_{\Omega} q e^{-ix\cdot \xi} \,dx = 0,
\]
for all $\xi\in\R^n$. Thus, $q=0$ in $\Omega$. This completes the proof of Theorem \ref{thm_density}.

\section{Proof of Theorem \ref{thm_main}}

\label{sec_thm_main}

In this section, it will be convenient for us to write $A^{(1), (j)}:=A^{(j)}$, $q^{(1)}:=q$, and  $A^{(2), (j)}:=\tilde A^{(j)}$, $q^{(2)}:=\tilde q$, $j=1,2,3$.  Let $\varepsilon=(\varepsilon_1,\dots, \varepsilon_m)\in \C^m$, $m\ge 2$, and consider the Dirichlet problem \eqref{int_eq_4} for $L_{A^{(r),(1)}, A^{(r),(2)}, A^{(r),(3)}, q^{(r)}}$ with 
\[
f=\sum_{k=1}^m\varepsilon_k f_k, \quad g=\sum_{k=1}^m\varepsilon_k g_k, \quad f_k \in C^{4,\alpha}(\p \Omega), \quad g_k\in C^{3,\alpha}(\p \Omega), \quad k=1,\dots, m,
\]
where $r=1,2$. 
By Theorem \ref{thm_well-posedness}, for all $|\varepsilon|$ sufficiently small, the problem \eqref{int_eq_4} has a unique small solution $u_r(\cdot, \varepsilon)\in C^{4,\alpha}(\overline{\Omega})$, which depends holomorphically on $\varepsilon\in \text{neigh}(0, \C^m)$.  We shall use an induction argument on $m\ge 2$ to prove that the equality  $\Lambda_{A^{(1), (1)},A^{(1),(2)},A^{(1), (3)},q^{(1)}}=\Lambda_{A^{(2), (1)},A^{(2),(2)},A^{(2), (3)},q^{(2)}}$ implies that 
\[
A^{(1), (j),{m-1}}=A^{(2), (j),{m-1}}, \ j=1,2,3, \quad q^{(1), m}=q^{(2), m} \ \text{in}\ \overline{\Omega}\times\C, \quad m=2,3,\dots, 
\]
where $A^{(r), (j),{m-1}}$ are given in  \eqref{int_eq_2}, and $q^{(r), m}$  are given in  \eqref{int_eq_3}. 

First, let $m=2$, and let us start by performing a second-order linearization of the Dirichlet--to--Neumann map. Let $u_r=u_r(x,\varepsilon)$ be the unique small solution to the Dirichlet problem 
\begin{equation}
\label{eq_2_1}
\begin{cases}
(-\Delta)^2u_r+\sum_{j=1}^3 \sum_{i_1,\dots, i_j=1}^n \big( \sum_{k=1}^\infty A^{(r),(j), k}_{i_1\dots i_j}(x)\frac{u_r^k}{k!} \big) D^j_{i_1\dots i_j}u_r\\
\quad \quad \quad \quad \quad \quad \quad+  \sum_{k=2}^\infty q^{(r),k}(x)\frac{u_r^k}{k!}=0\quad \text{in}\quad \Omega,\\
u_r|_{\p \Omega}=\varepsilon_1f_1+\varepsilon_2 f_2,\\
\p_\nu u_r|_{\p \Omega}=\varepsilon_1g_1+\varepsilon_2 g_2,
\end{cases}
\end{equation}
for $r=1,2$.  Applying $\p_{\varepsilon_l}|_{\varepsilon=0}$ to \eqref{eq_2_1}, $l=1,2$, and using that $u_r(x,0)=0$, we get 
\begin{equation}
\label{eq_2_2}
\begin{cases}
(-\Delta)^2v_r^{(l)}=0 \quad \text{in}\quad \Omega,\\
v_r^{(l)}|_{\p \Omega}=f_l,\\
\p_\nu v_r^{(l)}|_{\p \Omega}=g_l,
\end{cases}
\end{equation}
where $v_r^{(l)}=\p_{\varepsilon_l} u_r|_{\varepsilon=0}$, $r=1,2$. By the uniqueness for the Dirichlet problem  \eqref{eq_2_2}, we see that $v^{(l)}:=v_1^{(l)}=v_2^{(l)}\in C^{4,\alpha}(\overline{\Omega})$, $l=1,2$, see  \cite[Theorem 2.19]{Gazzola_Grunau_Sweers_book_2010}, \cite[Section 6.6]{Evans_book}. 

Applying $\p_{\varepsilon_1}\p_{\varepsilon_2}|_{\varepsilon=0}$ to \eqref{eq_2_1}, we get the second order linearization, 
\begin{equation}
\label{eq_2_3}
\begin{cases}
(-\Delta)^2w_r+\sum_{j=1}^3 \sum_{i_1,\dots, i_j=1}^n  A^{(r),(j), 1}_{i_1\dots i_j}(x)\big(v^{(1)} D^j_{i_1\dots i_j}v^{(2)}+v^{(2)} D^j_{i_1\dots i_j}v^{(1)}\big)\\
\quad\quad \quad \quad \quad \quad \quad\quad \quad \quad \quad  + q^{(r), 2}v^{(1)}v^{(2)}=0\quad \text{in}\quad \Omega,\\
w_r|_{\p \Omega}=0,\\
\p_\nu w_r|_{\p \Omega}=0,
\end{cases}
\end{equation}
where $w_r=\p_{\varepsilon_1}\p_{\varepsilon_2}u_r|_{\varepsilon=0}$,  $r=1,2$.  
Now the equality 
$
\Lambda_{A^{(1),(1)},A^{(1),(2)},A^{(1),(3)},q^{(1)}}(\varepsilon_1f_1+\varepsilon_2 f_2, \varepsilon_1g_1+\varepsilon_2 g_2)=\Lambda_{A^{(2),(1)},A^{(2),(2)},A^{(2),(3)},q^{(2)}}(\varepsilon_1f_1+\varepsilon_2 f_2, \varepsilon_1g_1+\varepsilon_2 g_2)$,
for all small $\varepsilon$ and all $f_k \in C^{4,\alpha}(\p \Omega)$, $g_k\in C^{3,\alpha}(\p \Omega)$, $k=1,2$, implies that $\p_\nu^2 u_1|_{\p \Omega}=\p_\nu^2 u_2|_{\p \Omega}$ and 
$\p_\nu^3 u_1|_{\p \Omega}=\p_\nu^3 u_2|_{\p \Omega}$. Hence, applying $\p_{\varepsilon_1}\p_{\varepsilon_2}|_{\varepsilon=0}$, we conclude that $\p_\nu^2 w_1|_{\p \Omega}=\p_\nu^2 w_2|_{\p \Omega}$ and 
$\p_\nu^3 w_1|_{\p \Omega}=\p_\nu^3 w_2|_{\p \Omega}$.

It follows from \eqref{eq_2_3} that 
\begin{equation}
\label{eq_2_4}
\begin{aligned}
(-\Delta)^2(&w_1-w_2)\\
&=\sum_{j=1}^3 \sum_{i_1,\dots, i_j=1}^n  (A^{(2),(j), 1}_{i_1\dots i_j}(x)-A^{(1),(j), 1}_{i_1\dots i_j}(x))\big(v^{(1)} D^j_{i_1\dots i_j}v^{(2)}+v^{(2)} D^j_{i_1\dots i_j}v^{(1)}\big)\\  
&+ (q^{(2), 2}-q^{(1), 2})v^{(1)}v^{(2)}=0\quad \text{in}\quad \Omega. 
\end{aligned}
\end{equation}
We also have $(w_1-w_2)|_{\p \Omega}=0$, $\p_\nu (w_1-w_2)|_{\p \Omega}=0$, $\p_\nu^2 (w_1-w_2)|_{\p \Omega}=0$, and $\p_\nu^3 (w_1-w_2)|_{\p \Omega}=0$. Multiplying \eqref{eq_2_4} by $v^{(0)}\in C^{4,\alpha}(\overline{\Omega})$ such that $(-\Delta)^2 v^{(0)}=0$, integrating over $\Omega$,  and using Green's formula, see \cite[Theorem 10.1]{Agmon_book_1965}, we obtain that 
\begin{equation}
\label{eq_2_5}
\begin{aligned}
&\sum_{j=1}^3 \sum_{i_1,\dots, i_j=1}^n  \int_\Omega (A^{(2),(j), 1}_{i_1\dots i_j}(x)-A^{(1),(j), 1}_{i_1\dots i_j}(x))\big(v^{(1)} D^j_{i_1\dots i_j}v^{(2)}+v^{(2)} D^j_{i_1\dots i_j}v^{(1)}\big)v^{(0)}dx\\  
&+ \int_{\Omega}(q^{(2), 2}-q^{(1), 2})v^{(1)}v^{(2)}v^{(0)}dx=0. 
\end{aligned}
\end{equation}
Note that \eqref{eq_2_5} holds for all $v^{(l)}\in C^{4,\alpha}(\overline{\Omega})$ such that $(-\Delta)^2 v^{(l)}=0$ in $\Omega$, $l=1,2,3$. In view of \eqref{eq_int_boundary_determination}, an application of Theorem  \ref{thm_density} allows us to conclude that 
\[
A^{(1),(j), 1}=A^{(2),(j), 1}, \quad j=1,2,3, \quad q^{(1),2}=q^{(2),2}. 
\]

Let $m\ge 3$ and let us assume that 
\begin{equation}
\label{eq_2_6}
A^{(1),(j), k}=A^{(2),(j),k}, \quad j=1,2,3, \quad q^{(1),k+1}=q^{(2),k+1}, \quad k=1,\dots, m-2. 
\end{equation}
To prove that 
\[
A^{(1),(j), m-1}=A^{(2),(j),m-1}, \quad j=1,2,3, \quad q^{(1),m}=q^{(2),m}, 
\]
we shall carry out the $m$th order linearization of the Dirichlet--to--Neumann map. In doing so, we let $u_r=u_r(x,\varepsilon)$ be the unique small solution to 
\begin{equation}
\label{eq_2_7}
\begin{cases}
(-\Delta)^2u_r+\sum_{j=1}^3 \sum_{i_1,\dots, i_j=1}^n \big( \sum_{k=1}^\infty A^{(r),(j), k}_{i_1\dots i_j}(x)\frac{u_r^k}{k!} \big) D^j_{i_1\dots i_j}u_r\\
\quad \quad \quad \quad \quad \quad \quad+ \sum_{k=2}^\infty q^{(r),k}(x)\frac{u_r^k}{k!}=0\quad \text{in}\quad \Omega,\\
u_r|_{\p \Omega}=\varepsilon_1f_1+\dots+ \varepsilon_m f_m,\\
\p_\nu u_r|_{\p \Omega}=\varepsilon_1g_1+\dots+ \varepsilon_m g_m,
\end{cases}
\end{equation}
for $r=1,2$.  We would like to apply $\p_{\varepsilon_1}\dots\p_{\varepsilon_m}|_{\varepsilon=0}$ to \eqref{eq_2_7}. To that end, we first note that 
\[
\p_{\varepsilon_1}\dots\p_{\varepsilon_m} \bigg(\sum_{j=1}^3 \sum_{i_1,\dots, i_j=1}^n \big( \sum_{k=m}^\infty A^{(r),(j), k}_{i_1\dots i_j}(x)\frac{u_r^k}{k!} \big) D^j_{i_1\dots i_j}u_r
+ \sum_{k=m+1}^\infty q^{(r),k}(x)\frac{u_r^k}{k!}\bigg)
\] 
is a sum of terms, each of them containing positive powers of $u_r$ which vanish when $\varepsilon=0$. The only term in the expression for $\p_{\varepsilon_1}\dots\p_{\varepsilon_m}\big(q^{(r),m}(x)\frac{u_r^m}{m!}\big)$ which does not contain a positive power of $u_r$ is given by 
\begin{equation}
\label{eq_2_8}
q^{(r),m}(x) \p_{\varepsilon_1}u_r\dots\p_{\varepsilon_m}u_r.
\end{equation}
 The only term in 
$\p_{\varepsilon_1}\dots\p_{\varepsilon_m}\big(\sum_{j=1}^3 \sum_{i_1,\dots, i_j=1}^n  A^{(r),(j), m-1}_{i_1\dots i_j}(x)\frac{u_r^{m-1}}{(m-1)!} D^j_{i_1\dots i_j}u_r\big)$ which does not contain a power of $u_r$ is 
\begin{equation}
\label{eq_2_8_1}
\sum_{j=1}^3 \sum_{i_1,\dots, i_j=1}^n  A^{(r),(j), m-1}_{i_1\dots i_j}(x)\bigg(\sum_{k=1}^m \prod_{l=1,l\ne k}^m \p_{\varepsilon_l}u_r D^j_{i_1\dots i_j}\p_{\varepsilon_k}u_r\bigg).
\end{equation}
Finally, the expression 
\begin{equation}
\label{eq_2_9}
\p_{\varepsilon_1}\dots\p_{\varepsilon_m} \bigg(\sum_{j=1}^3 \sum_{i_1,\dots, i_j=1}^n \big( \sum_{k=1}^{m-2} A^{(r),(j), k}_{i_1\dots i_j}(x)\frac{u_r^k}{k!} \big) D^j_{i_1\dots i_j}u_r
+ \sum_{k=2}^{m-1}q^{(r),k}(x)\frac{u_r^k}{k!}\bigg)\bigg|_{\varepsilon=0}
\end{equation}
is independent of $r=1,2$. Indeed, this follows from \eqref{eq_2_6}, the fact that \eqref{eq_2_9} contains only the derivatives of $u_r$ of the form $\p^s_{\varepsilon_{l_1}, \dots, \varepsilon_{l_s}}u_r|_{\varepsilon=0}$ with $s=1,\dots, m-1$, $\varepsilon_{l_1}, \dots, \varepsilon_{l_s}\in \{\varepsilon_1,\dots, \varepsilon_m\}$, and the fact that 
\[
\p^s_{\varepsilon_{l_1}, \dots, \varepsilon_{l_s}}u_1|_{\varepsilon=0}=\p^s_{\varepsilon_{l_1}, \dots, \varepsilon_{l_s}}u_2|_{\varepsilon=0}.
\]
The latter can be established by induction on $s$, applying the operator  $\p^s_{\varepsilon_{l_1}, \dots, \varepsilon_{l_s}}|_{\varepsilon=0}$ to \eqref{eq_2_7} and using \eqref{eq_2_6} together with the unique solvability of the Dirichlet problem for the operator $(-\Delta)^2$. 

Thus,  applying $\p_{\varepsilon_1}\dots\p_{\varepsilon_m}|_{\varepsilon=0}$ to \eqref{eq_2_7}, we get 
\begin{equation}
\label{eq_2_10}
\begin{cases}
(-\Delta)^2 w_r+\sum_{j=1}^3 \sum_{i_1,\dots, i_j=1}^n  A^{(r),(j), m-1}_{i_1\dots i_j}(x)\bigg(\sum_{k=1}^m \prod_{l=1,l\ne k}^m v^{(l)} D^j_{i_1\dots i_j}v^{(k)}\bigg)
\\
\quad \quad \quad \quad \quad \quad \quad+  q^{(r),m}(x)v^{(1)}\dots v^{(m)}=H_m\quad \text{in}\quad \Omega,\\
w_r|_{\p \Omega}=0,\\
\p_\nu w_r|_{\p \Omega}=0,
\end{cases}
\end{equation}
for $r=1,2$, see \eqref{eq_2_8}, \eqref{eq_2_8_1}. Here $w_r=\p_{\varepsilon_1}\dots\p_{\varepsilon_m}u_r|_{\varepsilon=0}$ and 
\begin{align*}
H_m(x):=-&\p_{\varepsilon_1}\dots\p_{\varepsilon_m} \\
&\bigg(\sum_{j=1}^3 \sum_{i_1,\dots, i_j=1}^n \big( \sum_{k=1}^{m-2} A^{(r),(j), k}_{i_1\dots i_j}(x)\frac{u_r^k}{k!} \big) D^j_{i_1\dots i_j}u_r
+  \sum_{k=2}^{m-1}q^{(r),k}(x)\frac{u_r^k}{k!}\bigg)\bigg|_{\varepsilon=0}.
\end{align*}
Letting $w=w_1-w_2$, we obtain from \eqref{eq_2_10} that 
\begin{equation}
\label{eq_2_11}
\begin{cases}
(-\Delta)^2 (w_1-w_2) 
=\sum_{j=1}^3 \sum_{i_1,\dots, i_j=1}^n  (A^{(2),(j), m-1}_{i_1\dots i_j}(x)-A^{(1),(j), m-1}_{i_1\dots i_j}(x))\\\quad \quad \quad \quad \quad \quad \quad \bigg(\sum_{k=1}^m \prod_{l=1,l\ne k}^m v^{(l)} D^j_{i_1\dots i_j}v^{(k)}\bigg)\\
\quad \quad \quad \quad \quad \quad \quad+  (q^{(2),m}(x)- q^{(1),m}(x))v^{(1)}\dots v^{(m)}\quad \text{in}\quad \Omega,\\
w|_{\p \Omega}=0,\\
\p_\nu w|_{\p \Omega}=0.
\end{cases}
\end{equation}
 Applying $\p_{\varepsilon_1}\dots\p_{\varepsilon_m}|_{\varepsilon=0}$ to 
$
\Lambda_{A^{(1),(1)},A^{(1),(2)},A^{(1),(3)},q^{(1)}}(\varepsilon_1f_1+\dots+ \varepsilon_m f_m, \varepsilon_1g_1+\dots+ \varepsilon_m g_m)=\Lambda_{A^{(2),(1)},A^{(2),(2)},A^{(2),(3)},q^{(2)}}(\varepsilon_1f_1+\dots+ \varepsilon_m f_m, \varepsilon_1g_1+\dots+ \varepsilon_m g_m)$,
we get $\p_\nu^2 w|_{\p \Omega}=0$ and 
$\p_\nu^3 w=0$. Let $v^{(0)}\in C^{4,\alpha}(\overline{\Omega})$ be such that $(-\Delta)^2v^{(0)}=0$ in $\Omega$. Multiplying \eqref{eq_2_11} by $v^{(0)}$ and using Green's formula, see \cite[Theorem 10.1]{Agmon_book_1965},  we obtain that 
\begin{equation}
\label{eq_2_12}
\begin{aligned}
\sum_{j=1}^3 \sum_{i_1,\dots, i_j=1}^n \int_{\Omega} (A^{(2),(j), m-1}_{i_1\dots i_j}(x)-A^{(1),(j), m-1}_{i_1\dots i_j}(x))\bigg(\sum_{k=1}^m \prod_{l=1,l\ne k}^m v^{(l)} D^j_{i_1\dots i_j}v^{(k)}\bigg) v^{(0)}dx \\
+ \int_{\Omega} (q^{(2),m}(x)- q^{(1),m}(x))v^{(1)}\dots v^{(m)}v^{(0)}dx=0,
\end{aligned}
\end{equation}
for all $v^{(l)}\in C^{4,\alpha}(\overline{\Omega})$ solving $(-\Delta)^2v^{(l)}=0$ in $\Omega$, $l=0,1,\dots, m$. 
 
Letting $v^{(3)}=\dots=v^{(m)}=1$ in \eqref{eq_2_12}, we get 
\begin{equation}
\label{eq_2_13}
\begin{aligned}
\sum_{j=1}^3 \sum_{i_1,\dots, i_j=1}^n \int_{\Omega} (A^{(2),(j), m-1}_{i_1\dots i_j}(x)-&A^{(1),(j), m-1}_{i_1\dots i_j}(x))\big( v^{(1)} D^j_{i_1\dots i_j}v^{(2)}+ v^{(2)} D^j_{i_1\dots i_j}v^{(1)}\big) 
\\
&v^{(0)}dx 
+ \int_{\Omega} (q^{(2),m}(x)- q^{(1),m}(x))v^{(1)}v^{(2)}v^{(0)}dx=0,
\end{aligned}
\end{equation}
 for all $v^{(l)}\in C^{4,\alpha}(\overline{\Omega})$ solving $(-\Delta)^2v^{(l)}=0$ in $\Omega$, $l=0,1,2$. 
In view of  \eqref{eq_int_boundary_determination}, applying Theorem  \ref{thm_density}, we conclude that 
\[
A^{(1),(j), m-1}=A^{(2),(j), m-1}, \quad j=1,2,3, \quad q^{(1),m}=q^{(2),m}. 
\]
This completes the proof of Theorem \ref{thm_main}.

\begin{appendix}

\section{Well-posedness of the Dirichlet problem for a third-order nonlinear perturbation of a biharmonic operator}

\label{sec_well-posedness}

In this appendix, we shall show the well-posedness of the Dirichlet problem for a third-order nonlinear perturbation of a biharmonic operator. The argument is standard, see \cite{Krup_Uhlmann_2020}, 
\cite{Lassas_Liimatainen_Lin_Salo_2021}, and is presented here for completeness and convenience of the reader.

Let $\Omega\subset \R^n$, $n\ge 3$, be a bounded open set with $C^\infty$ boundary. Let $k\in \N\cup \{0\}$, $0<\alpha<1$, and let $C^{k,\alpha}(\overline{\Omega})$ be the standard H\"older space, see \cite{Hormander_1976}. Note that $C^{k,\alpha}(\overline{\Omega})$ is an algebra under pointwise multiplication, with 
\begin{equation}
\label{eq_8_1}
\|uv\|_{C^{k,\alpha}(\overline{\Omega})}\le C\big(\|u\|_{C^{k,\alpha}(\overline{\Omega})}\|v\|_{L^\infty(\Omega)}+\|u\|_{L^\infty(\Omega)} \|v\|_{C^{k,\alpha}(\overline{\Omega})}\big), \quad u,v\in C^{k,\alpha}(\overline{\Omega}), 
\end{equation}
see \cite[Theorem A.7]{Hormander_1976}. 

Consider the Dirichlet problem for the operator $L_{A^{(1)},A^{(2)},A^{(3)},q}$, given by  \eqref{int_eq_1},
\begin{equation}
\label{eq_8_2}
\begin{cases}
L_{A^{(1)},A^{(2)},A^{(3)},q}u=0\quad \text{in}\quad \Omega,\\
u|_{\p \Omega}=f,\\
\p_\nu u|_{\p \Omega}=g,
\end{cases}
\end{equation}
where $\nu$ is the unit outer normal to the boundary. Here 
$A^{(j)}: \overline{\Omega}\times \C\to S^j$, $j=1,2,3$, and $q:\overline{\Omega}\times\C\to \C$ satisfy the conditions: 
\begin{itemize}
\item[(i)] the map $\C\ni z\mapsto A^{(j)}(\cdot,z)$ is holomorphic with values in $C^{0,\alpha}(\overline{\Omega};  S^j)$ for some $0<\alpha<1$, the space of H\"older continuous symmetric $j$--tensor fields, 

\item[(ii)] the map $\C\ni z\mapsto q(\cdot,z)$ is holomorphic with values in $C^{0,\alpha}(\overline{\Omega})$,

\item[(iii)] $A^{(j)}(x,0)=0$, $q(x,0)=0$, and $\p_z q(x,0)=0$ for all $x\in \overline{\Omega}$, $j=1,2,3$. 
\end{itemize}

It follows from (i), (ii), (iii) that $A^{(j)}$, $j=1,2,3$, and $q$ can be expanded into power series 
\begin{equation}
\label{eq_8_3}
A^{(j)}(x,z)=\sum_{k=1}^\infty A^{(j),k}(x)\frac{z^k}{k!}, \quad A^{(j),k}(x):=\p_z^kA^{(j)}(x,0)\in C^{0,\alpha}(\overline{\Omega};  S^j),
\end{equation}
converging in the $C^{0,\alpha}(\overline{\Omega};  S^j)$ topology, and 
\begin{equation}
\label{eq_8_4}
q(x,z)=\sum_{k=2}^\infty q^{k}(x)\frac{z^k}{k!}, \quad q^{k}(x):=\p_z^kq (x,0)\in C^{0,\alpha}(\overline{\Omega}),
\end{equation}
converging in the $C^{0,\alpha}(\overline{\Omega})$ topology.  

In what follows, when $X,Y$ are Banach spaces, we equip $X\times Y$ with the norm $\|(x,y)\|_{X\times Y}=\|x\|_{X}+\|y\|_{Y}$. Then $X\times Y$ becomes a Banach space. We have the following result. 
\begin{thm}
\label{thm_well-posedness}
Assume that $A^{(j)}: \overline{\Omega}\times \C\to S^j$, $j=1,2,3$, and $q:\overline{\Omega}\times\C\to \C$ satisfy the conditions  (i), (ii), and (iii). Then there exist $\delta>0$ and $C>0$ such that for any $(f,g)\in B_\delta(\p \Omega):=\{(f,g)\in C^{4,\alpha}(\p \Omega)\times C^{3,\alpha}(\p \Omega): \|f\|_{C^{4,\alpha}(\p \Omega)}+ \|g\|_{C^{3,\alpha}(\p \Omega)}<\delta\}$, the problem \eqref{eq_8_2} has a  solution $u=u_{f,g}\in C^{4,\alpha}(\overline{\Omega})$ satisfying 
\[
\|u\|_{C^{4,\alpha}(\overline{\Omega})}\le C(\|f\|_{C^{4,\alpha}(\p \Omega)}+\|g\|_{C^{3,\alpha}(\p \Omega)}).
\]
The solution is unique within the class $\{u\in C^{4,\alpha}(\overline{\Omega}): \|u\|_{C^{4,\alpha}(\overline{\Omega})}< C\delta \}$, and it depends holomorphically on $(f,g)\in B_\delta(\p \Omega)$. Furthermore, the map 
\[
B_\delta(\p \Omega)\to  C^{2,\alpha}(\p \Omega)\times C^{1,\alpha}(\p \Omega), \quad   (f,g)\mapsto (\p_\nu^2 u|_{\p \Omega}, \p_\nu^3 u|_{\p \Omega})
\]
is holomorphic. 
\end{thm}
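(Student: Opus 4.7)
The plan is to apply the holomorphic implicit function theorem in Banach spaces at the trivial solution $u=0$. Define the map
\[
F: C^{4,\alpha}(\overline{\Omega})\times C^{4,\alpha}(\p\Omega)\times C^{3,\alpha}(\p\Omega)\to C^{0,\alpha}(\overline{\Omega})\times C^{4,\alpha}(\p\Omega)\times C^{3,\alpha}(\p\Omega)
\]
by $F(u,f,g)=(L_{A^{(1)},A^{(2)},A^{(3)},q}u,\ u|_{\p\Omega}-f,\ \p_\nu u|_{\p\Omega}-g)$. Condition (iii) yields $F(0,0,0)=0$. Computing $D_u F(0,0,0)\cdot v$ we differentiate each term: the derivative of $\sum A^{(j)}_{i_1\dots i_j}(x,u)D^j_{i_1\dots i_j}u$ at $u=0$ in the direction $v$ equals $A^{(j)}(x,0)D^j v+(\p_zA^{(j)})(x,0)v\cdot D^j(0)$, and both contributions vanish by (iii); similarly $\p_z q(x,0)=0$ eliminates the linearization of $q$. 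Hence $D_u F(0,0,0):v\mapsto((-\Delta)^2v,v|_{\p\Omega},\p_\nu v|_{\p\Omega})$ is simply the biharmonic Dirichlet operator.

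Next I invoke Schauder theory for the biharmonic operator (see e.g.\ \cite{Gazzola_Grunau_Sweers_book_2010}, \cite{Agmon_book_1965}): the map $v\mapsto((-\Delta)^2 v,v|_{\p\Omega},\p_\nu v|_{\p\Omega})$ is a Banach space isomorphism from $C^{4,\alpha}(\overline{\Omega})$ onto $C^{0,\alpha}(\overline{\Omega})\times C^{4,\alpha}(\p\Omega)\times C^{3,\alpha}(\p\Omega)$. Thus $D_u F(0,0,0)$ is bijective and continuous, hence an isomorphism. Provided $F$ is holomorphic (verified below), the holomorphic implicit function theorem in Banach spaces produces $\delta>0$ and a holomorphic map $B_\delta(\p\Omega)\ni(f,g)\mapsto u_{f,g}\in C^{4,\alpha}(\overline{\Omega})$ with $u_{0,0}=0$, giving the required solution. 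Local uniqueness within a $C^{4,\alpha}$-ball follows from the uniqueness assertion in the implicit function theorem, and the norm bound $\|u\|_{C^{4,\alpha}}\le C(\|f\|_{C^{4,\alpha}}+\|g\|_{C^{3,\alpha}})$ follows by expanding $u_{f,g}$ as a convergent power series in $(f,g)$ starting from the linear term.

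The main technical step is to verify that $F$ is holomorphic, which reduces to showing that the nonlinear Nemytskii maps $u\mapsto A^{(j)}(\cdot,u)$ and $u\mapsto q(\cdot,u)$ are holomorphic from $C^{4,\alpha}(\overline{\Omega})$ to $C^{0,\alpha}(\overline{\Omega};S^j)$ and $C^{0,\alpha}(\overline{\Omega})$, respectively. Using the power series expansions \eqref{eq_8_3}, \eqref{eq_8_4}, I would write
\[
A^{(j)}(\cdot,u)=\sum_{k=1}^\infty A^{(j),k}(\cdot)\frac{u^k}{k!},
\]
and use the algebra property \eqref{eq_8_1} of $C^{0,\alpha}(\overline{\Omega})$ together with the continuous embedding $C^{4,\alpha}(\overline{\Omega})\hookrightarrow C^{0,\alpha}(\overline{\Omega})\cap L^\infty(\Omega)$ to estimate $\|u^k\|_{C^{0,\alpha}}\le C^k\|u\|_{C^{0,\alpha}}^k$ (in fact $\le C^k\|u\|_{C^{4,\alpha}}^k$). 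Since the $A^{(j),k}$ power series converges in the $C^{0,\alpha}$ topology with geometric rate on compact subsets of $\C$, the composed series converges in $C^{0,\alpha}(\overline{\Omega};S^j)$ uniformly for $u$ in a sufficiently small $C^{4,\alpha}$-ball. Each partial sum is a polynomial, hence holomorphic, and the uniform limit of holomorphic Banach-valued maps is holomorphic. Multiplying by $D^j_{i_1\dots i_j}u\in C^{1,\alpha}(\overline{\Omega})\subset C^{0,\alpha}(\overline{\Omega})$ and using \eqref{eq_8_1} again shows each term of $L_{A^{(1)},A^{(2)},A^{(3)},q}u$ is holomorphic from $C^{4,\alpha}$ to $C^{0,\alpha}$; trace maps are linear and continuous, hence holomorphic. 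This is the step requiring the most care, as one must track the geometric decay of Taylor coefficients carefully to ensure absolute convergence.

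Finally, the Dirichlet--to--Neumann boundary evaluations $u\mapsto \p_\nu^2 u|_{\p\Omega}$ and $u\mapsto \p_\nu^3 u|_{\p\Omega}$ are bounded linear (hence holomorphic) maps $C^{4,\alpha}(\overline{\Omega})\to C^{2,\alpha}(\p\Omega)$ and $C^{4,\alpha}(\overline{\Omega})\to C^{1,\alpha}(\p\Omega)$, respectively. Composing with the holomorphic solution map $(f,g)\mapsto u_{f,g}$ yields the holomorphy of the Dirichlet--to--Neumann map, completing the proof.
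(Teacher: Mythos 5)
Your proposal is correct and follows essentially the same route as the paper: the same map $F$, verification that the Nemytskii-type maps land in $C^{0,\alpha}$ and are holomorphic via the power series \eqref{eq_8_3}--\eqref{eq_8_4} and the algebra property \eqref{eq_8_1}, identification of $\p_u F(0,0,0)$ with the biharmonic Dirichlet operator (an isomorphism by Schauder theory), and the holomorphic implicit function theorem in Banach spaces. The only cosmetic difference is that the paper checks holomorphy of $F$ via local boundedness plus weak holomorphy along complex lines, whereas you argue via locally uniform convergence of the polynomial partial sums; both verifications are standard and equivalent here.
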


\begin{proof}
We shall follow \cite[Appendix A]{Krup_Uhlmann_2020}, see also \cite{Lassas_Liimatainen_Lin_Salo_2021}, and rely on the implicit function theorem for holomorphic maps between complex Banach spaces, see \cite[p. 144]{Poschel_Trubowitz_1987}. In doing so, we consider the map 
\begin{equation}
\label{eq_8_5}
\begin{aligned}
F: C^{4,\alpha}(\p \Omega)\times C^{3,\alpha}(\p \Omega)\times C^{4,\alpha}(\overline{\Omega})\to C^{0,\alpha}(\overline{\Omega})\times C^{4,\alpha}(\p \Omega)\times C^{3,\alpha}(\p \Omega),\\
F(f,g,u)=\big(L_{A^{(1)}, A^{(2)}, A^{(3)},q}u, u|_{\p \Omega}-f, \p_\nu u|_{\p \Omega}-g\big).
\end{aligned}
\end{equation}
Let us check that $F$ enjoys the mapping properties \eqref{eq_8_5}. To that end let $u\in C^{4,\alpha}(\overline{\Omega})$ and first note that $(-\Delta)^2u\in C^{0,\alpha}(\overline{\Omega})$. Let $j=1,2,3$ and let us now check that $A^{(j)}(\cdot, u(\cdot))\in C^{0,\alpha}(\overline{\Omega}; S^j)$. As in  \cite{Krup_Uhlmann_2020}, using Cauchy's estimates, we see that the coefficients $A^{(j),k}$ in \eqref{eq_8_3} satisfy
\begin{equation}
\label{eq_8_6}
\|A^{(j),k}\|_{C^{0,\alpha}(\overline{\Omega}; S^j)}\le \frac{k!}{R^k}\sup_{|z|=R}\|A^{(j)}(\cdot, z)\|_{C^{0,\alpha}(\overline{\Omega}; S^j)},
\end{equation}
where $R>0$ and $k=1,2,\dots$. It follows from \eqref{eq_8_6} and \eqref{eq_8_1} that 
\begin{equation}
\label{eq_8_7}
\bigg\|A^{(j),k} \frac{u^k}{k!}\bigg\|_{C^{0,\alpha}(\overline{\Omega}; S^j)}\le \frac{C^k}{R^k}\|u\|^k_{C^{0,\alpha}(\overline{\Omega})}\sup_{|z|=R}\|A^{(j)}(\cdot, z)\|_{C^{0,\alpha}(\overline{\Omega}; S^j)}.
\end{equation}
Choosing $R=2C\|u\|_{C^{0,\alpha}(\overline{\Omega})}$, we see from \eqref{eq_8_7} that the series 
$\sum_{k=1}^\infty A^{(j),k}(x)\frac{u^k}{k!}$ converges in $C^{0,\alpha}(\overline{\Omega}; S^j)$, and hence, $A^{(j)}(\cdot, u(\cdot))\in C^{0,\alpha}(\overline{\Omega}; S^j)$, $j=1,2,3$. Similarly, we see that $q(\cdot, u(\cdot))\in C^{0,\alpha}(\overline{\Omega})$. Thus,  we conclude from \eqref{int_eq_1} that $L_{A^{(1)},A^{(2)},A^{(3)},q}u\in C^{0,\alpha}(\overline{\Omega})$.   

Let us next show that $F$, given by \eqref{eq_8_5}, is holomorphic. First, $F$ is locally bounded as it is continuous in $(f,g,u)$. Therefore, it suffices to check that $F$ is weakly holomorphic, see \cite[p. 133]{Poschel_Trubowitz_1987}. In doing so, letting $(f_1,g_1,u_1), (f_2,g_2,u_2)\in C^{4,\alpha}(\p \Omega)\times C^{3,\alpha}(\p \Omega)\times C^{4,\alpha}(\overline{\Omega})$, we have to check that the map 
\[
\lambda\mapsto F((f_1,g_1,u_1)+\lambda(f_2,g_2,u_2))
\]
is holomorphic in $\C$ with values in $C^{0,\alpha}(\overline{\Omega})\times C^{4,\alpha}(\p \Omega)\times C^{3,\alpha}(\p \Omega)$. To that end, it suffices to show that the map $\lambda\mapsto A^{(j)}(\cdot, u_1(\cdot)+\lambda u_2(\cdot))$ is holomorphic in $\C$ with values in $C^{0,\alpha}(\overline{\Omega}; S^j)$, $j=1,2,3$. The latter follows from the fact that the series 
 $\sum_{k=1}^\infty \frac{A^{(j),k}}{k!}(u_1+\lambda u_2)^k$ converges in $C^{0,\alpha}(\overline{\Omega}; S^j)$, locally uniformly in $\lambda\in \C$, see \eqref{eq_8_7}. 

Now we have $F(0,0,0)=0$ and the partial differential 
\begin{equation}
\label{eq_8_8}
\p_uF(0,0,0): C^{4,\alpha}(\overline{\Omega})\to C^{0,\alpha}(\overline{\Omega})\times C^{4,\alpha}(\p \Omega)\times C^{3,\alpha}(\p \Omega)
\end{equation}
is given by
\[
\p_uF(0,0,0) v=((-\Delta)^2 v, v|_{\p \Omega}, \p_\nu v|_{\p \Omega}). 
\]
Thus, it follows from \cite[Theorem 2.19]{Gazzola_Grunau_Sweers_book_2010}, see also \cite[Section 6.6]{Evans_book} , that the map \eqref{eq_8_8} is a linear isomorphism.

An application of the implicit function theorem, see  \cite[p. 144]{Poschel_Trubowitz_1987}, gives that there exists $\delta>0$ and a unique holomorphic map $S: B_\delta(\p \Omega)\to C^{4,\alpha}(\overline{ \Omega})$ such that $S(0,0)=0$ and $F(f,g,S(f,g))=0$ for all $(f,g)\in B_\delta(\p \Omega)$.  Setting $u=S(f,g)$ and noting that $S$ is Lipschitz continuous with $S(0,0)=0$,  we get 
\[
\|u\|_{C^{4,\alpha}(\overline{\Omega})}\le C(\|f\|_{C^{4,\alpha}(\p \Omega)}+\|g\|_{C^{3,\alpha}(\p \Omega)}).
\]
\end{proof}

\section{Proofs of injectivity results for generalized momentum ray transforms}
\label{sec_app_transforms}

The purpose of this appendix is to provide proofs of key injectivity results for generalized momentum ray transforms required to establish Theorem \ref{thm_density}. We will specifically present the proofs for Theorem \ref{thm_Bhattacharyya_Krishnan_Sahoo}, Lemma \ref{lem_inversion_1}, and Lemma \ref{lem_inversion_2}. These results are established in \cite[Theorem 4.18]{Bhattacharyya_Krishnan_Sahoo_2021}, \cite[Lemma 3.7]{Bhattacharyya_Krishnan_Sahoo_2021}, and \cite[Lemma 6.7]{Sahoo_Salo_2023}, considering tensor fields of arbitrary order $m\in \N$. In this appendix, we shall present the proofs given in those works, making them more direct when tailored to the case of $m$--tensor fields where $m=0,1,2, 3$, as needed in this paper. We will also give special consideration to continuous tensor fields with compact support. This appendix is included for the completeness and convenience of the reader only.

\subsection{Proof of Theorem \ref{thm_Bhattacharyya_Krishnan_Sahoo}}

\label{sec_app_transforms_1}
We shall follow the proof of  \cite[Theorem 4.18]{Bhattacharyya_Krishnan_Sahoo_2021}.  Let first $m=3$ and let us assume that $J^{3,3}F(x,\xi)=0$, for all $(x,\xi)\in \R^n\times \mathbb{S}^{n-1}$. Then  $J^{3,3}F(x,-\xi)=0$, for all $(x,\xi)\in \R^n\times \mathbb{S}^{n-1}$. Using that $J^{3,3}F(x,\xi)\pm J^{3,3}F(x,-\xi) =0$ and \eqref{eq_201_1}, we get 
\begin{equation}
\label{eq_201_3}
J^{1,3}f^{(1)}(x,\xi)+ J^{3,3}f^{(3)}(x,\xi)=0, \quad J^{0,3}f^{(0)}(x,\xi)+ J^{2,3}f^{(2)}(x,\xi)=0,
\end{equation}
for all $(x,\xi)\in \R^n\times \mathbb{S}^{n-1}$. Using \eqref{eq_201_2}, we obtain from \eqref{eq_201_3} that 
\begin{equation}
\label{eq_201_4}
J^{3,3}(i_\delta f^{(1)}+f^{(3)})(x,\xi)=0,
\end{equation}
\begin{equation}
\label{eq_201_5}
J^{2,3}(i_\delta f^{(0)}+f^{(2)})(x,\xi)=0,
\end{equation}
for all $(x,\xi)\in \R^n\times \mathbb{S}^{n-1}$.  Using \eqref{eq_201_0}, we get from \eqref{eq_201_4} that 
$I^{3,3}(i_\delta f^{(1)}+f^{(3)})(x,\xi)=0$,
for all $(x,\xi)\in \R^n\times (\R^n\setminus\{0\})$, and therefore, in $\mathcal{D}'(\R^n\times \mathbb{S}^{n-1})$. Hence, Theorem \ref{thm_higher_order_on} implies that  $i_\delta f^{(1)}+f^{(3)}=0$. 

Applying the operator $\xi\cdot\p_x$ to \eqref{eq_201_5} in the dense of distributions, and using \eqref{eq_6_4}, we obtain that $J^{2,2}(i_\delta f^{(0)}+f^{(2)})=0$ in $\mathcal{D}'(\R^n\times\mathbb{S}^{n-1})$. As $J^{2,2}(i_\delta f^{(0)}+f^{(2)})\in C(\R^n\times\mathbb{S}^{n-1})$, we have $J^{2,2}(i_\delta f^{(0)}+f^{(2)})(x,\xi)=0$ for all $(x,\xi)\in \R^n\times\mathbb{S}^{n-1}$. In view of   \eqref{eq_201_0}, we get $I^{2,2}(i_\delta f^{(0)}+f^{(2)})(x,\xi)=0$ for all $(x,\xi)\in \R^n\times (\R^n\setminus\{0\})$. Hence, Theorem \ref{thm_higher_order_on} implies that  $i_\delta f^{(0)}+f^{(2)}=0$. This establishes the implication in (i) in one direction. The proof of the converse follows by a direct computation.  The proof of (ii)-(iv) is similar. This completes the proof of Theorem \ref{thm_Bhattacharyya_Krishnan_Sahoo}.

\subsection{Proof of Lemma \ref{lem_inversion_1}}

\label{sec_app_transforms_2}
We shall follow the proof of \cite[Lemma 3.7]{Bhattacharyya_Krishnan_Sahoo_2021}. 
First, assume that $m=3$. We have the following decomposition,  
\begin{equation}
\label{eq_5_5}
f^{(3)}=g^{(3)}+i_\delta v^{(1)}, 
\end{equation}
where $v^{(1)}\in C_0(\R^{n-1}; S^{1}(\R^n))$ and $g^{(3)}\in C_0(\R^{n-1}; S^{3}(\R^n))$ is such that  
\begin{equation}
\label{eq_5_5_1}
j_\delta g^{(3)}=0,
\end{equation}
see \cite[Lemma 2.3]{Dairbekov_Sharafutdinov_2011}. Indeed, applying $j_\delta$ to \eqref{eq_5_5} and using \eqref{eq_5_5_1} and \eqref{eq_5_3_1}, we get $v^{(1)}=\frac{3}{n+2}j_\delta f^{(3)}\in C_0(\R^{n-1}; S^{1}(\R^n))$, and 
therefore, $g^{(3)}=f^{(3)}-\frac{3}{n+2}i_\delta j_\delta f^{(3)}\in C_0(\R^{n-1}; S^{3}(\R^n))$. The condition \eqref{eq_5_5_1} follows.

A direct computation using \eqref{eq_5_3}, \eqref{eq_5_2}, and the fact that $(e_1+i\eta)\cdot (e_1+i\eta)=0$ gives that $i_\delta v^{(1)}$ is in the kernel of  \eqref{eq_5_4}. Thus, substituting \eqref{eq_5_5} in \eqref{eq_5_4}, we obtain that
\begin{equation}
\label{eq_5_6}
\sum_{i_1,i_2, i_3=1}^n \int_{\R} t^3 g^{(3)}_{i_1 i_2 i_3}( y_0'+t\eta')(e_1+i\eta)_{i_1}(e_1+i\eta)_{i_2}(e_1+i\eta)_{i_3}dt=0,
\end{equation}
for  all $y_0'\in \R^{n-1}$ and all  $\eta\in \R^n$  such that $e_1\cdot\eta =0$ and $|\eta|=1$.

We shall proceed to show that $g^{(3)}=0$. First,  a direct computation using that $\eta_{1}=0$ gives 
\begin{equation}
\label{eq_5_7}
\begin{aligned}
\sum_{i_1,i_2, i_3=1}^n &g^{(3)}_{i_1 i_2 i_3}(e_1+i\eta)_{i_1}(e_1+i\eta)_{i_2}(e_1+i\eta)_{i_3}
=g^{(3)}_{111}+3 i \sum_{i_1=2}^n g^{(3)}_{i_1 11}\eta_{i_1}\\
 &+ 3i^2 \sum_{i_1,i_2=2}^n g^{(3)}_{i_1i_2 1}\eta_{i_1}\eta_{i_2}+i^3 \sum_{i_1,i_2, i_3=2}^n g^{(3)}_{i_1 i_2 i_3}\eta_{i_1}\eta_{i_2}\eta_{i_3}
=g^{(3)}_{111}+3 i \sum_{i_1=1}^{n-1} g^{(3)}_{i_1+1 11}\eta'_{i_1}\\
 &+ 3i^2 \sum_{i_1,i_2=1}^{n-1} g^{(3)}_{i_1+1i_2+1 1}\eta'_{i_1}\eta'_{i_2}+i^3 \sum_{i_1,i_2, i_3=1}^{n-1} g^{(3)}_{i_1+1 i_2+1 i_3+1}\eta'_{i_1}\eta'_{i_2}\eta'_{i_3}.
\end{aligned}
\end{equation}
Substituting \eqref{eq_5_7} in \eqref{eq_5_6}, we get that 
\begin{equation}
\label{eq_5_8}
\begin{aligned}
 \int_{\R} t^3\bigg( g^{(3)}_{111}( y_0'+t\eta') &+3 i \sum_{i_1=1}^{n-1} g^{(3)}_{i_1+1 11}( y_0'+t\eta')\eta'_{i_1}\\
&+ 3i^2 \sum_{i_1,i_2=1}^{n-1} g^{(3)}_{i_1+1i_2+1 1}( y_0'+t\eta')\eta'_{i_1}\eta'_{i_2}\\
 &+i^3 \sum_{i_1,i_2, i_3=1}^{n-1} g^{(3)}_{i_1+1 i_2+1 i_3+1}(y_0'+t\eta')\eta'_{i_1}\eta'_{i_2}\eta'_{i_3}\bigg)dt=0,
\end{aligned}
\end{equation}
for all $(y_0', \eta') \in \R^{n-1}\times \mathbb{S}^{n-2}$.   An application of Theorem \ref{thm_Bhattacharyya_Krishnan_Sahoo}, in the case of tensor fields in $C_0(\R^{n-1}; \mathcal{S}^3(\R^{n-1}))$, to \eqref{eq_5_8}, and a subsequent use of  \eqref{eq_5_3}, \eqref{eq_5_2}  give that for all $i_1,i_2, i_3=1,\dots, n-1$, 
\begin{equation}
\label{eq_5_9}
3i^2 g^{(3)}_{i_1+1i_2+1 1}= -(i_\delta g^{(3)}_{111})_{i_1i_2}=-\frac{1}{2} g^{(3)}_{111} (\delta_{i_1i_2}+\delta_{i_2i_1}),
\end{equation}
and
\begin{equation}
\label{eq_5_10}
i^3 g^{(3)}_{i_1+1 i_2+1 i_3+1}=-(i_\delta \tilde g^{(1)})_{i_1i_2i_3}, \quad (\tilde g^{(1)})_{i_1} =3i g^{(3)}_{i_1+1 11}.
\end{equation}
Now \eqref{eq_5_9} implies that 
\begin{equation}
\label{eq_5_11}
g_{j_1j_21}^{(3)}=0, \quad j_1\ne j_2, \quad j_1,j_2=2,\dots, n,
\end{equation}
and 
\begin{equation}
\label{eq_5_12}
3g^{(3)}_{j_1j_1 1}=g^{(3)}_{111}, \quad  j_1=2,\dots, n.
\end{equation}
It follows from \eqref{eq_5_5_1} that for $ j_1=1,\dots, n$,
\begin{equation}
\label{eq_5_13}
(j_\delta g^{(3)})_{j_1}=\sum_{l=1}^n g^{(3)}_{j_1 ll}= g^{(3)}_{j_1 11}+ \sum_{l=2}^n  g^{(3)}_{j_1 ll}=0.
\end{equation}
Now \eqref{eq_5_13} with $j_1=1$ and \eqref{eq_5_12} imply that 
\begin{equation}
\label{eq_5_14}
g^{(3)}_{j_1j_1 1}=0, \quad j_1=1,2,\dots, n. 
\end{equation}

It follows from \eqref{eq_5_10} that 
\begin{equation}
\label{eq_5_15}
g^{(3)}_{j_1j_2 j_3}=0, \quad j_1\ne j_2\ne j_3, \quad j_1,j_2,j_3=2,\dots, n,
\end{equation}
\begin{equation}
\label{eq_5_16}
g^{(3)}_{j_1j_1 j_1}=3 g^{(3)}_{j_1 11}, \quad  j_1=2,\dots, n,
\end{equation}
and 
\begin{equation}
\label{eq_5_17}
g_{j_1j_1 j_3}^{(3)}=g^{(3)}_{j_3 11}, \quad j_1\ne j_3,\quad  j_1, j_3=2,\dots, n.
\end{equation}
Combining \eqref{eq_5_13} with $j_1=j_3$ and \eqref{eq_5_17}, we get 
\begin{equation}
\label{eq_5_18}
\begin{aligned}
&g^{(3)}_{j_3 11}=0, \quad j_3=2,\dots, n,\\
&g^{(3)}_{j_1j_1 j_3}=0, \quad j_1\ne j_3,\quad  j_1, j_3=2,\dots, n.
\end{aligned}
\end{equation}
Thus, in view of \eqref{eq_5_18}, \eqref{eq_5_16} gives that 
\begin{equation}
\label{eq_5_19}
g^{(3)}_{j_1j_1 j_1}=0, \quad  j_1=2,\dots, n.
\end{equation}
We conclude from \eqref{eq_5_11},  \eqref{eq_5_14},  \eqref{eq_5_15}, \eqref{eq_5_18}, and \eqref{eq_5_19} that $g^{(3)}=0$. This completes the proof in the case $m=3$. 

Assume now that $m=2$. Similar to the case $m=3$, we have the following  decomposition 
$f^{(2)}=g^{(2)}+i_\delta v^{(0)}$, where $v^{(0)}= \frac{1}{n}j_\delta f^{(2)} \in C_0(\R^{n-1}, S^{0}(\R^n))$ and $g^{(2)}= f^{(2)}-\frac{1}{n}i_\delta j_\delta f^{(2)}\in  C_0(\R^{n-1}, S^{2}(\R^n))$ is such that 
\begin{equation}
\label{eq_5_5_1_tensor_2}
j_\delta g^{(2)}=0.
\end{equation}
We reduce the proof of the lemma to the following result:  if 
\begin{equation}
\label{eq_5_20}
\begin{aligned}
\int_{\R} t^2\bigg( g^{(2)}_{11}( y_0'+t\eta') &+2 i \sum_{i_1=1}^{n-1} g^{(2)}_{i_1+1 1}(y_0'+t\eta')\eta'_{i_1}\\
 &+ i^2 \sum_{i_1,i_2=1}^{n-1} g^{(2)}_{i_1+1i_2+1 }(y_0'+t\eta')\eta'_{i_1}\eta'_{i_2} \bigg)dt=0,
\end{aligned}
\end{equation}
for  all $(y_0', \eta')\in \R^{n-1}\times\mathbb{S}^{n-2}$,  show that $g^{(2)}=0$.  An application of Theorem \ref{thm_Bhattacharyya_Krishnan_Sahoo}, in the case of  $C_0(\R^{n-1}, \mathcal{S}^2(\R^{n-1}))$, to \eqref{eq_5_20}, and a subsequent use of  \eqref{eq_5_3}, \eqref{eq_5_2}  give that
\begin{equation}
\label{eq_5_21}
g^{(2)}_{j_1 1}=0, \quad j_1=2,\dots, n,
\end{equation}
\begin{equation}
\label{eq_5_22}
g^{(2)}_{j_1 j_2}=0, \quad j_1\ne j_2, \quad j_1, j_2=2,\dots, n,
\end{equation}
and 
\begin{equation}
\label{eq_5_23}
g^{(2)}_{j_1 j_1}=g^{(2)}_{11}, \quad  j_1=2,\dots, n.
\end{equation}
Now \eqref{eq_5_5_1_tensor_2} implies that 
\[
j_\delta g^{(2)}=g^{(2)}_{11}+\sum_{k=2}^n g^{(2)}_{kk}=0,
\]
and therefore, \eqref{eq_5_23} gives that 
\begin{equation}
\label{eq_5_24}
g^{(2)}_{j_1 j_1}=0, \quad  j_1=1,\dots, n.
\end{equation}
We conclude from \eqref{eq_5_21}, \eqref{eq_5_22}, and \eqref{eq_5_24} that $g^{(2)}=0$. This completes the proof of Lemma in the case $m=2$. 

Let $m=1$. Then \eqref{eq_5_4} can be written as 
\[
 \int_{\R} t \big(f^{(1)}_1( y_0'+t\eta') + \sum_{i_1=1}^{n-1} i f^{(1)}_{i_1+1}( y_0'+t\eta')\eta'_{i_1}\big) dt=0,
\]
for all  $(y_0', \eta') \in \R^{n-1}\times \mathbb{S}^{n-2}$. An application of   Theorem \ref{thm_Bhattacharyya_Krishnan_Sahoo} gives $f^{(1)}=0$. The proof of Lemma \ref{lem_inversion_1} is complete. 

\subsection{Proof of Lemma \ref{lem_inversion_2}}
\label{sec_app_transforms_3}

 First, we write 
\begin{equation}
\label{eq_5_26}
f^{(2)}=g^{(2)}+i_\delta v^{(0)},
\end{equation}
where 
\begin{align*}
v^{(0)}= \frac{1}{n}j_\delta f^{(2)} \in C_0(\R^{n-1}, S^{0}(\R^n)), \quad g^{(2)}= f^{(2)}-\frac{1}{n}i_\delta j_\delta f^{(2)}\in  C_0(\R^{n-1}, S^{2}(\R^n)),
\end{align*}
 so  that 
$j_\delta g^{(2)}=0$, see \cite[Lemma 2.3]{Dairbekov_Sharafutdinov_2011}.  Substituting \eqref{eq_5_26} in \eqref{eq_5_25}, and using  that $\eta_{1}=0$, we get 
\begin{equation}
\label{eq_5_27}
\begin{aligned}
\int_{\R} t^{2+k}\bigg(g^{(2)}_{11}(y_0'+t\eta')+&2i\sum_{i_1=1}^{n-1} g^{(2)}_{i_1+1 1}(y_0'+t\eta')\eta'_{i_1} \\
&+ i^2\sum_{i_1, i_2=1}^{n-1} g^{(2)}_{i_1+1 i_2+1}(y_0'+t\eta')\eta'_{i_1}\eta'_{i_2} \bigg)dt\\
&+ \int_{\R} t^{1+k}\bigg(f^{(1)}_{1}(y_0'+t\eta')+i\sum_{i_1=1}^{n-1} f^{(1)}_{i_1+1}(y_0'+t\eta')\eta'_{i_1} \bigg)dt=0,
\end{aligned}
\end{equation}
for all $\eta'\in \R^{n-1}$, $|\eta'|=1$, all $y_0'\in \R^{n-1}$, and $k=0,1$. 
Now \eqref{eq_5_27} can be written as follows, 
\begin{equation}
\label{eq_5_28}
J^{2,2+k} G(y_0',\eta')+ J^{1,k+1}F(y_0',\eta')=0,
\end{equation}
for all $(y_0',\eta')\in \R^{n-1}\times\mathbb{S}^{n-2}$. Here $G=\tilde g^{(0)}+\tilde g^{(1)}+\tilde g^{(2)}\in C_0(\R^{n-1},\mathcal{S}^2(\R^{n-1}))$, and  
$F=\tilde f^{(0)}+\tilde f^{(1)}\in C_0(\R^{n-1},\mathcal{S}^1(\R^{n-1}))$, where 
\begin{align*}
&\tilde g^{(0)}=g^{(2)}_{11}, \quad \tilde g^{(1)}_{i_1}= 2i g^{(2)}_{i_1+1 1}, \quad \tilde g^{(2)}_{i_1 i_2}= i^2g^{(2)}_{i_1+1 i_2+1},\\
&\tilde f^{(0)}= f^{(1)}_{1}, \quad \tilde f^{(1)}_{i_1}=i f^{(1)}_{i_1+1}, \quad 1\le i_1,i_2\le n-1. 
\end{align*}
Applying $\eta'\cdot\p_{y_0'}$ to \eqref{eq_5_28} in the sense of distributions, and using \eqref{eq_6_4}, we get 
\begin{equation}
\label{eq_5_29}
(k+2 )J^{2,1+k} G+ (k+1)J^{1,k}F=0 \quad \text{in}\quad \mathcal{D}'(\R^{n-1}\times\mathbb{S}^{n-2}). 
\end{equation}
Now since the left hand side of \eqref{eq_5_29} belongs to $C(\R^{n-1}\times \mathbb{S}^{n-2})$, \eqref{eq_5_29} implies that 
\begin{equation}
\label{eq_5_29_new_cont}
(k+2 )J^{2,1+k} G(y_0',\eta')+ (k+1)J^{1,k}F(y_0',\eta')=0, 
\end{equation}
for all $(y_0',\eta')\in \R^{n-1}\times\mathbb{S}^{n-2}$. 
Combining \eqref{eq_5_28} with $k=0$ and \eqref{eq_5_29_new_cont} with $k=1$, we obtain that 
\[
J^{2,2}G(y_0',\eta')=0, \quad J^{1,1}F(y_0',\eta')=0,
\]
for all $(y_0',\eta')\in \R^{n-1}\times\mathbb{S}^{n-2}$. 
Thus, Theorem \ref{thm_Bhattacharyya_Krishnan_Sahoo} implies that $f^{(1)}=0$. Now the equality $J^{2,2}G(y_0',\eta')=0$ is precisely the equality \eqref{eq_5_20} in the proof of Lemma \ref{lem_inversion_1}. Hence, in view of the fact that $j_\delta g^{(2)}=0$, we conclude that $g^{(2)}=0$. The proof of Lemma \ref{lem_inversion_2} is complete.

\end{appendix}

\section*{Acknowledgements}
The research of S.B. is partially supported by the grant: SRG/2022/00129 from the Science and Engineering Research Board, India.
The research of K.K. is partially supported by the National Science Foundation (DMS 2109199). The research of S. K. S is partly supported by the Academy of Finland (Centre of Excellence in Inverse Modelling and Imaging, grant 284715) and by the European Research Council under Horizon 2020 (ERC CoG 770924). The research of G.U. is partially supported by NSF, a Robert R. and Elaine F. Phelps Endowed Professorship at the University of Washington, a Si-Yuan Professorship at IAS, HKUST.

\end{document}